\documentclass[11pt]{amsart}

\usepackage{amsmath,amssymb,amsthm}
\usepackage{thmtools,enumerate}
\usepackage{mathtools}

\usepackage[german,english]{babel}
\usepackage[autostyle]{csquotes}

\usepackage[all]{xy}
\usepackage{pstricks}

\usepackage{tikz-cd}
\usetikzlibrary{babel}

\usepackage{tabularx}

\usepackage{hyperref}
\hypersetup{
	colorlinks=true,
	linkcolor=red,
	citecolor=blue}

\theoremstyle{plain}

\newtheorem{thm}{Theorem}[section]
\newtheorem{prop}[thm]{Proposition}
\newtheorem{lem}[thm]{Lemma}
\newtheorem{cor}[thm]{Corollary}

\newtheorem{thmA}{Theorem}

\newtheorem*{CC}{Cone Conjecture}
\newtheorem*{GenCC}{Generalised Cone Conjecture}

\theoremstyle{definition}

\newtheorem{dfn}[thm]{Definition}
\newtheorem{dfn-prop}[thm]{Definition-Proposition}
\newtheorem{rem}[thm]{Remark}

\newcommand{\Z}{\mathbb{Z}}
\newcommand{\N}{\mathbb{N}}
\newcommand{\Q}{\mathbb{Q}}
\newcommand{\R}{\mathbb{R}}
\newcommand{\C}{\mathbb{C}}
\newcommand{\pr}{\mathbb{P}}
\newcommand{\OO}{\mathcal{O}}
\newcommand{\cal}[1]{\mathcal{#1}}

\newcommand{\smperp}{{\scriptscriptstyle{\perp}}}
\newcommand{\smone}{{\scriptscriptstyle{(1)}}}
\newcommand{\smonestar}{{\scriptscriptstyle{(1)*}}}
\newcommand{\smonestarstar}{{\scriptscriptstyle{(1)**}}}

\DeclareMathOperator{\mult}{mult}
\DeclareMathOperator{\Exc}{Exc}
\DeclareMathOperator{\Pic}{Pic}
\DeclareMathOperator{\Eff}{Eff}

\DeclareMathOperator{\Nef}{Nef}
\DeclareMathOperator{\Amp}{Amp}

\DeclareMathOperator{\Comp}{Comp}
\DeclareMathOperator{\Cr}{Cr}
\DeclareMathOperator{\GL}{GL}
\DeclareMathOperator{\Aut}{Aut}
\DeclareMathOperator{\Int}{Int}
\DeclareMathOperator{\Supp}{Supp}

\DeclareMathOperator{\id}{Id}
\DeclareMathOperator{\nod}{nod}

\DeclareMathOperator{\Spec}{Spec}

\begin{document}
	
	\title[Generalised Cone Conjecture, I: Beyond Calabi--Yau]{Generalised Cone Conjecture, I:\\ Beyond Calabi--Yau}
 
	\author[V.\ Lazi\'c]{Vladimir Lazi\'c}
	\address{Fachrichtung Mathematik, Campus, Geb\"aude E2.4, Universit\"at des Saarlandes, 66123 Saarbr\"ucken, Germany}
	\email{lazic@math.uni-sb.de}

        \author[I.\ Stenger]{Isabel Stenger}
        \address{Institute of Algebraic Geometry, Leibniz University Hannover, Welfengarten 1, 30167 Hannover, Germany}
        \email{stenger@math.uni-hannover.de}
		
	\author[Z.\ Xie]{Zhixin Xie}
	\address{Institut \'Elie Cartan de Lorraine, Universit\'e de Lorraine, 54506 Nancy, France}
	\email{zhixin.xie@univ-lorraine.fr}
	
	\thanks{2020 \emph{Mathematics Subject Classification}: 14E30, 14J17, 14J26, 14J50, 14E07, 14C20. \newline
		\indent \emph{Keywords}: Cone Conjecture, Minimal Model Program, generalised pairs, rational surfaces, Cremona isometries.
	}
	
	\begin{abstract}
		This is the first of two papers in which we prove the Generalised Cone Conjecture on surfaces.
	\end{abstract}

	\maketitle
	
	\begingroup
		\hypersetup{linkcolor=black}
		\setcounter{tocdepth}{1}
		\tableofcontents
	\endgroup
	
\section{Introduction}

This is the first of two papers in which we prove the Generalised Cone Conjecture on surfaces. The second paper is \cite{LSX26}.

The Cone Conjecture has emerged in recent years as one of the most attractive open problems in birational geometry. In its classical form, it predicts that on a variety $X$ with mild singularities and with numerically trivial canonical class, the nef cone of $X$ -- which very often is itself not rational polyhedral -- is rational polyhedral up to the action of the automorphism group of $X$. This statement and its version involving the birational automorphism group have structural consequences in birational geometry: for instance, they imply that the number of minimal models of a smooth projective variety is finite up to abstract isomorphism. The Cone Conjecture is known in full generality only in dimension two.

It is known that this conjecture does not hold in its original form if the singularities of $X$ are bad enough. However, many pathological examples disappear if one works in the category of \emph{klt Calabi--Yau generalised pairs}, which we introduce below. This category includes, for instance, all smooth varieties with nef anticanonical class: these varieties are our main interest. We concentrate in this paper on generalised pairs of dimension two.

Our main results are:
\begin{enumerate}[\normalfont (i)]
\item on a \emph{very general} klt Calabi--Yau generalised pair of dimension two there exists a group of geometric origin such that there \textbf{does exist} a rational polyhedral fundamental domain for the action of that group on the nef cone,
\item there exists a klt Calabi--Yau generalised pair of dimension two such that for any group of geometric origin, there \textbf{does not exist} a rational polyhedral fundamental domain for the action of that group on the nef cone.
\end{enumerate}

We elaborate below in detail on why (i) is very surprising, and why -- as a consequence of (i) -- part (ii) is also very surprising. We explain the significance of our results in the context of broader literature and how they connect to several other works.

\medskip

\noindent{\sc Brief history of the Cone Conjecture}

\medskip

Before we continue, we fix some terminology. Let $V$ be a real vector space and let $\mathcal C\subseteq V$ be a convex cone. Let $G$ be a subgroup of $\GL(V)$ preserving $\mathcal C$. Then a \emph{fundamental domain} for the action of $G$ on $\mathcal C$ is a cone $\Pi\subseteq\mathcal C$ such that $\mathcal C=\bigcup_{g\in G}g(\Pi)$ and $\Int \Pi\cap \Int g(\Pi)=\emptyset$ unless $g$ is the identity.

If $X$ is a projective variety, the Cone Conjecture is a statement about the behaviour of the effective nef cone $\Nef^e(X) \coloneqq \Nef(X)\cap\Eff(X)$ under the action of the group $\Aut(X)$, and the behaviour of the effective movable cone $\operatorname{Mov}^e(X) \coloneqq \overline{\operatorname{Mov}}(X)\cap\Eff(X)$ under the action of the group $\operatorname{PsAut}(X)$ of birational automorphisms which are isomorphisms in codimension $1$. In its most general version formulated in \cite{Tot08}, the conjecture has the following form.

\begin{CC}
Let $(X,\Delta)$ be a projective $\Q$-factorial klt pair such that $K_X+\Delta\equiv 0$.
\begin{enumerate}[\normalfont (a)]
\item There exists a rational polyhedral fundamental domain for the action of $\Aut(X)$ on $\Nef^e(X)$.
\item There exists a rational polyhedral fundamental domain for the action of $\operatorname{PsAut}(X)$ on $\operatorname{Mov}^e(X)$.
\end{enumerate}
\end{CC}

If $\Delta=0$ and $X$ is smooth, this reduces to a more classical version due to Morrison and Kawamata \cite{Mor93,Kaw97}. A detailed account on the history of the conjecture and its importance in difference contexts can be found in \cite{LOP18}. The two versions (a) and (b) of the Cone Conjecture are equivalent on surfaces.

This conjecture is known in full generality only on surfaces by \cite{Tot10}, and there are several partial results in higher dimensions. It is also explained in \cite{Tot08} why the conjecture, as stated, does not hold if the singularities of the pair $(X,\Delta)$ are worse than klt, or if the divisor $\Delta$ is not effective.

\medskip

\noindent{\sc Main results}

\medskip

A couple $(X,B+M)$ is a \emph{generalised pair} if $(X,B)$ is a usual projective pair and $M$ is a nef $\R$-divisor on $X$. We say that it is \emph{Calabi--Yau} if, additionally, $X$ is $\Q$-factorial and $K_X+B+M\equiv 0$, and it is klt if the pair $(X,B)$ is klt. This includes all smooth varieties with nef anticanonical class: indeed, if we set $M\coloneqq{-}K_X$, then $(X,M)$ is a klt Calabi--Yau generalised pair. Here comes the point: the generalised pair $(X,M)$ is klt, even though $X$ itself does not have necessarily a structure of a klt pair in the usual sense.

As mentioned above, the Cone Conjecture does not hold for the class of klt Calabi--Yau generalised pairs. However, for a very general member of that class, we prove that it \textbf{does hold} when we replace the automorphism group $\Aut(X)$ by the group of Cremona isometries $\Cr(X)$ as in Definition \ref{dfn:cremona}.

The main result of this paper and of \cite{LSX26} is the following.

\begin{thmA}\label{thm:main0}
Let $(X,B+M)$ be a very general klt Calabi--Yau generalised pair of dimension $2$. Then there exists a rational polyhedral fundamental domain for the action of the group of Cremona isometries $\Cr(X)$ on $\Nef^e(X)$.
\end{thmA}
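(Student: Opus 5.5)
We will reduce the general statement to a statement about rational surfaces with nef anticanonical class, and then to a lattice-theoretic problem on the Picard lattice. First we run a $(K_X+B)$-MMP, or rather pass to a minimal resolution. Let $\pi\colon Y\to X$ be the minimal resolution and write $K_Y+B_Y+M_Y=\pi^*(K_X+B+M)\equiv 0$, with $M_Y=\pi^*M$ nef and $B_Y$ having coefficients in $[0,1)$. Then $-K_Y\equiv B_Y+M_Y$ is pseudoeffective. We split into the cases $\kappa(Y)\ge 0$ and $\kappa(Y)=-\infty$.

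\textbf{Case $\kappa(Y)\ge0$.} Here $B_Y$ and $M_Y$ are numerically trivial, so $X$ is a klt Calabi--Yau surface in the usual sense. The Cone Conjecture is then known by \cite{Tot10}, and one checks that $\Aut(X)$ acts through $\Cr(X)$ with finite kernel/cokernel issues that do not affect the existence of a rational polyhedral fundamental domain.

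\textbf{Case $\kappa(Y)=-\infty$.} If $Y$ is ruled over a curve of positive genus, then $\rho$ is small after contracting, and the nef cone should be rational polyhedral directly. This is because $-K$ pseudoeffective forces strong constraints on the fibration, so $\Cr(X)$ can be taken trivial. The main case is $Y$ rational with $-K_Y$ pseudoeffective. Very generality enters here: for $Y$ a blow-up of $\pr^2$ in points in very general position (subject to $-K_Y$ being effective/nef-like), the only negative curves are $(-1)$-curves together with the components forced by $B_Y$. Moreover, $\Cr(Y)$ is the full group of isometries of $\Pic(Y)$ fixing $K_Y$ and preserving the effective cone, which is the Weyl group $W(E_n)$ or a large finite index subgroup of it.

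The next step is to identify $\Nef^e(X)$ with a face $\pi^*\Nef^e(X)\subseteq\Nef^e(Y)$ cut out by the exceptional curves. We then show that $\Cr(X)$ is the stabiliser of that face in $\Cr(Y)$. On $Y$ we use the Coxeter structure: $W$ acts on the Tits cone, with fundamental chamber $\Pi$ given by the simple roots. The nef cone of $Y$ is the union of $W$-translates of $\Pi\cap\Nef(Y)$, and the chamber $\Pi\cap\Nef(Y)$ is rational polyhedral as long as it is cut out by finitely many $(-1)$-curve conditions. This is where we use the hypothesis $K_Y^2\ge0$ or the Looijenga-type result that the effective nef cone is contained in the Tits cone when $-K$ is effective. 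To get a fundamental domain for the stabiliser of the face, we intersect a Vinberg-type construction with the face. We will cite Looijenga's theorem on fundamental domains for arithmetic groups acting on convex cones, applied to the stabiliser subgroup, which is again arithmetic or a reflection group.

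\textbf{Main obstacle.} The hardest step is the case where $-K_Y$ is effective but not nef, or has $K_Y^2<0$, with a nonzero boundary. There the Tits cone may fail to contain $\Nef^e$, so polyhedrality of $\Pi\cap\Nef$ requires showing that only finitely many negative curves meet the chamber. I would first try to bound such curves via $-K_Y\cdot C\ge0$ and the decomposition $-K_Y\equiv B_Y+M_Y$. This gives $C\cdot M_Y\le -K_Y\cdot C$ and, using very generality, that $M_Y$ lies in the interior of a rational cone. I expect this finiteness argument, combined with a careful definition of which isometries are geometric, to be the real content.
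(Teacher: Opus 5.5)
Your outline has a genuine gap at its centre. The rational case carries the whole content of the theorem, and you neither set it up correctly nor prove it.

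First, ``very general'' cannot mean that $Y$ is the blow-up of $\pr^2$ at points in very general position. By Lemma \ref{lem:nonvanishing}(b) you have $\kappa(Y,-K_Y)\geq0$, which is impossible for ten or more points in very general position. So your reading covers essentially only del Pezzo surfaces and the nine-point case of \cite{DU24}.

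The missing idea is to split according to $\kappa(X,-K_X)$, rationality, and the Zariski decomposition $D'=N'+P'$ of $D'\sim_\Q-K_{X'}$ on the terminalisation $X'$ of $(X,B)$.
\begin{enumerate}[\normalfont (i)]
\item If $\kappa(X,-K_X)\geq1$, or if $\kappa(X,-K_X)=0$ and $P'=0$, then $X$ carries an honest klt Calabi--Yau pair structure. Totaro's theorem applies with \emph{no} genericity. For example, rational elliptic surfaces with reducible fibres are covered, although they are not blow-ups at very general points and carry $(-2)$-curves outside $\Supp B_Y$.
\item If $X$ is irrational and $B\neq0$ or $M\not\equiv0$, Theorem \ref{thm:nonrational} and Corollary \ref{cor:nonrational} show that $X$ itself is a ruled surface over an elliptic curve with two-dimensional nef cone. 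Your ``$\rho$ is small after contracting'' does not establish this.
\item If $\kappa(X,-K_X)=0$ and $P'\neq0$, Theorem \ref{thm:sak84} gives $\pi\colon X'\to X_0$ onto a Sakai surface, contracting only $(-1)$-curves inside $\Supp D'$. ``Very general'' then means precisely $\Delta^{\nod}_{X_0,D_0}=\emptyset$.
\end{enumerate}
This reduction disposes of what you flag as the main obstacle ($K_Y^2<0$, $-K_Y$ not nef): no chamber structure on $Y$ is ever needed.

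Second, your group is wrong. When $D_0$ is reducible, its components are $(-2)$-curves whose classes are roots of $E_9$, and reflections in them do not preserve $\Eff$. The relevant group $\Cr(X_0,D_0)$ is governed by the affine root system $R^\perp$, where $R$ is the type of $D_0$. It has infinite index in $W(E_9)$ whenever $D_0$ is reducible. Existence of a rational polyhedral fundamental domain for $\Cr(X_0,D_0)$ on a generic Sakai surface is Theorem \ref{thm:main2}, a substantial separate result. Your sketch merely assumes that the chamber is cut out by finitely many conditions. Theorem \ref{thm:main3} shows that for some non-generic Sakai surface no group preserving the intersection form and $\Eff$ has a fundamental domain at all, so this is exactly where the absence of internal $(-2)$-curves must enter.

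The transfer steps are also missing or misstated.
\begin{itemize}
\item \emph{Lifting from $X_0$ to $X'$.} One lifts $\rho'\in\Cr(X_0,D_0)$ to $\rho=\pi^*\circ\rho'\circ\pi_*+\id-\pi^*\circ\pi_*$. This preserves $\Eff(X')$ only for two reasons: $\overline{\Eff}(X')=\Eff(X')$ is the convex hull of $\overline{\mathcal C^+_{X'}}$, $\mathcal M_{X'}$, the components of $D'$ and the internal $(-2)$-classes (Lemma \ref{lem:rational_Mori_cone}); and $\Exc(\pi)\subseteq\Supp D'$ (Theorem \ref{thm:cremonaiso}). The finiteness on $X'$ comes from Han--Li's result that nef classes $K_{X'}+\Delta+\mu P'$, with boundary ranging over a rational polytope, form a rational polytope (Theorem \ref{thm:liftingoneblowup}). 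Bounding $C\cdot M_Y$ does not give this.
\item \emph{Descending from $X'$ to $X$.} Restricting to the face $f^*\Nef(X)$ is the right idea. However, your claim that $\Cr(X)$ is its stabiliser is unproved and unnecessary. What is needed is a group on $X'$ fixing every $f$-exceptional curve. Step 1 of Theorem \ref{thm:descent} shows all these curves lie in $\Supp D'$, using $D'=B'+M'$, $\Delta^{\nod}_{X',D'}=\emptyset$ and that the coefficients of $B'$ are $<1$. Hence $\Cr(X',D')$ preserves the face, polyhedral type restricts to it, and $f_*\circ\sigma\circ f^*\in\Cr(X,D)$.
\item \emph{Enlarging to $\Cr(X)$.} This has nothing to do with finite kernels or cokernels. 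A subgroup of $\Cr(X)$ acting with polyhedral type forces $\Cr(X)$ to act with polyhedral type. Theorem \ref{thm:Looijenga} then gives a domain on $\Nef^+(X)$, which you still have to compare with $\Nef^e(X)$ via generalised nonvanishing (Theorem \ref{thm:nefandeffectivecones}). This point is also all that is missing in your case $\kappa(Y)\geq0$.
\end{itemize}
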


In Theorems \ref{thm:main1} and \ref{thm:main2} below we spell out explicitly what \emph{very general} means in the formulation of Theorem \ref{thm:main0}. Theorem \ref{thm:main0} should be understood as a shorthand for the union of Theorems \ref{thm:main1} and \ref{thm:main2}.

We start with Theorem \ref{thm:main1}, which is the main technical result of the current paper. Note that \emph{Sakai surfaces}, which are obtained by blowing up $\mathbb{P}^2$ at nine points and which appear in many of our results, are defined in Definition \ref{dfn:goodHalphen}. The term \emph{generic Sakai surfaces} is defined in Definition \ref{dfn:generic_Sakai}: by Proposition \ref{prop:noNodalOnGen1}, these surfaces are very general Sakai surfaces in the sense of moduli theory.

\begin{thmA}\label{thm:main1}
Let $(X,B+M)$ be a klt Calabi--Yau generalised pair of dimension $2$. Then $\kappa(X,{-}K_X)\geq0$, and the following statements hold.
\begin{enumerate}[\normalfont (a)]
\item If $\kappa(X,{-}K_X)>0$ or if $X$ is not rational, then there exists a rational polyhedral fundamental domain for the action of the group $\Aut(X)$ on $\Nef^e(X)$.
\item Assume that $\kappa(X,{-}K_X)=0$ and $X$ is rational. Let $D$ be the unique effective $\Q$-divisor such that $D\sim_\Q{-}K_X$. Then one of the following holds.
\begin{enumerate}
\item[$\mathrm{(b)}_1$] There exists a rational polyhedral fundamental domain for the action of the group $\Aut(X)$ on $\Nef^e(X)$.
\item[$\mathrm{(b)}_2$] There exists a birational map $X\dashrightarrow X_0$ to a Sakai surface $X_0$.
\end{enumerate}
\item In the setup of $\mathrm{(b)}_2$, assume additionally that $X_0$ is a generic Sakai surface. Let $D_0$ be the unique element of the linear system $|{-}K_{X_0}|$. If there exists a rational polyhedral fundamental domain for the action of the group $\Cr(X_0,D_0)$ on $\Nef^e(X_0)$, then there exists a rational polyhedral fundamental domain for the action of the group $\Cr(X,D)$ on $\Nef^e(X)$.\footnote{The group $\Cr(X,D)$ is defined in Definition \ref{dfn:cremona}.}
\end{enumerate}
\end{thmA}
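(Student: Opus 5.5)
We follow the structure of the statement: first establish $\kappa(X,{-}K_X)\geq0$, then treat the cases $\kappa>0$ or $X$ non-rational, and finally the rational case with $\kappa=0$.

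\emph{Non-negativity of $\kappa$.} Since $B\geq0$ and ${-}K_X\equiv B+M$ with $M$ nef, ${-}K_X$ is pseudoeffective. On a surface, the Zariski decomposition reduces the question to the nef part. If $X$ is rational, Riemann--Roch gives $h^0(-mK_X)>0$ for suitable $m$, because $\chi(\OO_X)=1$ and $K_X^2\geq0$ on the nef part; if $K_X^2=0$ we use the Harbourne-type argument for anticanonical rational surfaces. If $X$ is not rational, the classification of surfaces with pseudoeffective anticanonical class applies: ${-}K_X$ pseudoeffective forces $X$ to be ruled over an elliptic curve, or $K_X\equiv0$. Abundance-type results for numerically trivial or elliptic ruled cases then give $\kappa\geq0$, using that $(X,B)$ is klt, so we may pass to the minimal resolution and argue there.

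\emph{Part (a).} If $X$ is not rational, the cone $\Nef^e(X)$ is either rational polyhedral (ruled over an elliptic curve, Picard-number considerations) or $X$ is a klt Calabi--Yau surface in the usual sense. In the latter case Totaro's theorem \cite{Tot10} applies. If $X$ is rational and $\kappa(X,{-}K_X)\geq1$, we take $D\in|{-}mK_X|$ and distinguish two subcases. If $\kappa=2$, $X$ is a Mori dream space-like surface with finitely many negative curves, so the nef cone is itself rational polyhedral. If $\kappa=1$, we use the elliptic (or rational) fibration induced by the nef part of ${-}K_X$ and argue as in Totaro's treatment of rational elliptic surfaces, via the Mordell--Weil group acting on the fibres' Néron--Severi lattice, to obtain a fundamental domain.

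\emph{Part (b).} Let $D\sim_\Q{-}K_X$ be the unique effective divisor. We run a $K_X$-MMP, or more precisely contract $(-1)$-curves disjoint from, or meeting, $\Supp D$ in a controlled way, until we reach a minimal model $Y$ with $-K_Y$ nef. If $K_Y^2>0$ the anticanonical class is big, contradicting $\kappa=0$, so $K_Y^2=0$ and $-K_Y$ is nef with $\kappa=0$. If $Y$ is a blow-up of $\pr^2$ in nine points with $|{-}K_Y|\neq\emptyset$, this gives a Sakai surface and we are in $\mathrm{(b)}_2$. Otherwise the number of curves of negative self-intersection is finite, because the singularities and the finiteness of $(-2)$- and $(-1)$-curves are forced by the structure of $D$, and we obtain $\mathrm{(b)}_1$ by showing that $\Nef^e(X)$ is rational polyhedral.

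\emph{Part (c).} We compare the two groups through the birational map $\pi\colon X\dashrightarrow X_0$, factored as blow-ups and blow-downs of points on $D_0$. Genericity of $X_0$ (no nodal curves, by Proposition~\ref{prop:noNodalOnGen1}) should give a finite-index subgroup of $\Cr(X_0,D_0)$ that lifts to $\Cr(X,D)$ and fixes the extra exceptional classes. We then build the fundamental domain on $X$ from the one on $X_0$ by intersecting with the finitely many extra half-spaces coming from the exceptional curves, using finite index to take finitely many translates.

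\emph{Main obstacle.} I expect the hardest step to be this descent/lifting in (c), in particular controlling which Cremona isometries preserve the configuration of blown-up points on $D_0$.
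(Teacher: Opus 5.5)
Your proposal has genuine gaps in (b) and (c), and throughout you treat $X$ as smooth although it is only $\Q$-factorial klt. In (b) the dichotomy is misidentified, and your route to $\mathrm{(b)}_1$ is false. A $K_X$-MMP on a smooth rational surface ends at $\pr^2$ or some $\mathbb F_n$, where ${-}K$ is big. The controlled contraction of $(-1)$-curves you actually need is exactly Sakai's theorem (Theorem \ref{thm:sak84}), and it requires the positive part of the Zariski decomposition of $D$ to be nonzero. In the remaining case $\Nef^e(X)$ need not be rational polyhedral. For instance, take a general Coble surface $X$ (the blow-up of $\pr^2$ at the ten nodes of a rational sextic $C$) with $B=\frac12\widetilde C$ and $M=0$: then $\kappa(X,{-}K_X)=0$ and $P=0$, yet $X$ has infinitely many $(-1)$-curves.

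The paper instead passes to the terminalisation $f\colon X'\to X$, notes that $\kappa(X',{-}K_{X'})=0$, and writes $D'=N'+P'$. If $P'=0$, then $(X',D')$, and hence $(X,f_*D')=(X,D)$, are klt Calabi--Yau pairs, so $\mathrm{(b)}_1$ is Totaro's theorem for the $\Aut(X)$-action, not polyhedrality. If $P'\neq0$, Theorem \ref{thm:sak84} gives $\pi\colon X'\to X_0$ with $X_0$ Sakai, $\Exc(\pi)\subseteq\Supp D'$, $D'\in|{-}K_{X'}|$ connected, and $\Delta^{\nod}_{X',D'}=\emptyset$ if and only if $\Delta^{\nod}_{X_0,D_0}=\emptyset$. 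These properties of the specific map $X\xleftarrow{f}X'\xrightarrow{\pi}X_0$ are what (c) runs on; the bare existence of a birational map is trivial for rational $X$.

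Your non-rational case has a similar problem. ${-}K_X$ pseudoeffective does not force an elliptic ruled surface: on $\pr(\OO_T\oplus L)$ with $\deg L=-e$ over a curve $T$ of genus $g\geq2$ and $e\geq 2g-2$, one has ${-}K\equiv 2C_0+(2-2g+e)F$. You need the generalised pair structure, and you must rule out blow-ups of a $\pr^1$-bundle using that $B$ has coefficients $<1$, as in Theorem \ref{thm:nonrational}. The paper gets $\kappa\geq0$ directly from Han--Liu's nonvanishing.

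In (c) the key mechanisms are missing. First, you give no reason why a finite-index subgroup of $\Cr(X_0,D_0)$ lifts. In fact the whole group lifts via $\rho'\mapsto\pi^*\circ\rho'\circ\pi_*+\id-\pi^*\circ\pi_*$ (Theorem \ref{thm:cremonaiso}). This map fixes the exceptional classes automatically because $\Exc(\pi)\subseteq\Supp D'$. The real work is showing that the lift preserves $\Eff(X')$. This uses $\overline{\Eff}(X')=\Eff(X')$ (Theorem \ref{thm:nefandeffectivecones}, with $M=P'\not\equiv0$) together with Lemma \ref{lem:rational_Mori_cone}, by checking that $\overline{\mathcal C^+_{X'}}$, $\mathcal M_{X'}$, the components of $D'$ and the internal $(-2)$-curves are each preserved; the last of these uses connectedness of $D'$.

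Second, for $x=\pi^*x'-E$ with $E\geq0$ exceptional, nefness must be tested against every curve meeting $\Supp E$, not just the exceptional ones. So finitely many half-spaces from exceptional curves do not cut out $(\pi_*)^{-1}(\Pi')\cap\Nef(X')$. The paper shows this set lies in a rational polyhedral subcone of $\Nef(X')$, using Han--Li's rational polytope of nef classes $K_{X'}+\Delta+\mu P'$, and then applies Looijenga's theorem.

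Third, the descent from $X'$ to the singular $X$ (Theorem \ref{thm:descent}) is absent, and this is where genericity is really needed. The condition $\Delta^{\nod}_{X',D'}=\emptyset$ forces every $f$-exceptional curve $E'$ into $\Supp D'$: otherwise $E'$ is a $(-1)$-curve with $B'\cdot E'<1$, contradicting $(K_{X'}+B')\cdot E'=0$. Hence $\Cr(X',D')$ preserves the face $f^*\Nef(X)$ of $\Nef^+(X')$, and its action on this face is again of polyhedral type. Pushing forward by $\sigma'\mapsto f_*\circ\sigma'\circ f^*\in\Cr(X,D)$ then finishes the proof.
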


Thus, in order to prove Theorem \ref{thm:main0} by using Theorem \ref{thm:main1}, it remains to resolve Theorem \ref{thm:main1}(c). This is where Theorem \ref{thm:main2} comes into the picture: it is the main result of the sequel to the present paper \cite{LSX26}.

\begin{thmA}\label{thm:main2}
Let $X$ be a generic Sakai surface and let $D$ be the unique divisor in $|{-}K_X|$. Then there exists a rational polyhedral fundamental domain for the action of the group $\Cr(X,D)$ on $\Nef^e(X)$.
\end{thmA}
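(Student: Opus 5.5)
A generic Sakai surface $X$ is the blow-up of $\pr^2$ at nine points lying on a smooth cubic, or on a degenerate anticanonical curve. In the generic case no $(-2)$-curves exist, by Proposition \ref{prop:noNodalOnGen1}, and $D$ is the strict transform of the cubic. I would prove the theorem by identifying the action of $\Cr(X,D)$ on $N^1(X)$ with a reflection group, and then applying Looijenga's criterion for rational polyhedral fundamental domains.

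First, $N^1(X)\cong\Z^{1,9}$ is the standard Lorentzian lattice, and $K_X^\perp$ is the affine $E_9=\tilde E_8$ lattice. Because there are no nodal curves, $\Cr(X,D)$ should contain, or coincide with, the Weyl group $W(E_9)$ generated by reflections in the classes $e_i-e_j$ and $h-e_i-e_j-e_k$, possibly extended by finite symmetries of $D$. This is the classical description of Cremona isometries due to Dolgachev and Ortland. I would verify from Definition \ref{dfn:cremona} that each such simple reflection preserves the effective cone, the nef cone and the class of $D$.

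Second, I would determine the cones themselves. With no $(-2)$-curves, $\overline{\Eff}(X)$ is generated by $D$ and the infinitely many $(-1)$-curves, which form the $W(E_9)$-orbit of $e_9$. The nef cone is then cut out by $D\ge0$ and by $E\ge0$ for all $(-1)$-curves $E$. The Weyl chamber $\Pi$ of $W(E_9)$ inside the positive cone is bounded by the ten simple root hyperplanes, so it is polyhedral. However, it is not rational polyhedral with compact cross-section: the isotropic vector $-K_X$ is an ideal vertex of $\Pi$.

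So the candidate fundamental domain is $\Pi'\coloneqq\Pi\cap\Nef^e(X)$. There are two things to show:
\begin{enumerate}[(i)]
\item The $W$-translates of $\Pi'$ cover $\Nef^e(X)$. This uses the Tits cone property: every effective nef class has non-negative square, so it lies in the closure of the positive cone and hence in the Tits cone, except for multiples of $-K_X$, which are fixed by the group.
\item $\Pi'$ is rational polyhedral.
\end{enumerate}

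The main obstacle is (ii), near the boundary ray $\R_{\ge0}(-K_X)$. There the Weyl group is infinite parabolic, namely the affine $E_8$ stabiliser of $K_X$, and the chamber touches the null vector. I would try to show that $\Pi'$ equals the cone generated by $-K_X$ together with finitely many explicit rational vectors: the vertices of the $E_9$ Coxeter polytope, such as $h$, $h-e_1$, $2h-e_1-\dots-e_4$, and so on. Here $-K_X$ is effective because $D$ exists, so including it keeps the cone rational and inside $\Nef^e(X)$.

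A cleaner route is Looijenga's theorem: it suffices to find a rational polyhedral cone $\Sigma\subseteq\Nef^e(X)$ whose $\Cr(X,D)$-translates cover $\Nef^+(X)$. For this I would use the translation subgroup of $W(E_9)$ fixing $K_X$, which acts on $K_X^\perp/\Z K_X$ as the $E_8$ lattice, cocompactly near the isotropic ray. Taking $\Sigma$ to be the cone over a fundamental parallelotope for these translations, together with the finite Weyl chamber away from the cusp, gives the required domain.

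The degenerate cases of $D$ (nodal or cuspidal cubic, conic plus line, and so on) should be handled the same way. The lattice-theoretic computation is unchanged; only the identification of $\Cr(X,D)$ with the reflection group needs rechecking, using the explicit definition of genericity.
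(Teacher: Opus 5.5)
Theorem~\ref{thm:main2} is not proved in this paper: it is deferred to \cite{LSX26}, which relies on root systems and Weyl groups. So your overall plan has the right shape: a reflection group inside $\Cr(X,D)$, its chamber intersected with $\Nef(X)$, then Theorem~\ref{thm:Looijenga}. As written, though, it only works for the three types with irreducible $D$, namely $A_0^{(1)}$, $A_0^{(1)*}$ and $A_0^{(1)**}$. The cases you wave off at the end are where the real content lies.

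Genericity (Definition~\ref{dfn:generic_Sakai}) only excludes \emph{internal} $(-2)$-curves. For every other type, $D$ is reducible, and each component $D_i$ is a smooth rational curve with $D_i^2=-2$ and $K_X\cdot D_i=0$. That is, $[D_i]$ is a real root of $E_9$ in $K_X^\perp$. The reflection $s_{[D_i]}\in W(E_9)$ sends $[D_i]$ to $-[D_i]\notin\Eff(X)$, and any reflection in a root $\alpha$ with $\alpha\cdot D_i\neq0$ moves $[D_i]$. Hence $W(E_9)\not\subseteq\Cr(X)$, let alone $\Cr(X,D)$. Likewise, $\Eff(X)$ is not generated by $[D]$ and the $(-1)$-curves, since the $[D_i]$ span extremal rays.

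Worse, for $R\in\{A_8^{(1)},D_8^{(1)},E_8^{(1)}\}$ the classes $[D_i]$ span $K_X^\perp$. An isometry fixing $K_X^\perp$ pointwise is the identity, so $\Cr(X,D)$ is trivial. The theorem then asserts that $\Nef(X)$ itself is rational polyhedral, and your $E_9$ computation says nothing about that. So the claim that ``the lattice-theoretic computation is unchanged'' is false.

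In general the right group comes from the orthogonal complement of the $[D_i]$ in $K_X^\perp$; it is essentially Sakai's affine Weyl group $W(R^\perp)$, of smaller rank. Two ingredients are then missing.

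First, you must show that this group lies in $\Cr(X,D)$. This is where genericity enters. Since $\Delta^{\nod}_{X,D}=\emptyset$, Lemma~\ref{lem:rational_Mori_cone} and Theorem~\ref{thm:nefandeffectivecones}(b) show that any lattice isometry fixing $K_X$, fixing every $[D_i]$ and preserving $\mathcal C_X^+$ also preserves $\Eff(X)$. Here $\mathcal M_X$ consists of all classes $x$ with $x^2=x\cdot K_X=-1$: by Riemann--Roch $h^0(x)+h^0(K_X-x)\geq1$, and $K_X-x$ is not effective because $(K_X-x)\cdot(-K_X)=-1$ while $-K_X$ is nef.

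Second, and this is the heart of the matter, you must show that the chamber of this smaller group intersected with $\Nef(X)$ is rational polyhedral. That chamber is cut out by the simple roots of $R^\perp$, by the $[D_i]$, and by infinitely many $(-1)$-curves falling into several orbits. So you need a finiteness argument showing that only finitely many $(-1)$-curves give walls.

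By contrast, the obstacle you single out in (ii) is not an obstacle in the irreducible case. Suppose $x\cdot\alpha_i\ge0$ for the nine (not ten) simple roots. Then for every $w\in W(E_9)$, $w^{-1}(x)-x$ is a non-negative combination of the $\alpha_i$, and $\alpha_i\cdot e_9\ge0$, so $x\cdot w(e_9)\ge x\cdot e_9$. Hence $\Pi\cap\Nef(X)=\{x : x\cdot\alpha_i\ge0,\ x\cdot e_9\ge0\}$ is simplicial. It is spanned by the dual basis $h$, $h-e_1$, $2h-e_1-e_2$ and $3h-e_1-\dots-e_k$ for $3\le k\le9$, the last of these being $-K_X$. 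No cusp or translation argument is needed there.
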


Finally, contrary to what one might expect after seeing Theorem \ref{thm:main2}, there exist Sakai surfaces where no variant of the Cone Conjecture with respect to any meaningful group of geometric origin holds. This is the content of the following Theorem \ref{thm:main3}, which is likewise proved in \cite{LSX26}.

\begin{thmA}\label{thm:main3}
There exists a non-generic Sakai surface $X$ such that the following holds. Let $G\subseteq\GL\big(N^1(X)_\R\big)$ be any group which preserves the intersection form on $N^1(X)_\R$ and the cone $\Eff(X)$. Then there does not exist a fundamental domain for the action of $G$ on $\Nef^e(X)$.
\end{thmA}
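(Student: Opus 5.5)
The plan is to reduce the statement to a Looijenga-type obstruction: exhibit a class that is nef and effective but does not lie in the rational hull
\[
\Nef^+(X)\coloneqq\operatorname{Conv}\big(\Nef(X)\cap N^1(X)_\Q\big).
\]
I read \enquote{fundamental domain} here as \emph{rational polyhedral} fundamental domain, the notion used throughout the paper. With the bare definition from the introduction the statement would fail trivially: for the trivial group $G$, the cone $\Pi=\Nef^e(X)$ itself is a fundamental domain. The argument then has two parts. Part (1) is a rigidity statement about any admissible $G$. Part (2) is the construction of a special Sakai surface $X$ with $\Nef^e(X)\not\subseteq\Nef^+(X)$.

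\emph{Step (1): every admissible $G$ acts rationally.} Let $g\in G$. Since $g$ preserves $\Eff(X)$, it permutes the extremal rays of $\Eff(X)$. On a Sakai surface these include the rays spanned by the $(-1)$-curves and the $(-2)$-curves. Because $g$ preserves the intersection form, it sends the class of a $(-1)$-curve $E$ to the unique generator $E'$ of the image ray with $E'^2=-1$; that generator is again an integral class. The $(-1)$-curves span $N^1(X)_\R$, since $X$ is a blow-up of $\pr^2$ in nine points. Hence $g$ maps an integral spanning set to integral classes, so $g\in\GL\big(N^1(X)_\Q\big)$. By duality $g$ also preserves $\Nef(X)$, and therefore $\Nef^+(X)$.

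Now suppose $\Pi$ is a rational polyhedral fundamental domain. Then $\Pi\subseteq\Nef^+(X)$, since its generators are rational nef classes. By Step (1) each $g(\Pi)$ is again rational polyhedral, so
\[
\Nef^e(X)=\bigcup_{g\in G}g(\Pi)\subseteq\Nef^+(X).
\]
It therefore suffices to find a Sakai surface carrying an effective nef class $x$ that spans an irrational extremal ray of $\Nef(X)$. Such an $x$ cannot lie in $\Nef^+(X)$: a positive combination of rational nef classes equal to $x$ would have to consist of classes on the extremal ray of $x$, which is irrational.

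\emph{Step (2): construction.} I would take nine points on a plane cubic $C$ in special position, so that $D=\widetilde C\in|{-}K_X|$ is reducible or non-reduced, for instance a cycle or a chain of rational curves. The aim is for the intersection matrix of the components of $D$, together with suitable $(-2)$-curves in $K_X^\perp$, to contain a hyperbolic sublattice of rank two. In that situation the isotropic directions of this sublattice are irrational, and the curves there have negative self-intersection. I would then seek an effective $\R$-divisor $x=\sum a_iC_i$, supported on these curves, with real coefficients chosen so that $x\cdot C_i=0$ for the negative curves involved and $x\cdot C\geq0$ for all remaining curves. A Perron--Frobenius type argument on the dual graph should give $x$ nef, effective and irrational, and it should span an extremal ray of $\Nef(X)$ because $x^\perp$ contains the rank-two hyperbolic lattice.

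The main obstacle is Step (2). On a Sakai surface the components of $D$ form an affine Dynkin configuration with $-K_X$ isotropic and rational, which tends to make irrational extremal rays impossible. One therefore has to leave $K_X^\perp$, using $(-1)$- and $(-2)$-curves meeting $D$ non-trivially, and choose the points so that this configuration has exactly the needed negative curves and no further negative curves destroying nefness. The check that $x$ is nef against \emph{every} curve, in particular against the infinitely many $(-1)$-curves, is where I expect the real work to lie. I would attempt it by first computing the finitely many Cremona-orbit types of $(-1)$-curves, using the Weyl group of $K_X^\perp$.
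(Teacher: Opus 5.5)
Your reading of ``fundamental domain'' as \emph{rational polyhedral} is the intended one, and the reduction after Step~(1) is logically sound. The problem is that Step~(2) asks for an object that cannot exist. For any Sakai surface $X$, the couple $(X,0+M)$ with $M={-}K_X$ is a klt Calabi--Yau generalised pair with $M\not\equiv0$. So Theorem~\ref{thm:nefandeffectivecones}(b) gives $\Nef(X)=\Nef^e(X)=\Nef^+(X)$. By your own remark, every extremal ray of $\Nef(X)$ is therefore rational, and $\Nef^e(X)\subseteq\Nef^+(X)$ always holds; no special position of the nine points changes this.

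Even your intermediate goal fails. The components of $D$ and all $(-2)$-curves lie in $K_X^\perp$, which is negative semidefinite since $K_X^2=0$, so they span no hyperbolic sublattice. The equality $\Nef=\Nef^e=\Nef^+$ is exactly what makes Theorem~\ref{thm:Looijenga} applicable: a rational polyhedral fundamental domain exists if and only if the action is of polyhedral type. So what you must rule out is a polyhedral cone $\Pi\subseteq\Nef(X)$ with $\Amp(X)\subseteq G\cdot\Pi$; the obstruction is not irrationality.

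Step~(1) also has a hole. A priori $g$ could send the class of a $(-1)$-curve $E$ to $[F]/\sqrt2$ for a $(-2)$-curve $F$. Proposition~\ref{prop:groups} rules this out, using that $g$ fixes the ray of $[{-}K_X]$, and it gives $G\subseteq\Cr(X)$ as soon as $\Eff(X)$ is not rational polyhedral.

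The paper defers the proof to \cite{LSX26}, where it uses root systems and Weyl groups, so I cannot compare routes directly. The missing idea is that the obstruction sits at the isotropic ray $\R_{\geq0}[{-}K_X]$, which all of $G$ fixes, and that non-genericity enters through the internal $(-2)$-curves. One way to make this precise is as follows.

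Let $S$ be the finite set of classes of $(-2)$-curves: the components of $D=\sum n_jD_j$ and the internal ones. The internal ones inject into the roots of the definite lattice $K_X^\perp/\Z K_X$, which gives finiteness. By Steps 3--4 of the proof of Proposition~\ref{prop:groups}, every $g\in G$ permutes $S$. Since $g$ also fixes $[K_X]$, it acts by isometries on the Euclidean affine space $\{x\cdot({-}K_X)=1\}/\R[K_X]$ and preserves $\Phi(x)=\sum_{C\in S}(C\cdot x)^2$.

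The image of a polyhedral $\Pi\subseteq\Nef(X)$ in this slice is compact, because a nef class orthogonal to $K_X$ is a multiple of ${-}K_X$. Hence $\Phi$ is bounded on $G\cdot\Pi$.

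Now let $C_1,\dots,C_k$ with $k\geq1$ be the internal $(-2)$-curves. Their Gram matrix is negative definite; otherwise $\kappa(X,{-}K_X)=0$ would be violated. So there is $v\in\sum_i\R[C_i]$ with $v\cdot C_i=1$ for all $i$, and automatically $v\cdot D_j=0$. Take $h$ ample with $h\cdot({-}K_X)=1$, $s\geq0$, and $t$ large depending on $s$. Then $h+sv+t[{-}K_X]$ is ample: for a $(-1)$-class $E$ and $z=h+sv$ one has $(z-E)^2\leq0$, so $z\cdot E\geq(z^2-1)/2$, and Lemma~\ref{lem:rational_Mori_cone} finishes the check. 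Yet $\Phi\geq\sum_i(h\cdot C_i+s)^2\to\infty$ along this family.

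Taking $G$ trivial, the same argument shows $\Eff(X)$ is not rational polyhedral, so Proposition~\ref{prop:groups} does apply. For generic $X$ only the components of $D$ remain in $S$, and $0\leq D_j\cdot x\leq 1/n_j$ bounds $\Phi$ on all of $\Nef(X)$. This is consistent with Theorem~\ref{thm:main2}.
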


A more general version of Theorem \ref{thm:main3} will be proved in \cite{LSX26a}.

\medskip

\noindent{\sc Generalised Cone Conjecture}

\medskip

In light of this paper and of \cite{LSX26,LSX26a}, we formulate the following conjecture.

\begin{GenCC}
Let $(X,B+M)$ be a very general klt Calabi--Yau generalised pair. Then there exist subgroups $G_{\mathrm{nef}}$ and $G_{\mathrm{mov}}$ of $\GL\big(N^1(X)_\R\big)$ of geometric origin such that the following statements hold.
\begin{enumerate}[\normalfont (a)]
\item There exists a rational polyhedral fundamental domain for the action of $G_{\mathrm{nef}}$ on $\Nef^e(X)$.
\item There exists a rational polyhedral fundamental domain for the action of $G_{\mathrm{mov}}$ on $\operatorname{Mov}^e(X)$.
\end{enumerate}
\end{GenCC}

The minimal requirements for the term \emph{group of geometric origin} is that it preserves the intersection pairing and the respective cone, as in Proposition \ref{prop:groups}. Our Theorem \ref{thm:main0} proves the Generalised Cone Conjecture in dimension two.

\medskip

\noindent{\sc Related results}

\medskip

There are certain situations where a statement similar to that of the Cone Conjecture is known to hold. 

First, we recall that the nef cone of a Fano manifold $X$ is rational polyhedral and is contained in the effective cone, hence by \cite[Corollary 6.6]{Laz13} there exists a rational polyhedral fundamental domain for the action of $\Aut(X)$ on $\Nef(X)$. In other words, the Cone Conjecture holds on Fano varieties.

Second, under the weaker assumption that on a manifold $X$ the divisor $-K_X$ is semiample, there exists a klt Calabi--Yau pair structure on $X$. Then the Cone Conjecture is verified in certain cases, see for instance \cite{PS12}.

Third, the paper \cite{SX23} proves a version of the movable Cone Conjecture on the blowup $X$ of $\mathbb{P}^3$ at eight very general points: there exist a subgroup $\mathcal W\subseteq\GL\big(N^1(X)_\R\big)$ of geometric origin and a rational polyhedral fundamental domain for the action of $\mathcal W$ on $\operatorname{Mov}^e(X)$. Here, ${-}K_X$ is a nef divisor, but $X$ does not admit the structure of a klt Calabi--Yau pair, and the movable Cone Conjecture in the classical sense does not hold by \cite{Gac25}.

Finally, the study of \emph{Looijenga pairs}, which was initiated in \cite{Loo81} and fully developed in \cite{GHK15,Fri13,Fri15}, was a further source of inspiration for our work. A Looijenga pair $(X,D)$ consists of a smooth rational surface $X$ and a divisor $D\in|{-}K_X|$, where $D$ is either a rational nodal curve or a cycle of smooth rational curves. In \cite{Li25}, the following version of the Cone Conjecture was verified: given a very general Looijenga pair $(X,D)$, there exists a rational polyhedral fundamental domain for the action of a certain monodromy group on the effective nef cone. For a very general Looijenga pair $(X,D)$, this group coincides with the group $\Cr(X,D)$. 

\medskip

\noindent{\sc Relation to the Generalised Nonvanishing Conjecture}

\medskip

Our Generalised Cone Conjecture gives a strong structural prediction: for varieties as in the conjecture, the cones $\Nef^e(X)$ and $\operatorname{Mov}^e(X)$ are rational polyhedral up to the action of natural groups acting on them. This, together with the results of \cite{Loo14}, implies that every nef line bundle on $X$ belongs to the effective cone. This prediction is known as the Generalised Nonvanishing Conjecture \cite{HanLiu20,LP20a}, and was shown in \cite{HanLiu20} on surfaces and almost completed on threefolds in \cite{LMPTX23}.

\medskip

\noindent{\sc Why are our results surprising?}

\medskip

We now explain why the results of this paper and of its sequel \cite{LSX26} have previously been unexpected.

Recall the following example from \cite{Tot10}. If $X$ is the blow up of $\mathbb{P}^2$ at nine very general points, then $M\coloneqq{-}K_X$ is nef but not semiample. The surface $X$ does not admit a structure of a klt Calabi--Yau pair, but $(X,M)$ is a klt Calabi--Yau generalised pair. As $X$ contains infinitely many $(-1)$-curves by \cite[Theorem 4a]{Nag61}, the cone $\overline{\Eff}(X)$ is not rational polyhedral. By the duality given by the intersection product, the cone $\Nef(X)$ is also not rational polyhedral, hence neither is $\Nef^e(X)$. However, the group $\Aut(X)$ is trivial by \cite[Proposition 8]{Giz80}. Thus, the classical Cone Conjecture fails as badly as possible in this example, but \cite{DU24} showed that it nevertheless holds after replacing the group $\Aut(X)$ by the group of Cremona isometries $\Cr(X)$. This is a special case of our main result.

As mentioned above, similar behaviour was demonstrated in \cite{SX23} on the blowup of $\mathbb{P}^3$ at eight very general points.

In Theorem \ref{thm:main0} it is surprising that there exists \emph{any other group}, apart from the automorphism group, whose action on the nef cone has a rational polyhedral fundamental domain, not only in the class of surfaces with nef anticanonical class, but even in a more general context.

After Theorem \ref{thm:main0} is established, it is then quite surprising that its statement does not hold for \emph{all} klt Calabi--Yau generalised pairs of dimension $2$: it is very much not obvious why special $({-}2)$-curves would cause arguments to break down. The results \cite[Theorem 5.1]{GHK15}, \cite[Remark 9.2]{Fri15} and \cite[Theorem 1.1(2)]{Li25} hint at the existence of an appropriate group also in the case of non-generic Sakai surfaces. Therefore, finding such a counterexample in Theorem \ref{thm:main3} was very much unexpected for us.

\medskip

\noindent{\sc What is in this paper}

\medskip

We explain briefly the content of the paper.

This paper contains the proof of Theorem \ref{thm:main1}; the proofs of Theorems \ref{thm:main2} and \ref{thm:main3} are given in the sequel to this paper \cite{LSX26}. We also prove lifting and descent results, Theorems \ref{thm:liftingoneblowup} and \ref{thm:descent}, which are interesting in their own right and which form a big part of the proof of Theorem \ref{thm:main1}. In this paper we treat Theorem \ref{thm:main2} as a black box and we prove Theorems \ref{thm:main0} and \ref{thm:main1} assuming Theorem \ref{thm:main2}.

Along the way we prove several results which are highly relevant in their own right. In Theorem \ref{thm:nefandeffectivecones} we prove an essential result on the relation between the cones $\Nef(X)$ and $\Nef^e(X)$ on a klt Calabi--Yau generalised pair of dimension two. Corollary \ref{cor:nonrational} gives a concrete characterisation of smooth Calabi--Yau generalised pairs of dimension two which are not rational.

\medskip

\noindent{\sc What is in \cite{LSX26} and \cite{LSX26a}}

\medskip

The sequel to this paper \cite{LSX26} proves Theorems \ref{thm:main2} and \ref{thm:main3}. The methods of proofs of those two theorems are completely different from those of the present paper. Whereas in this paper we mainly use techniques from birational geometry, \cite{LSX26} relies on the machinery of root systems and Weyl groups. 

In \cite{LSX26a} we study in detail the case of non-generic Sakai surfaces. In particular, we show that Theorem \ref{thm:main3} is not a sporadic example, but part of a pattern which holds for Sakai surfaces of almost every type.

\subsection*{Declaration}

All the results in this paper were established by the three authors: mathematics is a human endeavour.

\subsection*{Acknowledgements} 

We would like to thank C\'ecile Gachet and Wendelin Lutz for many useful discussions and comments over several years on the content of the present paper. We are grateful to Pierre-Emmanuel Chaput and Julia Schneider for many valuable discussions. This project started in 2023 when all three authors were based at the Universit\"at des Saarlandes.

Lazi\'c gratefully acknowledges support by the Deutsche Forschungsgemeinschaft (DFG, German Research Foundation) – Project-ID 286237555 – TRR 195 and Project-ID 530132094. The research of Stenger was partly conducted in the framework of the DFG-funded research training group RTG 2965: From Geometry to Numbers, Project number 512730679.
Xie was partly supported by PEPS JCJC de l'Insmi 2025.

\section{Preliminaries}\label{sec:prelim}

Throughout the paper, unless otherwise stated, we work with varieties that are normal, projective and defined over $\C$.

We frequently use the Negativity lemma \cite[Lemma 3.39]{KM98}.

	The \emph{Iitaka dimension} of a $\Q$-Cartier divisor $D$ on a normal variety $X$ is denoted by $\kappa(X,D)$. If $D$ is also nef, the \emph{numerical dimension} of $D$ is 
	$$\nu(X,D)\coloneqq\sup\{k\in\N\mid D^k\not\equiv0\}\in\{0,\dots,\dim X\} . $$
	It is well known that $\kappa(X,D)\leq\nu(X,D)$, and that $D$ is big if and only if $\nu(X,D)=\dim X$. We will not need other properties of the numerical dimension in this paper.
		
If $X$ is a normal projective variety, we denote by $N^1(X)_\R$, $\Nef(X)$, $\Amp(X)$, $\Eff(X)$ and $\overline{\Eff}(X)$ the N\'eron-Severi space and the nef, ample, effective and pseudoeffective cone of $X$, respectively. If $D$ is an $\R$-Cartier $\R$-divisor on $X$, then we denote its numerical class in $N^1(X)_\R$ by $[D]$. We call the cone
$$\Nef^e(X)\coloneqq\Nef(X)\cap\Eff(X)$$
the \emph{effective nef cone}. The \emph{rational nef cone} $\Nef^+(X)$ is the cone in $N^1(X)_\R$ spanned by the classes of nef line bundles. The cones $\Nef^e(X)$ and $\Nef^+(X)$ are the most important cones related to the (Generalised) Cone Conjecture. The relationships between them are explored in Section \ref{sec:CY2}.

	Let $D$ be an $ \R $-Cartier $\R$-divisor on a normal projective variety $X$. Following \cite{LP20a,LP20b}, we say that $D$ is \emph{num-effective} if $[D]\in\Eff(X)$, and that $D$ is \emph{num-semiample} if $D$ is numerically equivalent to a semiample $\R$-divisor on $X$.

\subsection{Pullbacks and pushforwards}

If $f\colon X\to X'$ is a birational morphism between normal projective varieties, then it is clear that $f^*\Eff(X)\subseteq\Eff(X')$ and $f^*\Nef(X')\subseteq\Nef(X)$. Considering pushforwards is much more subtle. What is not clear is that one may define the pushforward map compatible with the numerical equivalence. However, this holds if $X'$ is $\Q$-factorial. We will use the following easy lemma often in the paper.

\begin{lem}\label{lem:pushforward}
Let $f\colon X\to X'$ be a birational morphism between $\Q$-factorial projective varieties. Let $E_1,\dots,E_k$ be all the $f$-exceptional prime divisors on $X$, and set $V\coloneqq\sum_{i=1}^k \R [E_i]\subseteq N^1(X)_\R$. Then the following statements hold.
\begin{enumerate}[\normalfont (a)]
\item Let $D$ be a $\Q$-divisor on $X$. Then $\kappa(X,D)\leq\kappa(X',f_*D)$.
\item There exists a well-defined map $f_*\colon N^1(X)_\R\to N^1(X')_\R$, and we have $f_*\Eff(X)\subseteq\Eff(X')$.
\item Then $ N^1(X)_\R = f^* N^1(X')_\R \oplus V $ and $V=\bigoplus_{i=1}^k \R [E_i]$. If $x\in N^1(X)_\R$, then there exists $e\in V$ such that $x=f^*f_*x+e$.
\end{enumerate}
Assume, furthermore, that $X$ and $X'$ are surfaces.
\begin{enumerate}[\normalfont (d)]
\item If $N$ is a nef $\R$-divisor on $X$, then $f_*N$ is a nef $\R$-divisor on $X'$, and consequently $f_*\Nef(X)\subseteq\Nef(X')$.
\item[\normalfont{(e)}] For any $e\in V$ we have $e^2\leq0$, with equality if and only if $e=0$.
\item[\normalfont{(f)}] If $N$ is a nef $\R$-divisor on $X$, then there exists an $f$-exceptional $\R$-divisor $E\geq0$ on $X$ such that $N+E=f^*f_*N$.
\item[\normalfont{(g)}] If $D$ is an $\R$-divisor on $X$ such that $D^2=(f_*D)^2$, then $D=f^*f_*D$.
\end{enumerate}
\end{lem}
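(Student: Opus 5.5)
The plan is to prove the parts in the order (b), (c), (a), then (d)--(g). The general statements use only $\Q$-factoriality of $X'$, and the surface statements use the Hodge index theorem together with the Negativity lemma.

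For (b), I define $f_*$ on divisors as the usual proper pushforward. Since $X'$ is $\Q$-factorial, $f_*D$ is $\R$-Cartier, so the real content is compatibility with numerical equivalence. Suppose $D\equiv0$ on $X$. Then $f^*f_*D-D$ is $f$-exceptional and numerically trivial on every $f$-exceptional curve, because $D\equiv 0$ and $f^*$ kills contracted curves. Applying the Negativity lemma to it and to its negative gives $f^*f_*D=D$, hence $f^*f_*D\equiv0$. For any curve $C'\subseteq X'$, take a curve $C\subseteq X$ mapping onto $C'$; the projection formula gives $f_*D\cdot C'=\frac{1}{\deg}\,f^*f_*D\cdot C=0$. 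So $f_*$ descends to $N^1$, and it clearly sends effective classes to effective classes.

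For (c), I observe that $x-f^*f_*x$ is represented by an $f$-exceptional divisor, which gives $N^1(X)_\R=f^*N^1(X')_\R+V$. For directness, suppose $f^*y=\sum a_iE_i$. Applying $f_*$ gives $y=0$, using $f_*f^*=\mathrm{id}$. Linear independence of the $[E_i]$ follows from the Negativity lemma: if $\sum a_iE_i\equiv0$, then both it and its negative are $f$-nef and $f$-exceptional, so both are effective, hence all $a_i=0$.

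For (a), I use that for $s\in H^0(X,mD)$ the pushforward $f_*\mathrm{div}(s)+mf_*D$ is effective. The injection $H^0(X,mD)\hookrightarrow H^0(X',mf_*D)$ comes from $X\setminus \Exc(f)\cong$ an open subset of $X'$ whose complement has codimension at least $2$, using normality of $X'$.

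Now assume $X$ and $X'$ are surfaces. For (d), let $C'$ be a curve on $X'$ with strict transform $C$. Then $f_*N\cdot C'=f^*f_*N\cdot C$. Writing $N=f^*f_*N-E$ with $E$ exceptional as in (c), we get $E\cdot E_i=-N\cdot E_i\le0$ for every $i$. Thus $-E$ is $f$-nef, and the Negativity lemma gives $E\ge0$; this already proves (f). It follows that $f_*N\cdot C'=N\cdot C+E\cdot C\ge0$, since $C$ is not exceptional. For (e), I use the negative definiteness of the intersection matrix of exceptional curves, which is Mumford/Grauert; alternatively, it follows from the Negativity lemma applied to $\pm$ the positive and negative parts. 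For (g), write $D=f^*f_*D+e$ with $e\in V$ as in (c). Since $f^*f_*D\cdot e=0$, we get $D^2=(f_*D)^2+e^2$. The hypothesis then forces $e^2=0$, so $e=0$ by (e).

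I expect the main obstacle to be the well-definedness step in (b), especially making the Negativity lemma argument clean when $X$ is merely $\Q$-factorial rather than smooth. The other steps are routine once (b) and (c) are in place.
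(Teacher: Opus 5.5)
Your proposal is correct and follows the paper's proof part by part. The differences are minor: you prove (a) and (b) directly where the paper cites \cite[Lemma 2.8]{LP18a} and \cite[Lemma 3.4]{LMPTX23}, you deduce (d) from (f) where the paper uses the projection formula, and you quote Mumford--Grauert negative definiteness for (e) where the paper runs the Hodge index argument on a resolution. The one step to make explicit is in (g): (e) only yields $[e]=0$, and to conclude $D=f^*f_*D$ as $\R$-divisors you still need the linear independence of the $[E_i]$ from (c), as the paper does.
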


\begin{proof}
Most of these statements are well known, and we recall the proofs for the benefit of the reader.

Part (a) follows from \cite[Lemma 2.8]{LP18a}. Part (d) follows from Kleiman's criterion for nefness and the projection formula. Part (f) is an immediate consequence of the Negativity lemma.

For (b), let $D_1$ and $D_2$ be two $\R$-Cartier $\R$-divisors on $X$ such that $D_1\equiv D_2$. Then \cite[Lemma 3.4]{LMPTX23}, applied to the divisor $D_1-D_2$, shows that $f_*D_1\equiv f_*D_2$. This shows that we may define the map $f_*\colon N^1(X)_\R\to N^1(X')_\R$. The inclusion $f_*\Eff(X)\subseteq\Eff(X')$ is then obvious.

Now we prove (c). If $D$ be an $\R$-divisor on $X$, then clearly there exists an exceptional $\R$-divisor $F$ such that $D=f^*f_*D+F$. This shows the last statement in (c), and that $N^1(X)_\R=f^* N^1(X')_\R+V$. Now, if for some $x'\in N^1(X')_\R$ and some real numbers $\lambda_i$ we have $f^*x'+\sum_{i=1}^k\lambda_i[E_i]=0$, then by pushing forward by $f_*$ we immediately get that $x'=0$. But then $\lambda_i=0$ for all $i$ by the Negativity lemma. This completes the proof of (c).

For (e), by passing to a resolution we may assume that $X$ is smooth. If $h$ is an ample class on $X'$, then $f^*(h)\cdot e = 0$ and $(f^*h)^2>0$, hence the claim follows from the Hodge index theorem.

Finally, we show (g). There exists an $\R$-divisor $E$ on $X$ such that $D=f^*f_*D+E$. Then
$$D^2=(f^*f_*D+E)^2=(f^*f_*D)^2+2f^*f_*D\cdot E+E^2=D^2+E^2,$$
hence $[E]=0$ by (e). But then $E=0$ by (c).
\end{proof}

\subsection{Zariski decomposition}\label{subsec:Zariski}

Let $X$ be a smooth projective surface. Then for every pseudoeffective $\R$-divisor $D$ on $X$ there exists a unique decomposition $D=N+P$ into a \emph{negative} part $N$ and \emph{positive} part $P$ such that $P$ and $N$ are $\R$-divisors, $P$ is nef, the intersection of $P$ with any component of $N$ is zero, and the intersection matrix of the components of $N$ is negative definite. Moreover, if $D\geq0$, then also $P\geq0$.

\begin{rem}\label{rem:zariski}
When $D$ above is a $\Q$-divisor, then the existence of the Zariski decomposition is classical \cite{Zar62,Fuj79}. When $D$ is an $\R$-divisor, then the situation is more complicated. There are two approaches: an algebraic approach by Nakayama in \cite{Nak04}, and an analytic approach by Boucksom in \cite{Bou04}, which coincide on projective manifolds. Nakayama shows in \cite[remarks on p.~96]{Nak04} that his approach generalises Zariski's construction for $\Q$-divisors on smooth projective surfaces. Boucksom's shows in \cite[Proposition 3.6 and Theorems 4.5 and 4.8]{Bou04} that his approach generalises Zariski's, and satisfies the properties stated before this remark.
\end{rem}

When $X$ is only $\Q$-factorial and $D$ is a pseudoeffective $\R$-divisor on $X$, let $\pi\colon X'\to X$ be a resolution of $X$, and let $\pi^*D=N'+P'$ be the Zariski decomposition of $\pi^*D$. We define the \emph{Zariski decomposition $D=N+P$ of $D$} by setting $N\coloneqq\pi_*N'$ and $P\coloneqq\pi_*P'$. Then we have the following lemma.

\begin{lem}\label{lem:Zariski}
Let $X$ be a $\Q$-factorial projective surface, let $D$ be a pseudoeffective $\R$-divisor on $X$, and let $D=N+P$ be the Zariski decomposition of $D$. Then:
\begin{enumerate}[\normalfont (a)]
\item $N$ is an effective $\R$-divisor and $P$ is a nef $\R$-divisor,
\item $N$ and $P$ do not depend on the choice of a resolution of $X$,
\item if $D=B+M$, where $B$ is an effective $\R$-divisor and and $M$ is a nef $\R$-divisor, then $B\geq N$,
\item if $D\geq0$, then also $P\geq0$,
\item if, moreover, $D$ is a $\Q$-divisor, then $N$ and $P$ are also $\Q$-divisors, and we have $\kappa(X,D)=\kappa(X,P)$.
\end{enumerate}
\end{lem}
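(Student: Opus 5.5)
The plan is to work throughout on a resolution $\pi\colon X'\to X$ and transport the statements to $X$ with Lemma \ref{lem:pushforward}. By definition $N=\pi_*N'$ and $P=\pi_*P'$. Since $N'\geq0$, we get $N\geq0$. Nefness of $P$ follows from Lemma \ref{lem:pushforward}(d) applied to the nef divisor $P'$. This gives (a).

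For (b), I would compare two resolutions through a common resolution. So it suffices to consider a further birational morphism $g\colon X''\to X'$ between smooth surfaces and to show that the Zariski decomposition of $g^*\pi^*D$ equals $g^*N'+g^*P'$. The divisor $g^*P'$ is nef. The components of $g^*N'$ are the strict transforms of components of $N'$ together with some $g$-exceptional curves. Using the projection formula and Lemma \ref{lem:pushforward}(c),(e), their intersection matrix is negative definite, and $g^*P'$ is orthogonal to each of them. Uniqueness of the Zariski decomposition then gives the claim, and pushing forward gives independence. This step is mildly fiddly, because strict transforms of components of $N'$ plus exceptional curves must be handled; I would instead use the characterisation of $N'$ as the minimal effective $F$ with $\pi^*D-F$ nef, which is compatible with pullback.

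For (c), write $\pi^*D=\pi^*B+\pi^*M$, with $\pi^*B\geq0$ and $\pi^*M$ nef. The key input is the standard minimality of the negative part on smooth surfaces: if $\pi^*D=F+Q$ with $F\geq0$ and $Q$ nef, then $F\geq N'$. I would prove this by writing $F-N'=A-C$ with $A,C\geq0$ having no common components, with $\operatorname{Supp}C\subseteq\operatorname{Supp}N'$. For each component $C_i$ of $C$ we have $C_i\cdot(Q-P')\geq0$, since $Q$ is nef and $P'\cdot C_i=0$. This forces $C\cdot(A-C)\leq0$ and hence $-C^2\leq -A\cdot C\leq0$. Negative definiteness then gives $C=0$. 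Pushing forward $\pi^*B\geq N'$ yields $B\geq N$.

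For (d), $D\geq0$ implies $\pi^*D\geq0$, so $P'\geq0$ by the smooth case, and hence $P\geq0$.

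For (e), rationality of $N'$ and $P'$ is classical for $\Q$-divisors \cite{Zar62,Fuj79}, so $N$ and $P$ are $\Q$-divisors. For the Iitaka dimension, $\kappa(X,D)=\kappa(X',\pi^*D)$. On the smooth surface, every effective $G\sim_\Q m\pi^*D$ satisfies $G\geq mN'$ by (c) applied to $G+0$. Hence $\kappa(X',\pi^*D)=\kappa(X',P')$. Finally $\kappa(X',P')\leq\kappa(X,P)$ by Lemma \ref{lem:pushforward}(a). For the reverse inequality, $\pi^*P=P'+E$ with $E$ exceptional, and $E\geq0$ by Lemma \ref{lem:pushforward}(f) since $P'$ is nef. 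Thus $\kappa(X,P)=\kappa(X',\pi^*P)$. Since also $\pi^*P=\pi^*D-\pi^*N\leq\pi^*D$, we get $\kappa(X,P)\leq\kappa(X,D)$, which closes the chain.

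The main obstacle is (b) (and the minimality argument in (c)), which I handle by the minimality characterisation.
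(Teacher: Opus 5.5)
Your proof is correct, and its structure matches the paper's. You work on a resolution $\pi\colon X'\to X$, use the smooth decomposition there, and push forward with Lemma \ref{lem:pushforward}. Parts (a) and (d) are argued identically, and (e) runs the same chain $\kappa(X,D)=\kappa(X',\pi^*D)=\kappa(X',P')\leq\kappa(X,P)\leq\kappa(X,D)$.

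The difference is in the three nontrivial inputs. The paper cites them:
\begin{itemize}
\item independence in (b) from the compatibility of Nakayama's decomposition with birational pushforwards \cite[Lemma 2.13]{LX25};
\item the inequality $\pi^*B\geq N'$ in (c) from \cite[Proposition III.1.14(b)]{Nak04};
\item the equality $\kappa(X',P')=\kappa(X',\pi^*D)$ in (e) from \cite[Proposition 2.3.21]{Laz04}.
\end{itemize}
You prove them directly. For (b) you check that $g^*N'+g^*P'$ satisfies the defining properties after a further blowup $g$, then pass to a common resolution. For (c) you use the standard negative-definiteness argument, which is correct as written. For (e) you deduce $|m\pi^*D|=mN'+|mP'|$ from that minimality. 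The paper's version is shorter and stays inside Nakayama's framework, which treats $\R$-divisors uniformly. Yours is self-contained and uses only existence, uniqueness and rationality of the Zariski decomposition on smooth surfaces.

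Two small points of presentation. First, the direct check in (b) already closes the argument, so you do not need the minimality route. If you do use it, let $g^*\pi^*D=N''+P''$ be the decomposition upstairs. Minimality alone only gives $N''\leq g^*N'$. The reverse needs Lemma \ref{lem:pushforward}(f) applied to the nef divisor $P''$, which gives $N''\geq g^*g_*N''$. Combine this with $g_*N''\geq N'$, which holds because $\pi^*D-g_*N''=g_*P''$ is nef.

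Second, in (e), ``(c) applied to $G+0$'' should be read as minimality applied to $m\pi^*D=G+(m\pi^*D-G)$, together with the fact that the negative part of $m\pi^*D$ is $mN'$. The second summand is numerically trivial, hence nef. Also, the remark that $E\geq0$ plays no role: $\kappa(X,P)\leq\kappa(X,D)$ follows directly from $P\leq D$.
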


\begin{proof}
Parts (a) and (d) are clear by the discussion before the lemma, and by Lemma \ref{lem:pushforward}(d). Part (b) follows since Nakayama's decomposition mentioned in Remark \ref{rem:zariski} is compatible with pushforwards under birational morphisms by \cite[Lemma 2.13]{LX25}.

For (c), let $\pi\colon X'\to X$ be a resolution of $X$ and let $\pi^*D=N'+P'$ be the Zariski decomposition of $\pi^*D$. Since $\pi^*D=\pi^*B+\pi^*M$, we have $\pi^*B\geq N'$ by \cite[Proposition III.1.14(b)]{Nak04}. Then we obtain (c) by applying $\pi_*$ to this last inequality.

Finally, we show (e). The rationality of $N$ and $P$ follows from the constructions in \cite{Zar62,Fuj79}. Now, since $D=N+P$ and $N\geq0$, we have $\kappa(X,D)\geq\kappa(X,P)$. For the reverse inequality,  let $\pi\colon X'\to X$ be a resolution of $X$. Then we observe that by Lemma \ref{lem:pushforward}(a) and \cite[Proposition 2.3.21]{Laz04} we have $\kappa(X,P)\geq\kappa(X',P')=\kappa(X,\pi^*D)=\kappa(X,D)$. This completes the proof.
\end{proof}

\subsection{Curves with negative self-intersection}

In this paper, we need a precise description of curves with negative self-intersection on a projective surface $X$. We start with the following lemma.

\begin{lem}\label{lem:negativeexceptional}
Let $X$ be a smooth projective surface and let $C$ be an irreducible curve on $X$ such that $C^2 <0$. If $C'$ is an effective $\R$-divisor on $X$ such that $C\equiv C'$, then $C=C'$.
\end{lem}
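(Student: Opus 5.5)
We write $C'=aC+F$, where $a\geq0$ is the coefficient of $C$ in $C'$ and $F\geq0$ is an effective $\R$-divisor none of whose components is $C$. The plan is to show first that $a=1$ and then that $F=0$.

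\emph{Step 1: $a\geq1$.} Since $C\not\subseteq\Supp F$, we have $C\cdot F\geq0$. Hence
$$C^2=C\cdot C'=aC^2+C\cdot F\geq aC^2.$$
Because $C^2<0$, this forces $(1-a)C^2\geq0$, that is, $a\geq1$.

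\emph{Step 2: $a\leq1$.} We have $(1-a)C\equiv F$. If $a>1$, then $-F\equiv(a-1)C$ is numerically equivalent to a nonzero effective divisor. Intersecting with an ample divisor $H$ gives $H\cdot F=(1-a)\,H\cdot C<0$. This is impossible, since $F$ is effective and so $H\cdot F\geq0$. Hence $a=1$.

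\emph{Step 3: $F=0$.} Now $F\equiv0$ with $F\geq0$. Then $H\cdot F=0$ for an ample $H$, and since every component of $F$ has positive degree against $H$, all coefficients of $F$ vanish. Therefore $C'=C$.

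The only delicate point is the sign bookkeeping in Step 1: we must isolate the coefficient of $C$ itself, so that the remaining divisor $F$ meets $C$ nonnegatively. Everything else is the standard fact that a nonzero effective divisor has positive degree against an ample class. Smoothness of $X$ is used only so that intersection numbers of $\R$-divisors are defined; $\Q$-factoriality would suffice.
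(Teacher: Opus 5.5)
Your proof is correct, and it takes a different route from the paper. The paper gives no direct computation. It cites \cite[Theorem 4.5]{Bou04} to see that $C$, having negative self-intersection, is an exceptional divisor in Boucksom's sense. It then cites \cite[Proposition 3.13]{Bou04}, which says that an exceptional effective divisor is the only effective representative of its class.

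You instead isolate the coefficient $a$ of $C$ in $C'$. Using $C\cdot F\geq0$ and $C^2<0$ you get $a\geq1$. Testing $F\equiv(1-a)C$ against an ample class then forces $a=1$ and $F=0$.

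Your argument is self-contained and works verbatim on $\Q$-factorial projective surfaces. It uses only bilinearity of the intersection pairing and positivity of effective divisors against ample classes. It also stays entirely within numerical equivalence, whereas the citation passes through Boucksom's analytic theory of divisorial Zariski decompositions.

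What the paper's citation gains is generality you do not need here. Boucksom's result covers arbitrary exceptional effective divisors, i.e.\ those whose components have negative definite intersection matrix, not just a single irreducible curve. For the statement at hand, which concerns one irreducible curve, your argument loses nothing.
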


\begin{proof}
The curve $C$ is exceptional in the sense \cite[Definition 3.10]{Bou04} by \cite[Theorem 4.5]{Bou04}. Then we conclude by \cite[Proposition 3.13]{Bou04}.
\end{proof}

The proof of the next result is similar to that of \cite[Lemma 1.2]{AM04}.

\begin{lem}\label{lem:negativeselfintersection}
    Let $X$ be a smooth projective surface, let $B\geq0$ be an $\R$-divisor on $X$ such that ${-}(K_X+B)$ is nef, and let $C$ be an irreducible curve on $C$ which is not a component of $B$ and such that $C^2<0$. Then the following statements hold.
	\begin{enumerate}[\normalfont (a)]
	\item We have $C^2\geq{-}2$ and $C\simeq\mathbb{P}^1$.
	\item If $C^2={-}2$, then $C$ is disjoint from $\Supp B$ and $(K_X+B)\cdot C=0$.
	\item If $C^2={-}1$, then $C$ is a $(-1)$-curve and $B\cdot C\leq1$. If $K_X+B\equiv0$, then $B\cdot C=1$.
	\end{enumerate}
\end{lem}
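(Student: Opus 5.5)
The whole argument rests on adjunction for $C$ combined with two sign facts. Nefness of ${-}(K_X+B)$ gives $(K_X+B)\cdot C\leq 0$. Since $C$ is not a component of $B$ and $B\geq0$, we also have $B\cdot C\geq0$. Together these yield
$$K_X\cdot C\leq -B\cdot C\leq 0 .$$
Let $p_a(C)\geq0$ be the arithmetic genus of $C$. Adjunction reads
$$2p_a(C)-2=K_X\cdot C+C^2\leq C^2-B\cdot C\leq C^2<0,$$
so $p_a(C)=0$. An irreducible curve of arithmetic genus zero is smooth and rational, hence $C\simeq\mathbb{P}^1$. The same inequality becomes $-2\leq C^2-B\cdot C$, which gives $C^2\geq -2+B\cdot C\geq-2$. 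This proves (a).

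For (b), assume $C^2=-2$. Then every inequality in the chain above is an equality:
$$-2=K_X\cdot C+C^2=C^2-B\cdot C .$$
Hence $B\cdot C=0$ and $K_X\cdot C=0$, and so $(K_X+B)\cdot C=0$. Every component of $B$ is distinct from $C$ and meets it non-negatively, and the coefficients of $B$ are positive. So $B\cdot C=0$ forces each component of $\Supp B$ to be disjoint from $C$.

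For (c), assume $C^2=-1$. Then $C\simeq\mathbb{P}^1$ with $C^2=-1$, so $C$ is a $(-1)$-curve and $K_X\cdot C=-1$. The bound $C^2\geq-2+B\cdot C$ now reads $B\cdot C\leq1$. If moreover $K_X+B\equiv0$, then $(K_X+B)\cdot C=0$, so $B\cdot C=-K_X\cdot C=1$.

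Nothing here is a genuine obstacle. The only point needing care is the fact that $p_a(C)=0$ forces $C$ to be smooth: this is the standard formula $p_a(C)=g(\tilde C)+\delta$ with $\delta\geq0$ for the normalization $\tilde C$, and $\delta=0$ exactly when $C$ is smooth. One also needs $C\not\subseteq\Supp B$ to make sure $B\cdot C\geq0$ (the statement writes ``on $C$'', which should read ``on $X$''). This is the argument of \cite[Lemma 1.2]{AM04}.
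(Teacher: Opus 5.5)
Your proposal is correct and follows essentially the same route as the paper: the single adjunction chain $-2\le 2p_a(C)-2=(K_X+C)\cdot C\le (K_X+B)\cdot C+C^2\le C^2$, which forces $p_a(C)=0$, followed by the equality analysis for (b) and (c). You also correctly flag the typo ``on $C$'' for ``on $X$'', and you spell out the disjointness step in (b), which the paper leaves implicit: positive coefficients and $B\cdot C=0$ force each component of $\Supp B$ to miss $C$.
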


\begin{proof}
By the adjunction formula we have
\begin{align}
{-}2&\leq 2p_a(C)-2=(K_X+C)\cdot C\label{eq:p_a}\\
&\leq (K_X+C)\cdot C+B\cdot C=(K_X+B)\cdot C+C^2\leq C^2,\notag
\end{align}
which shows the first statement in (a). Since $C^2<0$ and since $2p_a(C)-2$ is even, we must have $p_a(C)=0$, hence $C\simeq\mathbb{P}^1$. This shows (a).

If $C^2={-}2$, then from the second inequality in \eqref{eq:p_a} we obtain that $B\cdot C=0$, and from the third inequality in \eqref{eq:p_a} we obtain that $(K_X+B)\cdot C=0$, which gives (b).

If $C^2={-}1$, then from \eqref{eq:p_a} we obtain that $K_X\cdot C={-}1$ and $B\cdot C\leq1$. If $K_X+B\equiv0$, then \eqref{eq:p_a} gives $B\cdot C=1$, which yields (c).
\end{proof}

Thus, if $X$ is a smooth projective surface and if $B\geq0$ is an $\R$-divisor on $X$ such that ${-}(K_X+B)$ is nef, then Lemma \ref{lem:negativeselfintersection} says that a curve $C$ on $X$ with $C^2={-}2$ is either a component of $B$ or it is a smooth rational curve disjoint from $\Supp B$. This motivates the following definition.

\begin{dfn}\label{dfn:nodal_set}
Let $X$ be a smooth projective surface and let $B\geq0$ be an $\R$-divisor on $X$. We set
$$\Delta_{X,B}^{\nod}\coloneqq\big\{[C]\mid C\text{ is a curve on $X$ with $C^2={-}2$},\ C\cap\Supp B=\emptyset\big\}.$$
This is a well-defined set by Lemma \ref{lem:negativeexceptional}. A curve whose class is in $\Delta_{X,D}^{\nod}$ is called an \emph{internal $(-2)$-curve}.
\end{dfn}

\subsection{Sakai surfaces}

Let $D = \sum n_iD_i$ be an effective integral divisor on a smooth projective surface $X$, where $D_i$ are prime components. Then $D$ is \emph{of canonical type} if $K_X\cdot D_i = D\cdot D_i = 0$ for all $i$.

Recall that a \emph{Halphen surface} is a smooth rational surface if there exists a positive integer $m$ such that the linear system $|{-}mK_X|$ is basepoint free and of dimension $1$. The smallest such positive integer $m$ is called the \emph{index of $X$}. One can easily show that $K_X^2=0$ and that there exists an elliptic fibration $X\to\mathbb P^1$.

The following definition, introduced in \cite{Sak01}, relaxes the notion of Halphen surfaces.

\begin{dfn}\label{def:gen_halphen_surface}
	A smooth rational surface $X$ is a \emph{generalised Halphen surface} if the linear system $|{-}K_X|$ contains a divisor of canonical type. If, additionally, we have $h^0(X,-K_X)=1$, then $X$ is called an \emph{$R$-surface}, where $R$ is the type of the unique divisor $D \in |{-}K_X|$.
\end{dfn}

If $X$ is an $R$-surface, then the intersection matrix of $D \in |{-}K_X|$ determines a root system $R$ of affine type: such surfaces are then classified according to the \emph{type $R$} of $D$, and we also say that $X$ is \emph{of type $R$}.
The following table is \cite[Table 2, p.\ 181]{Sak01}, listing the possible types of $R$-surfaces.  

\medskip

\begin{center}
\begin{tabular}{|p{0.26\linewidth}|p{0.65\linewidth}|}
  \hline
  \quad & Type $R$ \\ 
  \hline
  Elliptic type & $A_0^\smone$ \\ 
  \hline
  Multiplicative type & $A_0^\smonestar, A_1^\smone,\ldots, A_7^\smone, {A_7^\smone}', A_8^\smone$ \\ 
  \hline
  Additive type & $A_0^\smonestarstar, A_1^\smonestar, A_2^\smonestar, D_4^\smone, \ldots, D_8^\smone, E_6^\smone, E_7^\smone, E_8^\smone$ \\
  \hline
\end{tabular}
\end{center}

\medskip

In this paper and in its sequel \cite{LSX26}, the following refinement of generalised Halphen surfaces plays a central role.

\begin{dfn}\label{dfn:goodHalphen}
	A smooth projective surface $X$ is a \emph{Sakai surface} if it is an $R$-surface with $\kappa(X,-K_X) = 0$.
\end{dfn}

The following result shows that if $X$ is a surface of type $R$, then $X$ is either a Sakai surface or a Halphen surface of index $m>1$. Moreover, if $R$ is of additive type, then it is necessarily a Sakai surface.

\begin{lem}\label{lem:char_of_indecomp}
	Let $X$ be a generalised Halphen surface. Then:
	\begin{enumerate}[\normalfont (a)]
		\item $-K_X$ is nef and $K_X^2 = 0$,
        \item $X$ is the blowup of $\pr^2$ at nine (possibly infinitely near) points,
		\item $X$ is either a Sakai surface or a Halphen surface,
		\item if $X$ is an $R$-surface, then either $X$ is a Sakai surface, or $X$ is a Halphen surface of index $m>1$ and $X$ is of elliptic type or of multiplicative type.
	\end{enumerate}
\end{lem}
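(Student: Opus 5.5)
We prove the four parts in order. Throughout, let $D\in|{-}K_X|$ be a divisor of canonical type, which exists by Definition \ref{def:gen_halphen_surface}.

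For (a), we use that $D$ is effective and satisfies $D\cdot D_i=0$ for every component $D_i$. Let $C$ be any irreducible curve. If $C$ is not a component of $D$, then ${-}K_X\cdot C=D\cdot C\geq0$. If $C=D_i$, then ${-}K_X\cdot C=D\cdot D_i=0$. Hence ${-}K_X$ is nef. Moreover $K_X^2=D^2=\sum n_i D\cdot D_i=0$.

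For (b), we proceed in three steps.
\begin{enumerate}[\normalfont (1)]
\item Since $X$ is rational, Noether's formula gives $\rho(X)=10-K_X^2=10$.
\item Since ${-}K_X$ is nef and nonzero (it is effective and $h^0\geq1$), $X$ is not $\pr^1\times\pr^1$ or $\mathbb F_2$, and it is not $\mathbb F_n$ with $n\geq3$, because then $\rho=2\neq10$. So $X$ is not minimal. Contract a $(-1)$-curve $E$. The anticanonical divisor stays nef on the contraction: $\pi_*D\in|{-}K_{X'}|$, and $K_X=\pi^*K_{X'}+E$ with $E\cdot D=1$.
\item Iterate this. A nef anticanonical rational surface with $\rho\geq2$ that is not $\pr^1\times\pr^1$ or $\mathbb F_2$ is contractible further. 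The usual argument ends at $\pr^2$, possibly after replacing a final $\mathbb F_0$ or $\mathbb F_2$ step by an elementary transformation. After $9$ blowdowns we reach $\pr^2$, which gives the nine possibly infinitely near points.
\end{enumerate}
The main obstacle in (b) is exactly the case where the minimal model would be $\mathbb F_0$ or $\mathbb F_2$. Here we use that a blowup of $\mathbb F_0$ or $\mathbb F_2$ at a point not on the negative section is also a blowup of $\pr^2$. Since $\rho(X)=10>2$, we can choose which $(-1)$-curve to contract so that we land on $\mathbb F_1$ instead.

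For (c), since $K_X^2=0$, Riemann--Roch gives $h^0(-mK_X)\geq1$ for all $m\geq1$. If $\kappa(X,{-}K_X)=0$, we first show $h^0(-K_X)=1$. If $h^0(-K_X)\geq2$, then $\kappa\geq1$, a contradiction. Hence $X$ is an $R$-surface, and so a Sakai surface by Definition \ref{dfn:goodHalphen}. If $\kappa(X,{-}K_X)>0$, then $\kappa=1$, since $\nu({-}K_X)\leq1$ because $K_X^2=0$. Then the Zariski decomposition (Lemma \ref{lem:Zariski}) of ${-}mK_X$ has nef positive part $P$ with $\kappa(X,P)=1$ and $P^2=0$. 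We must show that ${-}mK_X$ is itself basepoint free for some $m$. The fixed part $N$ satisfies $N\leq D$-multiples, and since $D\cdot D_i=0$ for every component, a standard argument (Zariski's lemma on fibres) shows that the moving part is a multiple of ${-}K_X$ with no fixed part. A nef divisor with $P^2=0$ whose moving part has no fixed part is basepoint free, so $|{-}mK_X|$ is a pencil without base points and $X$ is Halphen.

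For (d), suppose $X$ is an $R$-surface and not Sakai. By (c) it is Halphen, and its index is $m>1$ because $h^0(-K_X)=1$. Then $D$ supports a multiple fibre $mD$ of the elliptic fibration. Multiple fibres of an elliptic fibration are of type ${}_mI_n$ (including smooth elliptic fibres), so $D$ is elliptic or multiplicative; additive Kodaira types cannot be multiple. This rules out the additive types.
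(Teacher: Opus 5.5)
Your (a) is the paper's argument, and so is your (d): the paper also uses that multiple fibres of an elliptic fibration are of type ${}_mI_n$ \cite[Proposition 5.1.8]{CD89}. It additionally quotes uniqueness of the multiple fibre from \cite{CD12}, which is not needed. The real difference is elsewhere. The paper gets (b) by citing \cite[Proposition 2]{Sak01}, and in (c) it gets semiampleness of $-K_X$ when $\kappa(X,-K_X)=1$ from \cite[Lemma 3.3]{AL11}. You try to prove both directly. That makes the lemma self-contained and the route is sound, but in both places the decisive step is asserted rather than proved.

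\textbf{Part (b).} The reason you give, $\rho(X)=10>2$, is not what lets you avoid $\mathbb F_2$. For example, the blowup of $\mathbb F_2$ at a point of the $(-2)$-section $C_0$ has exactly two $(-1)$-curves, and contracting them gives $\mathbb F_2$ and $\mathbb F_3$. What you need is nefness of $-K_X$. If a point of $\mathbb F_2$ blown up by $X\to\mathbb F_2$ lay on $C_0$, the strict transform of $C_0$ on $X$ would have self-intersection at most $-3$. This contradicts Lemma \ref{lem:negativeselfintersection}(a) with $B=0$. So the first point blown up on $\mathbb F_2$ avoids $C_0$, and the elementary transformation there lands on $\mathbb F_1$. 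Then $X\to\pr^2$ is a composition of nine blowups, since $\rho(X)=10$.

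\textbf{Part (c).} The Zariski decomposition of $-mK_X$ is trivial because $-K_X$ is nef, so it does no work. What you need is the fixed part $F$ of $|-mK_X|$, for some $m$ with $h^0(X,-mK_X)\geq2$; note $0\leq F\leq mD$. Zariski's lemma (negative semidefiniteness, with kernel $\R D$, on the span of the components of $D$) needs two inputs you do not supply.
\begin{enumerate}
\item $D$ is connected. This holds because $h^0(D,\OO_D)\leq 1+h^1(X,\OO_X(K_X))=1$.
\item $F^2=0$. Write $-mK_X\sim M+F$ with $M$ mobile. Since $D\cdot F=0$ and $K_X^2=0$, you get $M^2+M\cdot F=0=M\cdot F+F^2$. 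As $M$ is nef, $M^2=M\cdot F=F^2=0$.
\end{enumerate}
Hence $F=cD$ with $c<m$, since $M\neq0$. Then $M\sim(m-c)D$ is a mobile system with $M^2=0$, so it is basepoint free, and $-K_X$ is semiample.
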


\begin{proof}
	Let $D\in|{-}K_X|$ be a divisor of canonical type, and let $C$ be an irreducible curve on $X$. If $C$ is not a component of $D$, then ${-}K_X\cdot C=D\cdot C\geq0$. If $C$ is a component of $D$, we have $D\cdot C=0$ since $D$ is of canonical type. This shows that ${-}K_X$ is nef, and $K_X^2 = D^2 = 0$, which proves (a).

    Part (b) follows immediately from \cite[Proposition 2(a)(d)]{Sak01}.
	
	Now we show (c). Note first that $-K_X$ not big by (a), and $|{-}K_X|\neq\emptyset$ by assumption. Thus, $\kappa(X,-K_X)\in\{0,1\}$. If $\kappa(X,-K_X) =0 $, then $X$ is a Sakai surface by definition. If $\kappa(X,-K_X) =1 $, then $-K_X$ is semiample, see for instance \cite[Lemma 3.3]{AL11}. Hence, $X$ is a Halphen surface.

	Finally, we show (d). As in the proof of (c), if $\kappa(X,-K_X) = 0$, then $X$ is a Sakai surface, hence we may assume that $\kappa(X,-K_X) = 1$.  Then by (c), the surface $X$ is a Halphen surface of index $m$. Since $|{-}K_X|$ is of dimension $1$, we conclude that $m> 1$. Let $f\colon X\to\mathbb P^1$ be the induced elliptic fibration. Since $f$ is given by the linear system $|{-}mK_X|$, the divisor $mD$ is a multiple fibre of $f$, and it is the unique multiple fibre of $f$ by \cite[Proposition 2.2]{CD12}. Then \cite[Proposition 5.1.8 on p.\ 291]{CD89} implies that the type of $D$ is either elliptic or multiplicative.
\end{proof}

We have the following important result which will be used in the proof of Theorem \ref{thm:main1}.

\begin{thm}\label{thm:sak84}
Let $X$ be a smooth rational surface with $\kappa(X,-K_X) = 0$ and let $D$ be the unique effective $\Q$-divisor such that $D\sim_\Q{-}K_X$. Let $P$ be the positive part in the Zariski decomposition of $D$, and assume that $P\neq0$. Then there exists a birational morphism $\pi\colon X \rightarrow X_0$ to a smooth surface $X_0$, such that $\pi$ is the composition of contractions of $(-1)$-curves and:
\begin{enumerate}[\normalfont (a)]
\item $X_0$ is a Sakai surface,
\item there exists $\alpha\in\Q_{>0}$ such that $ P =\alpha\pi^*D_0 $, where $D_0\in|{-}K_{X_0}|$,
\item $\Exc(\pi)\subseteq\Supp P\subseteq\Supp D$,
\item $D\in|{-}K_X|$,
\item $D$ is connected,
\item $\Delta_{X,D}^{\nod}=\emptyset$ if and only if $\Delta_{X_0,D_0}^{\nod}=\emptyset$.
\end{enumerate}
\end{thm}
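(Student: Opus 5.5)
The plan is to run a $K$-MMP-type contraction of $(-1)$-curves inside $\Supp N$, where $D=N+P$ is the Zariski decomposition, and to stop when the negative part becomes empty. This is essentially Sakai's argument.

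\emph{Step 1: first properties of $P$.} Since $\kappa(X,{-}K_X)=0$ and $D\geq0$, Lemma \ref{lem:Zariski}(d),(e) give that $P\ge0$ is a nef $\Q$-divisor with $\kappa(X,P)=0$. Hence $P^2=0$, because a nef divisor with $P^2>0$ is big. Moreover, $P\cdot D=P\cdot P+P\cdot N=0$.

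\emph{Step 2: the induction.} We induct on the number of components of $N$.

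If $N=0$, then ${-}K_X\equiv P$ is nef with $K_X^2=0$. By Riemann--Roch on the rational surface $X$ we get $h^0(-K_X)\ge1$, so $D\in|{-}K_X|$. We then show that $D$ is of canonical type: for each component $D_i$ we have $D\cdot D_i=P\cdot D_i\ge0$ and $\sum n_iD\cdot D_i=0$, so every $D\cdot D_i=0$. By adjunction, each $D_i$ with $D_i^2<0$ is then a $(-2)$-curve, so $K_X\cdot D_i=0$ for all $i$. Thus $X$ is an $R$-surface with $\kappa=0$, that is, a Sakai surface, and we take $\pi=\id$.

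If $N\neq0$, we look for a $(-1)$-curve $E\subseteq\Supp N$ with $P\cdot E=0$. We have
$$K_X\cdot N=-(P+N)\cdot N=-N^2>0,$$
so some component $C$ of $N$ satisfies $K_X\cdot C>0$, and $N$ is negative definite. I would instead argue with $-K_X\cdot C$. There is a component $C$ of $N$ with $P\cdot C=0$ and $D\cdot C<0$, since $D\cdot N=N^2<0$. Writing $C^2<0$ and using adjunction $K_X\cdot C=-D\cdot C>0$, one must produce a $(-1)$-curve, and this is the delicate point. I expect to use the standard fact from Sakai \cite{Sak01}: on a rational surface with $D\sim_\Q -K_X$ effective and $N\neq0$, the negative part contains a $(-1)$-curve $E$ with $P\cdot E=0$. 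To argue this, contract $C$ if it is a $(-1)$-curve. Otherwise, note that $h^0(-K_X)$ and the relation $K_X+D\equiv0$ with Lemma \ref{lem:negativeselfintersection}-type adjunction estimates force some curve of $\Supp N$ to have $K_X\cdot E<0$ and $E^2<0$, hence $E$ is a $(-1)$-curve. Contracting $E$ via $\sigma\colon X\to X'$ gives
$$D'=\sigma_*D\sim_\Q -K_{X'},\qquad \kappa(X',-K_{X'})=0$$
by Lemma \ref{lem:pushforward}(a) and $\sigma^*K_{X'}=K_X-E$, together with $P=\sigma^*P'$ (because $P\cdot E=0$). The number of components of $N$ drops, so induction applies. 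Composing gives $\pi$ with $P=\alpha\pi^*D_0$; the constant $\alpha$ appears because $P_0$ and $D_0$ are both effective and numerically proportional, and $\kappa=0$ forces $P_0=\alpha D_0$. This gives (a) and (b).

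\emph{Step 3: the remaining items.} For (c), the exceptional curves of $\pi$ lie in $\Supp\pi^*D_0$ at each stage, since we only contract curves meeting the total transform, and therefore lie in $\Supp P$. For (d), we have $\pi^*D_0+\sum a_iE_i\in|{-}K_X|$ with $a_i\ge0$ integral, because the blown-up points lie on the anticanonical curve. Uniqueness of $D$ then identifies it with this divisor. For (e), $D_0$ is connected (it is of canonical type in $|{-}K_{X_0}|$, $h^1(\OO)=0$), and blowups at points of the support preserve connectedness. For (f), $\pi$ is an isomorphism away from $\Supp D$, and curves disjoint from $\Supp D$ have the same self-intersection on both surfaces. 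So internal $(-2)$-curves correspond bijectively: strict transforms of $(-2)$-curves disjoint from $D_0$ avoid the blown-up points, and conversely.

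The main obstacle is the existence of a $(-1)$-curve $E$ in $\Supp N$ with $P\cdot E=0$ at each step. I would try first to prove it by considering the minimal $C$ with $K_X\cdot C>0$ failing, and to deduce it from Sakai's Proposition on the structure of anticanonical divisors.
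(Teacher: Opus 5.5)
There is a genuine gap, at exactly the step you flag as the main obstacle. The $(-1)$-curves contracted by $\pi$ need not lie in $\Supp N$, so an induction that contracts $(-1)$-curves of $\Supp N$ and lowers the number of components of $N$ cannot work.

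The example in the Remark following the theorem is a counterexample. There $D_0=D_1+D_2$, where $D_1,D_2$ are $(-2)$-curves meeting transversally in two points. The map $\pi$ blows up these two points, with exceptional curves $E_1,E_2$, and $D=D_1'+D_2'+E_1+E_2$. One checks that
\[
P=\tfrac12\pi^*D_0=\tfrac12(D_1'+D_2')+E_1+E_2,\qquad N=\tfrac12(D_1'+D_2').
\]
So $\Supp N$ consists of two disjoint $(-4)$-curves with $K_X\cdot D_i'=2>0$. There is no $(-1)$-curve in $\Supp N$, and no component of $N$ with $K_X\cdot C<0$. In general $K_X\cdot C=-N\cdot C$ for a component $C$ of $N$, and this is positive whenever the components of $N$ are pairwise disjoint.

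The curves that must be contracted, $E_1$ and $E_2$, lie in $\Supp P\setminus\Supp N$. Contracting $E_1$ alone leaves a negative part with two components, namely two $(-3)$-curves meeting once. Your scheme, if it worked, would also always give $P=\pi^*D_0$, i.e.\ $\alpha=1$. Here $\alpha=\frac12$ is forced, because $P$ has half-integral coefficients.

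So the ``standard fact'' you want to quote is false as stated. The paper does not reprove this part. It takes the existence of $\pi$, part (a), and $P\sim_\Q\alpha\pi^*D_0$ directly from \cite[Theorem 3.4]{Sak84}. It then gets $P=\alpha\pi^*D_0$ from $\kappa(X,P)=0$ and $P\geq0$.

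Your Step 3 also needs repair.
\begin{itemize}
\item In (d) the sign is wrong. We have $K_X=\pi^*K_{X_0}+E$ with $E$ effective, exceptional and containing every $\pi$-exceptional curve, so ${-}K_X\sim\pi^*D_0-E$. The real issue is to show $\pi^*D_0-E\geq0$, and this does not follow from the blown-up points merely lying on $D_0$.
\item The paper settles (c) and (d) together. Since $D+E\sim_\Q\pi^*D_0$, $D+E\geq0$ and $\kappa(X,\pi^*D_0)=0$, we get $D+E=\pi^*D_0$. Hence $\Supp E\subseteq\Supp\pi^*D_0=\Supp P$, $D$ is integral, and $D=\pi^*D_0-E\sim{-}K_X$.
\item Since $D$ is then not the total transform of $D_0$, your argument for (e) via connectedness of total transforms does not apply directly. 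Instead, $D\in|{-}K_X|$ gives $h^0(\OO_D)\le h^0(\OO_X)+h^1(X,\OO_X(K_X))=1$, so $D$ is connected.
\item In your induction step, Lemma \ref{lem:pushforward}(a) only yields $\kappa(X',{-}K_{X'})\geq0$. The upper bound needs a separate argument.
\end{itemize}
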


\begin{proof}
The first statement of the theorem, as well as (a), is contained in \cite[Theorem 3.4]{Sak84}. Also by op.\ cit.\ there exists a positive rational number $\alpha$ such that $ P \sim_\Q\alpha\pi^*D_0 $, and then (b) follows since $ \kappa(X,-K_X) = 0$ and since $P\geq0$ by Lemma \ref{lem:Zariski}(d).

Now, since $X_0$ is smooth, there exists an effective $\pi$-exceptional divisor $E$ containing all curves contracted by $\pi$, such that
\begin{equation}\label{eq:67aaa}
{-}K_X+E \sim \pi^*({-}K_{X_0}).
\end{equation}
Since $D\sim_\Q{-}K_X$ and $D_0\sim{-}K_{X_0}$, we conclude that $D+E \sim_\Q \pi^*D_0$. As $\kappa(X,\pi^*D_0)=\kappa(X_0,D_0)=0$ and $D+E \geq 0$, this yields
\begin{equation}\label{eq:67}
D+E = \pi^*D_0.
\end{equation}
Therefore, $\Supp E\subseteq \Supp \pi^*D_0=\Supp P $ by (b). As $0\leq P\leq D$ by Lemma \ref{lem:Zariski}(d), we deduce (c). Furthermore, since $E$ and $D_0$ are integral divisors, we conclude from \eqref{eq:67} that $D$ is integral as well. Moreover, \eqref{eq:67aaa} gives
$$-K_X\sim \pi^*D_0-E=D,$$
which proves (d). Part (e) is now proved in the same way as \cite[Proposition 6]{Sak01}.

Finally, by (c) and (e) we conclude that $\pi\big(\Exc(\pi)\big)\subseteq\Supp D_0$, as $D_0 = \pi_* D$. Therefore, the map $\pi$ induces an isomorphism between the sets $X\setminus\Supp D$ and $X_0\setminus\Supp D_0$. It is easy to see that this implies (f), and finishes the proof.
\end{proof}

\begin{rem}
The map $\pi$ from Theorem \ref{thm:sak84} is not an isomorphism in general. For instance, let $X_0$ be a Sakai surface such that $-K_{X_0}\sim D_1+D_2$, where $D_1$ and $D_2$ are smooth $({-}2)$-curves such that $D_1\cdot D_2=2$; such a surface exists by \cite[Appendix A]{Sak01}. Let $\pi\colon X\to X_0$ be the blowup of $X_0$ along the two points in $D_1\cap D_2$. Then $X$ satisfies the assumptions of Theorem \ref{thm:sak84}. Indeed, let $D_1'$ and $D_2'$ be the strict transforms of $D_1$ and $D_2$ on $X$, and set $B\coloneqq\frac12(D_1'+D_2'+\pi^*D_1+\pi^*D_2)$. Then ${-}K_X\sim B$, hence $\kappa(X,{-}K_X)\geq0$. Moreover, $\kappa(X,{-}K_X)\leq\kappa(X_0,{-}K_{X_0})=0$ by Lemma \ref{lem:pushforward}(a), thus $\kappa(X,{-}K_X)=0$. If $P$ is the positive part in the Zariski decomposition of $B$, then $\kappa(X,P)=0$ by Lemma \ref{lem:Zariski}(e). In particular, $P^2=0$ since $P$ is not big, and $P\neq0$ since $P\geq \frac12\pi^*(D_1+D_2)$ by Lemma \ref{lem:Zariski}(c).
\end{rem}

\subsection{The effective cone}

If $X$ is a smooth projective surface and if $H$ is an ample $\Q$-divisor on $X$, define
$$ \mathcal C_X^+ \coloneqq  \{ x\in N^1(X)_\R\mid x^2 >0  \text{ and } x\cdot H >0 \}.$$
This cone does not depend on $H$: indeed, fix $x\in N^1(X)_\R$ such that $x^2 >0$. Let $H_1$ and $H_2$ be ample $\Q$-divisors and assume that $x\cdot H_1>0$ and $x\cdot H_2\leq0$. Then there exists $t\geq0$ such that $x\cdot(tH_1+H_2)=0$. Since $tH_1+H_2$ is ample, we conclude by the Hodge index theorem that $x=0$, a contradiction.

If $X$ is a smooth projective surface, we further define
$$ \cal{M}_X \coloneqq  \big\{ [E]\in N^1(X) \mid E^2 = K_X\cdot E = -1 \text{ and }h^0(X,E)\geq0 \big\}. $$
We have the following very useful lemma.

\begin{lem}\label{lem:rational_Mori_cone}
Let $X$ be a smooth rational surface and let $B\geq0$ be an $\R$-divisor such that ${-}(K_X+B)$ is nef. Let $\Comp(B)$ be the set of numerical classes of all the components of $B$. Then $\overline{\Eff}(X)$ is the convex hull of
    $$\overline{\mathcal C_X^+}\cup\mathcal{M}_X\cup\Comp(B)\cup\Delta_{X,B}^{\nod}.$$
\end{lem}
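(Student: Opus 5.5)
The plan is to prove two inclusions. Let $\mathcal K$ denote the closed convex hull of $\overline{\mathcal C_X^+}\cup\mathcal{M}_X\cup\Comp(B)\cup\Delta_{X,B}^{\nod}$.

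The inclusion $\mathcal K\subseteq\overline{\Eff}(X)$ is the easy direction. Elements of $\Comp(B)$ and $\Delta_{X,B}^{\nod}$ are classes of curves. For $[E]\in\mathcal M_X$, Riemann--Roch on the rational surface $X$ gives $\chi(\OO_X(E))=1+\frac12(E^2-K_X\cdot E)=1$. Since $h^2(X,E)=h^0(X,K_X-E)=0$ (because $H\cdot(K_X-E)<0$ for ample $H$ once we know $H\cdot E>0$), the class $E$ is effective. Alternatively, effectivity is simply part of the definition of $\mathcal M_X$. Finally, $\overline{\mathcal C_X^+}\subseteq\overline{\Eff}(X)$ because every class $x$ with $x^2>0$ and $x\cdot H>0$ is big by Riemann--Roch.

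For the converse, I would work with the dual cones. It suffices to show that every class $y$ which is nonnegative on all of the generators is nonnegative on every irreducible curve $C$, because then $\mathcal K^\vee\subseteq\Nef(X)=\overline{\Eff}(X)^\vee$. Equivalently, I want to show that each irreducible curve $C$ lies in $\mathcal K$. Write $\Gamma$ for its class. There are three cases.
\begin{enumerate}[\normalfont (1)]
\item If $C^2\geq0$, then $\Gamma\in\overline{\mathcal C_X^+}$, since $\Gamma\cdot H>0$.
\item If $C^2<0$ and $C$ is a component of $B$, then $\Gamma\in\Comp(B)$.
\item If $C^2<0$ and $C$ is not a component of $B$, then Lemma \ref{lem:negativeselfintersection} applies, because ${-}(K_X+B)$ is nef. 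It gives $C^2\in\{-1,-2\}$ and $C\simeq\pr^1$. If $C^2=-1$, then $C$ is a $(-1)$-curve, so $K_X\cdot C=-1$ and $\Gamma\in\mathcal M_X$. If $C^2=-2$, then by Lemma \ref{lem:negativeselfintersection}(b) the curve $C$ is disjoint from $\Supp B$, so $\Gamma\in\Delta_{X,B}^{\nod}$.
\end{enumerate}
Hence every irreducible curve lies in $\mathcal K$. Since $\overline{\Eff}(X)$ is the closure of the cone generated by curve classes, we get $\overline{\Eff}(X)\subseteq\mathcal K$.

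The main obstacle is the statement asserts the \emph{convex hull}, not its closure, equals $\overline{\Eff}(X)$. I expect to have to show that the convex cone spanned by these sets is already closed. The plan is as follows. Take $x$ in the closure, and suppose first that $x^2<0$ or that $x$ lies on the boundary of the positive cone. Consider its Zariski-type decomposition into a negative part, supported on negative curves, and a part in $\overline{\mathcal C_X^+}$. For this I would work with pseudoeffective classes: if $x\in\overline{\Eff}(X)$, write $x=N+P$ as in \S\ref{subsec:Zariski}, extended to classes by numerical equivalence, with $P$ nef. Then $P^2\geq0$ and $P\cdot H\geq0$, so $P\in\overline{\mathcal C_X^+}$. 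The part $N$ is an effective combination of curves with negative definite intersection matrix, each of negative self-intersection. By the three cases above, each such curve lies in $\mathcal M_X\cup\Comp(B)\cup\Delta_{X,B}^{\nod}$. Thus $x$ lies in the convex hull itself, which gives the statement without taking closures. The delicate point is justifying the Zariski decomposition for an arbitrary pseudoeffective $\R$-class; this is supplied by Boucksom's divisorial Zariski decomposition as recalled in Remark \ref{rem:zariski}.
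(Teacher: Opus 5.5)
Your proposal is correct and, in its final form, is the paper's proof: $\overline{\mathcal C_X^+}\subseteq\overline{\Eff}(X)$ by Riemann--Roch, and conversely the Zariski decomposition $D=N+P$ of a pseudoeffective class puts $[P]$ in $\overline{\mathcal C_X^+}$, while Lemma \ref{lem:negativeselfintersection} sorts the components of $N$ into $\mathcal M_X\cup\Comp(B)\cup\Delta_{X,B}^{\nod}$. The intermediate duality argument with the closed hull $\mathcal K$ is harmless but redundant, since the Zariski decomposition step alone already gives the unclosed statement.
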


\begin{proof}
	We have $N^1(X)\cap\mathcal C_X^+ \subseteq \Eff(X)$ by \cite[Corollary V.1.8]{Har77}. Consequently, $N^1(X)_\Q\cap\mathcal C_X^+ \subseteq \Eff(X)$, hence $\overline{\mathcal C_X^+} \subseteq \overline{\Eff}(X)$. Since clearly $\mathcal M_X\cup\Comp(B)\cup\Delta_{X,B}^{\nod}\subseteq\overline{\Eff}(X)$, this shows one inclusion.
	
	For the opposite inclusion, let $D$ be a pseudoeffective $\R$-divisor on $X$, and let $D=N+P$ be the Zariski decomposition of $D$. Then $[P]\in\overline{\mathcal C^+_X}$ since $P$ is nef. Furthermore, the intersection matrix of the components of $N$ is negative definite. If $C$ is a component of $N$, this implies in particular that $C^2<0$. Then by Lemma \ref{lem:negativeselfintersection} we have $[C]\in \mathcal{M}_X\cup\Comp(B)\cup\Delta_{X,B}^{\nod}$. Now the lemma follows immediately.
    \end{proof}

We will also need a more precise characterisation of the set $\mathcal M_X$.

\begin{lem}\label{lem:rationalsurface-1curve}
    Let $X$ be a smooth projective rational\footnote{In the proof of Lemma \ref{lem:rationalsurface-1curve} we do not use rationality explicitly. We use the following two facts: that ${-}K_X$ is pseudoeffective and that $\chi(X,\OO_X)\geq1$. However, any smooth projective surface satisfying these two conditions is necessarily rational. Indeed, otherwise its relatively minimal model is a ruled surface $X'$ over a curve of genus $g\geq1$. Since both $X$ and $X'$ have rational singularities, we have $\chi(X,\OO_X)=\chi(X',\OO_{X'})$, and $\chi(X',\OO_{X'})=1-g\leq0$ by \cite[Corollary V.2.5]{Har77}, a contradiction.} surface such that ${-}K_X$ is pseudoeffective.
    \begin{enumerate}[\normalfont (a)]
    \item Let $E$ be an integral divisor on $X$ such that $ E^2 = K_X\cdot E = {-}1 $. If $h^0(X,E)\geq0$, then $E\cdot H > 0$ for any ample divisor $H$ on $X$. If $E\cdot H > 0$ for some ample divisor $H$ on $X$, then $h^0(X,E)\geq0$.
     \item For any ample divisor $H$ on $X$ we have
    $$ \cal{M}_X = \big\{ [E]\in N^1(X) \mid E^2 = K_X\cdot E = -1 \text{ and }E\cdot H >0 \big\}. $$
    \end{enumerate}    
\end{lem}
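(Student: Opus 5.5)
The plan is to prove both implications of (a) from Riemann--Roch together with the sign of the intersection numbers with $H$, and then to read off (b) directly from (a). Throughout I read the condition $h^0(X,E)\geq0$ in the statement and in the definition of $\mathcal M_X$ as saying that $E$ has a nonzero section, i.e.\ $h^0(X,E)\geq1$, since otherwise the condition is vacuous.

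\emph{First implication of (a).} Assume $h^0(X,E)\geq1$. Then $E$ is linearly equivalent to an effective divisor $E'$. Since $E^2={-}1$, we have $E\not\equiv0$, and hence $E'\neq0$. An ample divisor meets every nonzero effective divisor positively, so $E\cdot H=E'\cdot H>0$ for every ample $H$.

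\emph{Second implication of (a).} Assume $E\cdot H>0$ for some ample $H$. As the footnote notes, $X$ is rational, so $\chi(X,\OO_X)=1$. Riemann--Roch then gives
$$\chi(X,E)=\chi(X,\OO_X)+\tfrac12(E^2-K_X\cdot E)=1+\tfrac12({-}1+1)=1.$$
Hence $h^0(X,E)+h^2(X,E)\geq1$. By Serre duality, $h^2(X,E)=h^0(X,K_X-E)$, so it suffices to rule out $K_X-E$ having a nonzero section. Since ${-}K_X$ is pseudoeffective and $H$ is ample, we have $K_X\cdot H\leq0$. Therefore
$$(K_X-E)\cdot H\leq -E\cdot H<0.$$
But an effective divisor cannot have negative intersection with an ample divisor, so $h^0(X,K_X-E)=0$. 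This forces $h^0(X,E)\geq1$.

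\emph{Part (b).} Fix an ample $H$ and take $[E]$ with $E^2=K_X\cdot E={-}1$. If $[E]\in\mathcal M_X$, the first implication gives $E\cdot H>0$. Conversely, if $E\cdot H>0$, the second implication gives $h^0(X,E)\geq1$, so $[E]\in\mathcal M_X$. This is exactly the claimed description of $\mathcal M_X$.

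I expect the only genuine step to be excluding $h^2(X,E)\neq0$. This is precisely where the pseudoeffectivity of ${-}K_X$ enters, through the inequality $K_X\cdot H\leq0$; everything else is routine.
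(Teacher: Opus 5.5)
Your proposal is correct and follows the paper's proof essentially step for step: the first implication uses the positivity of an ample divisor on a nonzero effective divisor, and the second combines $K_X\cdot H\leq0$, Serre duality to show $h^2(X,E)=h^0(X,K_X-E)=0$, and Riemann--Roch giving $\chi(X,\OO_X(E))=1$. Your reading of the condition $h^0(X,E)\geq0$ as $h^0(X,E)\geq1$ is also exactly what the paper's argument establishes.
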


\begin{proof}
    Part (b) follows immediately from (a). The first statement in (a) is clear. For the second statement in (a), let $H$ be an ample divisor on $X$ such that $E\cdot H>0$. Since ${-}K_X$ is pseudoeffective, we have ${-}K_X\cdot H\geq0$, hence $(K_X-E)\cdot H<0 $. This implies that $h^0(X,K_X - E)=0$, hence $h^2(X,E)=0$ by Serre duality. Therefore, Riemann--Roch yields
    $$ \textstyle h^0(X,E) \geq \chi\big(X,\OO_X(E)\big)=1 + \frac{1}{2} E\cdot (E-K_X) = 1. $$
    This concludes the proof.
\end{proof}
	
\subsection{Pairs and generalised pairs}\label{subsec:pairs}

	A \emph{pair} $(X,B)$ consists of a normal variety $X$ and an effective Weil $\R$-divisor $B$ on $ X $ such that the divisor $K_X+B$ is $\R$-Cartier. We use freely notions of pairs and singularities of pairs as in \cite{KM98}. However, we only use the MMP occasionally, and even then only in the classical sense; a clear exposition of this theory is in \cite{Mat02}.
	
		If $X$ is a $\Q$-factorial klt variety, then we denote by $N_W^1(X)$ the group consisting of numerical classes of Weil divisors on $X$. Then by \cite[Theorem 1.10]{GKP16a}, there exists a positive integer $m$ and a well-defined injective map $N_W^1(X)\to N^1(X),\ [D]\to[mD]$. Since $N^1(X)$ is a lattice in $N^1(X)_\R$, we conclude that $N_W^1(X)$ is a lattice in $N^1(X)_\R$, see also \cite[Remark after Lemma 11]{Xu24}.
	
	We will need the following result crucially several times in the paper.

\begin{thm}\label{thm:terminalisation}
	Let $(X,B)$ be a klt pair. Then there exists a \emph{$\Q$-factorial terminalisation of $(X,B)$}, i.e.\ a $\Q$-factorial terminal pair $(X',B')$ together with a proper birational morphism $f\colon X' \to X$ such that
	$$ K_{X'}+B'\sim_\R f^*(K_X+B). $$
	If $X$ is a surface, such a pair $(X',B')$ is unique up to isomorphism, and $X'$ is smooth. 
\end{thm}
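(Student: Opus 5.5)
The existence statement is standard MMP technology, and I will treat it that way. Take a log resolution $g\colon Y\to X$ of $(X,B)$ and write $K_Y+B_Y=g^*(K_X+B)+E$, where $B_Y$ and $E$ are effective with no common components. Let $B_Y$ consist of the strict transform of $B$ together with the exceptional divisors of positive coefficient, that is, those with discrepancy $a<0$. Their coefficients are $-a<1$ because $(X,B)$ is klt.

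Next I modify the boundary so that every exceptional divisor with discrepancy in $(-1,0]$ is kept on $Y$. Run a $(K_Y+B_Y)$-MMP over $X$ with scaling; by \cite{BCHM} it terminates with $E$ contracted, by the negativity lemma. On the resulting model, extract exactly the divisors with $a(F,X,B)\le 0$; this is \cite[Corollary 1.4.3]{BCHM}. The output $(X',B')$ is $\Q$-factorial with $K_{X'}+B'\sim_\R f^*(K_X+B)$, and every $f$-exceptional divisor has non-positive discrepancy. Any divisor over $X'$ that is exceptional for $X'$ then has discrepancy $>0$, so $(X',B')$ is terminal.

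For surfaces I use the stronger structural facts.
\begin{itemize}
\item \emph{Smoothness of $X'$.} Terminal surface pairs have smooth underlying surface, since terminal surface singularities are smooth (\cite[Theorem 4.5]{KM98}). Hence $X'$ is smooth.
\item \emph{Uniqueness, setup.} Suppose $(X_1,B_1)$ and $(X_2,B_2)$ are two terminalisations. Take a common resolution $W$ with maps $p_i\colon W\to X_i$. Since $p_i^*(K_{X_i}+B_i)$ are both equal to the pullback of $K_X+B$, the exceptional divisors of $X_1\to X$ are exactly those divisorial valuations over $X$ with discrepancy $\le 0$. A divisor on $X_1$ not on $X_2$ would be exceptional over $X_2$ with discrepancy $\le 0$ relative to $(X_2,B_2)$, contradicting terminality. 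So $X_1$ and $X_2$ have the same divisors, and the induced birational map $X_1\dashrightarrow X_2$ is an isomorphism in codimension one.
\item \emph{Uniqueness, conclusion.} For surfaces, a birational map between smooth surfaces that is small over $X$ is an isomorphism. Concretely, $p_2\circ p_1^{-1}$ has no exceptional curves, and by Zariski's main theorem a birational map of normal surfaces with no contracted curves in either direction is an isomorphism. The isomorphism carries $B_1$ to $B_2$, since both boundaries are determined by the common crepant pullback.
\end{itemize}

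The main obstacle is the existence step in arbitrary dimension: extracting precisely the non-positive-discrepancy divisors requires the BCHM machinery. For surfaces one can avoid it by taking the minimal resolution and contracting $(-1)$-curves with positive discrepancy, but I expect citing BCHM to suffice.
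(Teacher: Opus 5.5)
Your proposal is correct and follows essentially the same route as the paper: existence via \cite[Corollary 1.4.3]{BCHM}, smoothness via \cite[Theorem 4.5]{KM98}, and uniqueness via discrepancies. For uniqueness, you argue that a divisor on one terminalisation but not the other has discrepancy $\le 0$ while being exceptional over a terminal pair, so the induced map is an isomorphism in codimension one and hence an isomorphism of surfaces. Your preliminary MMP on a log resolution is unnecessary, since that corollary applied directly to $(X,B)$ already extracts all exceptional divisors of non-positive discrepancy. Your Zariski's main theorem remark spells out the final step, which the paper asserts without comment.
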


\begin{proof}
The existence follows from \cite[Corollary 1.4.3]{BCHM} and the paragraph after that result. If $X$ is a surface, then $X'$ is smooth by \cite[Theorem 4.5(1)]{KM98}.

Let $g\colon (X'',B'')\to(X,B)$ be another $\Q$-factorial terminalisation of the pair $(X,B)$, and denote $\varphi\coloneqq g^{-1}\circ f$. We claim that $\varphi$ does not contract a divisor. Assume, for contradiction, that $E$ a prime divisor on $X'$ which is contracted by $\varphi$. Then $a(E,X',B')=-\mult_E B'\leq 0$, whereas $a(E,X'',B'')>0$ since $(X'',B'')$ is terminal and $E$ is exceptional over $X''$. On the other hand, since $K_{X'}+B'\sim_{\R} f^*(K_X+B)$ and $K_{X''}+B''\sim_{\R} g^*(K_X+B)$, we conclude that $a(E,X',B')=a(E,X,B)=a(E,X'',B'')$. This is a contradiction which proves the claim. Analogously, the map $\varphi^{-1}$ does not contract a divisor. Therefore, $\varphi$ is an isomorphism in codimension one, thus an isomorphism since $X$ is a surface.
\end{proof}

In this paper we adopt the following definition of Calabi--Yau generalised pairs.

\begin{dfn}\label{dfn:g-pairs}
    Let $(X,B)$ be a projective pair and let $M$ be a nef $\R$-divisor on $X$. We call the couple $(X,B+M)$ a \emph{generalised pair}, and we say that it is klt, respectively log canonical, if $(X,B)$ is klt, respectively log canonical. If, moreover, $X$ is $\Q$-factorial and
    $$K_X+B+M\equiv 0,$$
    then we say that $(X,B+M)$ is a \emph{Calabi--Yau generalised pair}.
\end{dfn}

A few remarks are in order. The terminology comes from the theory of generalised pairs introduced in \cite{BH14,BZ16}. In that theory, the divisor $M$ does not have to be $\R$-Cartier, it is only the pushforward of a nef $\R$-divisor on a higher birational model, and the singularities are defined slightly differently. However, if $X$ is a surface, then the definition of generalised pairs from op.\ cit.\ coincides with Definition \ref{dfn:g-pairs} by \cite[Lemma 2.4]{HanLiu20}.

\subsection{Cremona isometries}

The following definition is similar to that in \cite[Section 4]{Dol83}, see also \cite{Loo81}.

\begin{dfn}\label{dfn:cremona}
    Let $X$ be a $\Q$-factorial klt projective surface. An automorphism $\sigma$ of the vector space $N^1(X)_\R$ is a \emph{Cremona isometry} of $X$ if it satisfies the following properties:
    \begin{enumerate}[\normalfont (i)]
    	\item $\sigma$ maps the lattice $N_W^1(X)$ onto itself,\footnote{Recall that $N_W^1(X)$ is a lattice by \S\ref{subsec:pairs}.}
        \item $\sigma$ preserves the intersection form on $N^1(X)_\R$,
        \item $\sigma$ fixes the class $[K_X]$,
        \item $\sigma$ preserves the effective cone $\Eff(X)$.
    \end{enumerate}
    We denote the group of Cremona isometries of $X$ by $\Cr(X)$. Additionally, if $D$ is an effective divisor on $X$ with irreducible components $D_1,\dots,D_m$, we define $\Cr(X,D)$ as
    \[\Cr(X,D)\coloneqq\big\{\sigma\in\Cr(X)\mid\sigma\big([D_i]\big)=[D_i]\text{ for }i=1,\dots,m\big\}.\]
\end{dfn}

\begin{rem}\label{rem:duality}
    Let $X$ be a $\Q$-factorial projective surface and let $\sigma\in\Cr(X)$. Since $\sigma$ acts linearly on $N_1(X)_\R$, it preserves not only the cone $\Eff(X)$, but also its closure $\overline{\Eff}(X)$. Moreover, since $\sigma$ preserves the intersection form, it also preserves the dual cone $\Nef(X)$. Then, for topological reasons, $\sigma$ preserves the interior of $\Nef(X)$, i.e.\ the ample cone.
\end{rem}

We will need the following lemma.

\begin{lem}\label{lem:sigmanodal}
    Let $X$ be a smooth projective surface and let $B\geq0$ be an $\R$-divisor on $X$ such that ${-}(K_X+B)$ is nef. Then every $\sigma\in\Cr(X,B)$ preserves $\Delta_{X,B}^{\nod}$.
\end{lem}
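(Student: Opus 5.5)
Fix $\sigma\in\Cr(X,B)$ and an internal $(-2)$-curve $C$, so $C^2=-2$ and $C\cap\Supp B=\emptyset$. Set $x\coloneqq\sigma([C])$. The plan is to show three things in turn: that $x$ is the class of a curve with self-intersection $-2$, that this curve is not a component of $B$, and that it is disjoint from $\Supp B$.

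Since $\sigma$ is an isometry fixing $[K_X]$ and each class $[B_i]$ of a component of $B$, we get $x^2=-2$, $K_X\cdot x=K_X\cdot C=0$ and $x\cdot[B_i]=[C]\cdot[B_i]=0$ for every $i$. By Definition \ref{dfn:cremona}(iv), $x\in\Eff(X)$, so $x=[C']$ for an effective $\R$-divisor $C'$. By (i), $x$ is integral; this is only needed if we want an integral representative, and will not be used below.

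\textbf{Step 1: $C'$ is irreducible.} This is the main step. Apply the same reasoning to $\sigma^{-1}$, which also lies in $\Cr(X,B)$ because $\Cr(X,B)$ is a group. Write $C'=\sum a_jC_j$ with $a_j>0$. Then $[C]=\sigma^{-1}(x)=\sum a_j\sigma^{-1}([C_j])$, and each $\sigma^{-1}([C_j])$ is effective. By Lemma \ref{lem:negativeexceptional}, $C$ is the unique effective divisor in its class, so $C=\sum a_j D_j$ where $D_j\geq0$ and $[D_j]=\sigma^{-1}([C_j])$. Hence every $D_j$ is a multiple $c_jC$ with $c_j\geq0$, and in fact $c_j>0$ since $\sigma^{-1}$ is injective and $[C_j]\neq0$. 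Thus each $[C_j]$ is a positive multiple of $x$. Let $C_1$ be one component, so $[C_1]=\lambda x$ with $\lambda>0$. Then $C_1^2=-2\lambda^2<0$, and Lemma \ref{lem:negativeexceptional} forces $C'=\lambda^{-1}C_1$, because both are effective divisors in the class $x$. So $x=\lambda^{-1}[C_1]$ with $C_1$ an irreducible curve and $C_1^2<0$.

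\textbf{Step 2: $C_1$ is not a component of $B$.} If $C_1$ were a component $B_i$, then $x\cdot[B_i]=0$ would give $\lambda^{-1}C_1^2=0$, contradicting $C_1^2<0$.

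\textbf{Step 3: $C_1$ is an internal $(-2)$-curve.} By Step 2 we may apply Lemma \ref{lem:negativeselfintersection}: $C_1\simeq\pr^1$ and $-2\leq C_1^2\leq -1$. Since $K_X\cdot C_1=\lambda K_X\cdot x=0$, adjunction gives $C_1^2=-2$. Then $-2=\lambda^2x^2=-2\lambda^2$ forces $\lambda=1$, so $x=[C_1]$. By Lemma \ref{lem:negativeselfintersection}(b), $C_1$ is disjoint from $\Supp B$, so $x\in\Delta_{X,B}^{\nod}$. Applying the same argument to $\sigma^{-1}$ shows that $\sigma$ maps $\Delta_{X,B}^{\nod}$ onto itself.
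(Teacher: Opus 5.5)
Your proof is correct. Its skeleton matches the paper's: use $\sigma^{-1}$ and Lemma \ref{lem:negativeexceptional} to show that $\sigma([C])$ is carried by a single curve $C_1$, then conclude with Lemma \ref{lem:negativeselfintersection}.

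The difference is in how you control multiplicities. The paper writes $\sigma([C])=[\sum m_iC_i]$ with positive \emph{integers} $m_i$, and represents each $\sigma^{-1}([C_i])$ by an effective \emph{integral} divisor $D_i$. Then $C=\sum m_iD_i$ forces $k=1$ and $m_1=1$ at once. The paper rules out $C_1$ being a component of $B$ because $\sigma^{-1}$ would then fix $[C_1]$, giving $C=C_1$.

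You work with real coefficients throughout and obtain only $x=\lambda^{-1}[C_1]$. You rule out components of $B$ by comparing $x\cdot[B_i]=0$ with $x\cdot[C_1]<0$. You then recover $\lambda=1$ from $K_X\cdot x=0$, adjunction and $x^2=-2$. This costs one extra computation, but it avoids a step the paper uses tacitly: that an integral class in $\Eff(X)$ is a non-negative integral combination of curve classes. That does not follow from condition (i) of Definition \ref{dfn:cremona} alone.

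The one thing you leave unjustified is $K_X\cdot C=0$. It follows from Lemma \ref{lem:negativeselfintersection}(b) together with $B\cdot C=0$, since $C$ is disjoint from $\Supp B$.
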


\begin{proof}
    Fix $\sigma\in\Cr(X,B)$. Let $C$ be a $({-}2)$-curve on $X$ such that $[C]\in \Delta_{X,B}^{\nod}$. Then $\sigma\big([C]\big)\in\Eff(X)$, hence there exist positive integers $m_1,\dots,m_k$ and irreducible curves $C_1,\dots,C_k$ on $X$ such that $\sigma\big([C]\big)=\big[\sum_{i=1}^k m_i C_i\big]$.  Therefore, $[C]=\sum_{i=1}^k m_i \sigma^{-1} \big([C_i]\big)$, and note that $\sigma^{-1} \big([C_i]\big)\in\Eff(X)$ for each $i=1,\dots,k$. Hence, for each $i$ there exists an effective integral divisor $D_i$ whose numerical class is $\sigma^{-1} \big([C_i]\big)$, and thus, $[C]=\sum_{i=1}^k m_i [D_i]$. By Lemma \ref{lem:negativeexceptional} we conclude that $C=\sum_{i=1}^k m_iD_i$, and therefore, $k=1$ and $m_1=1$. In particular, $\sigma\big([C]\big)=[C_1]$.

    Assume first that $C_1$ is a component of $B$. Then, since $\sigma^{-1}$ fixes the numerical class of each component of $B$, we have $[C]=\sigma^{-1}\big([C_1]\big)=[C_1]$, hence $C=C_1$ by Lemma \ref{lem:negativeexceptional}. This is a contradiction as $C$ is not a component of $B$. Therefore, $C_1$ is not a component of $B$. Since $[C_1]^2=\sigma\big([C]\big)^2=[C]^2={-}2$, we conclude that $[C_1]\in \Delta_{X,B}^{\nod}$ by Lemma \ref{lem:negativeselfintersection}, as desired.
\end{proof}

\subsection{Actions of groups on cones}

In this subsection we follow \cite{Loo14,LZ25}.

Let $\Lambda$ be a free $\Z$-module of finite rank and let $V\coloneqq \Lambda\otimes_\Z \R$. In this paper, a \emph{convex cone} in $V$ is a convex subset $\mathcal C\subseteq V$ such that $\lambda v\in\mathcal C$ for every $\lambda\geq0$ and $v\in\mathcal C$. It is \emph{strictly convex} if its closure $\overline{\mathcal C}$ contains no lines. It is \emph{polyhedral}, respectively \emph{rational polyhedral}, if it is spanned by finitely many elements of $V$, respectively of $\Lambda$.

For a convex cone $\mathcal C\subseteq V$ we denote by $\operatorname{Int}(\mathcal C)$ the interior of $\mathcal C$, and we define $\mathcal C^+$ as the convex hull of $\overline{\mathcal C}\cap \Lambda$. We say that a subgroup of $G\subseteq\GL(V)$ \emph{preserves $\mathcal C$} if $g(\mathcal C)=\mathcal C$ for each $g\in G$; then we also say that \emph{$G$ acts on $\mathcal C$}, and that we have an \emph{action of $G$ on $\mathcal C$}. Moreover, if $\Pi\subseteq\mathcal C$, then we denote $G\cdot\Pi\coloneqq\bigcup_{g\in G}g(\Pi)$.

\begin{dfn}
Let $\mathcal C\subseteq V$ be a strictly convex cone such that $\operatorname{Int}(\mathcal C)\neq\emptyset$. Let $G$ be a subgroup of $\GL(V)$ which fixes $\Lambda$ and preserves $\mathcal C$.
\begin{enumerate}[\normalfont (a)]
\item The action of $G$ on $\mathcal C$ is \emph{of polyhedral type} if there exists a polyhedral cone $\Pi \subseteq\mathcal C^+$ such that $\operatorname{Int}(\mathcal C)\subseteq G \cdot \Pi$.
\item The action of $G$ on $\mathcal C^+$ has a \emph{rational polyhedral fundamental domain} if there exists a rational polyhedral cone $\Pi \subseteq \mathcal C^+$ such that $\Gamma \cdot \Pi = \mathcal C^+$, and if $\operatorname{Int}\big(g(\Pi)\big) \cap \operatorname{Int}(\Pi)\neq\emptyset$ for some $g \in G$, then $g$ is the identity in $G$.
\end{enumerate} 
\end{dfn}

It is a remarkable fact from \cite{Loo14} that the two conditions in the previous definition are equivalent. For the proof of the following theorem, see \cite[Proposition 4.1, Application 4.14 and Corollary 4.15]{Loo14} as well as \cite[Lemma 3.5]{LZ25}.

\begin{thm}\label{thm:Looijenga}
Let $\Lambda$ be a free $\Z$-module of finite rank and let $V\coloneqq \Lambda\otimes_\Z \R$. Let $\mathcal C\subseteq V$ be a strictly convex cone such that $\operatorname{Int}(\mathcal C)\neq\emptyset$. Let $G$ be a subgroup of $\GL(V)$ which fixes $\Lambda$ and preserves $\mathcal C$. Assume that the action of $G$ on $\mathcal C$ is of polyhedral type.  Then $G \cdot \Pi = \mathcal C^+$, and there exists a rational polyhedral fundamental domain for the action of $G$ on $\mathcal C^+$.
\end{thm}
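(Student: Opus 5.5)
The statement to be proved is Theorem \ref{thm:Looijenga}, which is quoted from \cite{Loo14} and \cite{LZ25}. I would nonetheless reconstruct the argument in the following order.

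\emph{Step 1: set-up.} Assume $\operatorname{Int}(\mathcal C)\subseteq G\cdot\Pi$ with $\Pi\subseteq\mathcal C^+$ polyhedral. Since $\Pi$ is spanned by finitely many vectors of $\mathcal C^+$, and each of these is a nonnegative combination of lattice points of $\overline{\mathcal C}$, I may enlarge $\Pi$ to a rational polyhedral cone still contained in $\mathcal C^+$. Then $G\cdot\Pi\subseteq\mathcal C^+$, because $G$ preserves both $\Lambda$ and $\overline{\mathcal C}$.

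\emph{Step 2: the equality $G\cdot\Pi=\mathcal C^+$.} It suffices to show that every lattice point $v\in\overline{\mathcal C}\cap\Lambda$ lies in $G\cdot\Pi$, and then that $G\cdot\Pi$ is convex. For the first claim, choose a linear form $\xi$ that is positive on $\overline{\mathcal C}\setminus\{0\}$; this exists because $\mathcal C$ is strictly convex. Consider the Looijenga-type argument: for $y\in\operatorname{Int}(\mathcal C)$, the set of lattice points $g^{-1}v$ with $\langle\xi',g^{-1}v\rangle$ bounded is finite. Here one uses the dual cone, and the fact that lattice points of $\overline{\mathcal C}$ on which a fixed interior dual vector is bounded form a finite set. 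Combining this with local finiteness of $\{g\Pi\}$ on $\operatorname{Int}(\mathcal C)$, one shows that each lattice point of the boundary is a limit reached inside a single translate, and hence, by closedness of $g\Pi$, lies in it. Convexity of $G\cdot\Pi$ then follows from a standard argument: segments in $\mathcal C^+$ between points of $G\cdot\Pi$ meet only finitely many translates.

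\emph{Step 3: fundamental domain.} Fix a lattice point $\eta$ in the interior of the dual cone whose stabiliser in $G$ is trivial; if stabilisers are finite, first pass to a finite-index quotient argument. Define the Dirichlet-type domain
\[
\Pi'\coloneqq\{x\in\mathcal C^+ \mid \langle \eta,x\rangle\leq\langle g\eta,x\rangle\ \text{for all } g\in G\}.
\]
Step 2 and the finiteness of $\{g\in G:\ g\Pi\cap\Pi\ne\emptyset\}$ show that only finitely many inequalities are needed on $\Pi$. Hence $\Pi'$ is rational polyhedral, its translates cover $\mathcal C^+$, and they have disjoint interiors.

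\emph{Main obstacle.} I expect the hardest part to be the finiteness statement: that only finitely many $g$ satisfy $g\Pi\cap\Pi\cap\operatorname{Int}(\mathcal C)\neq\emptyset$, and that boundary rays are covered. This is exactly the content of \cite[Proposition 4.1]{Loo14}. In practice, I would cite \cite[Proposition 4.1, Application 4.14 and Corollary 4.15]{Loo14} for Steps 2–3, and \cite[Lemma 3.5]{LZ25} for the passage from a polyhedral to a rational polyhedral $\Pi$.
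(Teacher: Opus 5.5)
Your fallback matches the paper exactly. The paper gives no argument for this theorem beyond the citations \cite[Proposition 4.1, Application 4.14 and Corollary 4.15]{Loo14} and \cite[Lemma 3.5]{LZ25}, so relying on them is fine.

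The self-contained reconstruction, however, does not work as written, and it breaks precisely at $\partial\mathcal C$. In Step 3 you use that $\{g\in G\mid g(\Pi)\cap\Pi\neq\emptyset\}$ is finite, and this is false in general. Suppose $G$ is infinite and fixes a nonzero lattice point $v\in\overline{\mathcal C}$. Then $G\cdot\Pi=\mathcal C^+$ forces $v\in g(\Pi)$ for some $g$, hence $v=g^{-1}(v)\in\Pi$. Consequently $v\in g(\Pi)\cap\Pi$ for \emph{every} $g\in G$.

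This is exactly the situation in the paper's applications. Cremona isometries fix $[K_X]$ by definition. For the surfaces of interest, $[{-}K_X]$ is a nonzero integral nef class with $K_X^2=0$, so it lies on $\partial\Nef(X)$. Meanwhile the group must be infinite as soon as $\Nef^e(X)$ is not rational polyhedral. The usable statement is the one you mention under \emph{main obstacle}, with $\Int(\mathcal C)$ inserted. Establishing that kind of finiteness is the real work behind the rational polyhedrality of the Dirichlet domain, and your sketch does not supply it.

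Step 2 has the same defect. Local finiteness of the translates can at best be used on compact subsets of $\Int(\mathcal C)$. It gives no control as you approach a boundary lattice point. At a point $v$ as above, infinitely many distinct translates $g(\Pi)$ accumulate, so \emph{a limit reached inside a single translate} does not follow. For the same reason your convexity argument fails: every segment through such a $v$ meets every translate of $\Pi$. What is missing is a finiteness statement that controls the translates near $\partial\mathcal C$, and that is the genuine content of \cite[Proposition 4.1]{Loo14}.

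A minor point: enlarging $\Pi$ in Step 1 only yields $G\cdot\Pi=\mathcal C^+$ for the enlarged cone, not for the cone in the statement. So the proposal is a valid proof only through the citations, i.e.\ in the same way as the paper.
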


\section{Cones of divisors on surface Calabi--Yau generalised pairs}\label{sec:CY2}

In this section we prove several results on surface Calabi--Yau generalised pairs which will be crucial in the proof of Theorem \ref{thm:main1} and in \cite{LSX26}.

The following is the main result of this section.

\begin{thm}\label{thm:nefandeffectivecones}
    Let $(X,B+M)$ be a klt Calabi--Yau generalised pair of dimension $2$. Then:
    \begin{enumerate}[\normalfont (a)]
    \item $\Nef(X)^e\subseteq\Nef(X)^+$,
    \item if $M\not\equiv0$, then
    $$ \Eff(X)=\overline{\Eff}(X)\quad\text{and}\quad \Nef(X) = \Nef^e(X)=\Nef^+(X). $$
    \end{enumerate}
\end{thm}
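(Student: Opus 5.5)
The plan is to prove (a) by a purely local polyhedrality argument, and to reduce (b) to showing that every nef class is num-effective, using Riemann--Roch on the terminalisation.

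\emph{Part (a).} Let $x\in\Nef^e(X)$ and write $x=[D]$ with $D=\sum_{i=1}^k a_iD_i$, where $a_i\geq0$ and the $D_i$ are prime divisors. Since $X$ is $\Q$-factorial, every intersection number $D_i\cdot D_j$ is rational. Consider the cone
$$\mathcal P\coloneqq\Big\{\textstyle\sum b_i[D_i]\ \Big|\ b_i\geq0,\ \big(\sum_i b_iD_i\big)\cdot D_j\geq0\ \text{for all } j\Big\}.$$
An effective divisor supported on $\bigcup D_i$ meets every curve outside $\Supp D$ non-negatively. Hence every element of $\mathcal P$ is nef. The cone $\mathcal P$ is cut out by finitely many linear inequalities with rational coefficients inside the rational cone spanned by the $[D_i]$, so it is rational polyhedral. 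It contains $x$, and its extremal rays are spanned by classes of nef $\Q$-divisors, hence by nef line bundles after taking multiples. Therefore $x\in\Nef^+(X)$. This step does not use the Calabi--Yau hypothesis.

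\emph{Part (b): reduction.} The inclusions $\Nef^e(X)\subseteq\Nef(X)$ and $\Nef^+(X)\subseteq\Nef(X)$ are clear, and $\Nef^e(X)\subseteq\Nef^+(X)$ is (a). So it suffices to prove $\Nef(X)\subseteq\Eff(X)$. This also gives $\Eff(X)=\overline{\Eff}(X)$. Indeed, a pseudoeffective $D$ has a Zariski decomposition $D=N+P$ by Lemma \ref{lem:Zariski}, with $N\geq0$ and $P$ nef, and $P$ is num-effective once nef classes are.

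To show that nef classes are num-effective, pass to the terminalisation $f\colon X'\to X$ of Theorem \ref{thm:terminalisation}. Then $X'$ is smooth and ${-}(K_{X'}+B')\equiv f^*M$ is nef and not numerically trivial. Nefness and effectivity push forward by Lemma \ref{lem:pushforward}(b)(d), so I would work on $X'$. There $K_{X'}$ is not pseudoeffective, so $\kappa(X')=-\infty$ and $h^0(X',K_{X'}-L)=0$ for every nef $L\not\equiv0$.

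\emph{Part (b): nef classes of rational type.} Let $P$ be a nef integral class. If $P^2>0$, then $P$ is big and hence effective. If $P^2=0$ and $M\cdot P>0$, Riemann--Roch gives
$$\chi(mP)=\chi(\OO_{X'})+\tfrac m2(B'+f^*M)\cdot P\geq\chi(\OO_{X'})+\tfrac m2\,M\cdot P\to\infty.$$
Since $h^2(mP)=h^0(K_{X'}-mP)=0$, some multiple $mP$ is effective. If $P^2=0$ and $M\cdot P=0$, then $M^2\geq0$, and the Hodge index theorem forces $M^2=0$ and $M\equiv cP$ with $c>0$. So we need ${-}(K_{X'}+B')$ to be num-effective. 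This is the nonvanishing statement for surface generalised pairs, and I would invoke \cite{HanLiu20} for it.

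\emph{Main obstacle: irrational nef classes.} These arguments only handle rational nef classes. The hard part is showing that irrational boundary classes $x$ of $\Nef(X')$ with $x^2=0$ lie in the cone spanned by rational nef classes. I would first use Lemma \ref{lem:rational_Mori_cone}, when $X'$ is rational, to describe $\Nef(X')$ as the part of $\overline{\mathcal C^+_{X'}}$ cut out by the hyperplanes $C^\perp$ for $C\in\mathcal M_{X'}\cup\Comp(B')\cup\Delta^{\nod}_{X',B'}$. Then I would try to show local finiteness of these hyperplanes near any nef $x$ with $x\cdot M>0$, using the bound $({-}K_{X'})\cdot C\leq1$ from Lemma \ref{lem:negativeselfintersection}. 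If this works, $\Nef(X')$ is locally rational polyhedral at such $x$. The remaining degenerate direction $x\in\R_{>0}[M]$ is then already covered by nonvanishing, and the non-rational case would have to be treated separately by a direct ruled-surface analysis.
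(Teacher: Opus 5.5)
Your part (a) is correct and takes a different route from the paper. On a $\Q$-factorial surface, nefness of $\sum b_iD_i$ with $b_i\geq0$ only has to be tested against the $D_j$. This gives a rational polyhedral cone of nef classes containing $x$, and it needs no Calabi--Yau hypothesis. The paper instead reduces to $\Q$-coefficients via Lemma~\ref{lem:nonvanishing}(c) and invokes \cite[Proposition 2.6]{HanLiu20} (Theorem~\ref{thm:nef+}), an argument that also works in higher dimensions.

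\textbf{The gap is in (b).} It concerns nef classes $x$ with $x^2=0$ and $x\cdot M>0$ that are not proportional to rational classes, and your sketched fix does not close it. Grant that only finitely many walls $C^\perp$ meet a neighbourhood of $x$. This only says that near $x$ the cone $\Nef(X')$ equals $\overline{\mathcal C^+_{X'}}$ intersected with finitely many rational half-spaces whose boundaries contain $x$. Since $x^2=0$, the point $x$ lies on the quadric $\partial\mathcal C^+_{X'}$. Such a cone is polyhedral at $x$ only if the curves $C$ with $C\cdot x=0$ span $x^\perp$. If they do not, choose $v\in x^\perp\setminus\R x$ with $v\cdot C=0$ for all these $C$. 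Then for small $t>0$ the classes $x+tv+s(t)h$, with $h$ ample and $s(t)>0$ chosen so that the square vanishes, are nef. They form an uncountable family of pairwise non-proportional isotropic nef classes. So ``locally rational polyhedral'' does not follow from local finiteness.

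Ruling out such a curved boundary is the whole content of (b), not a technicality. By your own part (a) and the Hodge index theorem, an effective nef class $x$ with $x^2=0$ is a positive combination of rational nef classes $y_l$ with $y_l\cdot y_m=0$ for all $l,m$. Hence it is proportional to a rational class. So $\Nef(X')=\Nef^e(X')$ forces every isotropic nef class to be rational up to scaling, which is an abundance-type statement. Local finiteness of walls and Riemann--Roch on integral classes cannot reach irrational classes. The non-rational case is also left open in your sketch.

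The missing ingredient is Lemma~\ref{lem:nonvanishing}(d). For an arbitrary nef $\R$-divisor $L$, the paper applies \cite[Theorem 3.13]{HanLiu20} to the klt generalised pair $\big(X,B+(M+L)\big)$, whose class $K_X+B+M+L\equiv L$ is nef. This shows that if $L$ is not num-semiample, then $L\equiv\alpha M$ for some $\alpha>0$. Moreover, $M$ is num-effective by \cite[Corollary 1.6]{HanLiu20}. Hence every real nef class is num-effective, directly on $X$, with no terminalisation, Riemann--Roch, or rational/non-rational case split.
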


We will prove Theorem \ref{thm:nefandeffectivecones} at the end of the section. We first need several preparatory results.

We start with the following theorem valid in every dimension, whose proof is analogous to that of \cite[Theorem 2.15]{LOP20}.

\begin{thm}\label{thm:nef+}
    Let $(X,B+M)$ be a klt Calabi--Yau generalised pair such that $M$ is a positive linear combination of nef $\Q$-divisors. Then
$$\Nef(X)^e\subseteq\Nef(X)^+.$$
\end{thm}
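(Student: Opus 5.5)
Fix a class $x\in\Nef^e(X)$; we want to show that $x$ lies in the cone spanned by classes of nef line bundles. The argument follows \cite[Theorem 2.15]{LOP20}. Since $x$ is effective, choose an effective $\R$-divisor $D\geq0$ with $[D]=x$ and write $D=\sum_{j=1}^r d_jD_j$ with $d_j>0$ and $D_j$ prime. Consider the finite-dimensional real vector space $W\subseteq\R^r$ of coefficient vectors $(t_1,\dots,t_r)$. The condition ``$\sum t_jD_j$ is nef'' is not obviously polyhedral, so we first reduce to a finitely generated situation by running a suitable MMP.

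The key step uses the generalised pair structure. Write $M=\sum_k \mu_k M_k$ with $\mu_k>0$ and $M_k$ nef $\Q$-divisors. For small $\varepsilon>0$ the pair $(X,B+\varepsilon D)$ is klt, and
$$K_X+B+\varepsilon D+M\equiv \varepsilon D$$
is nef. Consider the rational polyhedral cone $\mathcal P\subseteq W$ of vectors $t\geq0$ (sufficiently small multiples suffice by scaling) such that $\sum t_jD_j$ is nef. I expect to show that $\mathcal P$ is rational polyhedral by a cone-theorem argument for generalised pairs. Since $K_X+B+M\equiv0$ and $M$ is nef, any curve class $\gamma$ with $(\sum t_jD_j)\cdot\gamma<0$ satisfies $(K_X+B+\varepsilon'\sum t_jD_j)\cdot\gamma<0$. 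The cone theorem for the klt pair $(X,B+\varepsilon' D')$, where $D'$ ranges over divisors with bounded coefficients supported on $\bigcup D_j$, then shows that nefness of $\sum t_jD_j$ only needs to be tested against finitely many extremal rational curves. More precisely, the cone theorem gives rational extremal rays $R_i=\R_+[\ell_i]$ with bounded degree $(K_X+B)\cdot\ell_i\geq -2\dim X$. Since $M\cdot\ell_i\geq0$, we get $-(K_X+B)\cdot\ell_i = M\cdot\ell_i$, which bounds $M\cdot\ell_i$. The rationality of the coefficients of $M_k$ is what makes the set of relevant classes $\ell_i$ discrete: $M_k\cdot\ell_i\in\frac1{m}\Z_{\geq0}$ with bounded values. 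Combined with boundedness of $D_j\cdot\ell_i$ in terms of the coefficients, this yields finitely many linear inequalities with rational coefficients cutting out $\mathcal P$ near our point. This is the technical heart, and I would copy the structure of \cite[Theorem 2.15]{LOP20} (a Shokurov-type polytope argument) here.

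Once $\mathcal P$ is known to be a rational polyhedral cone containing the vector $(d_j)$, the vector $(d_j)$ is a positive combination of rational vertices of $\mathcal P$. Each such vertex gives an effective nef $\Q$-divisor, a multiple of which is a nef line bundle (on a $\Q$-factorial $X$, some multiple is Cartier). Hence $x\in\Nef^+(X)$. The main obstacle I expect is the polyhedrality near $(d_j)$, which means controlling infinitely many potential negative extremal curves uniformly. This is exactly where the hypothesis that $M$ is a positive combination of nef $\Q$-divisors, rather than an arbitrary nef $\R$-divisor, is used, via the discreteness of the intersection numbers $M_k\cdot\ell$.
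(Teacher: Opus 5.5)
Your set-up is a genuinely different route from the paper's, but its central step is not proved. The paper chooses $\varepsilon$ with $\big(X,(B+\varepsilon D)+M\big)$ klt and applies \cite[Proposition 2.6]{HanLiu20} directly. That writes $K_X+B+\varepsilon D+M\equiv\varepsilon D$ as a positive combination of nef divisors, so all the polytope work is inside that citation. You instead freeze $B$ and $M$, vary only the coefficients $t$ of $D_t\coloneqq\sum_j t_jD_j$, and use $K_X+B+M\equiv0$ to turn nefness of $K_X+B+\varepsilon D_t+M$ into nefness of $D_t$.

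The gap is the step you call the technical heart: rational polyhedrality of $\mathcal P$ near $d=(d_j)$. The mechanism you sketch would not give it, for three reasons.
\begin{enumerate}[(i)]
\item The length bound for a single boundary $B+\varepsilon D_t$ controls curves only for that $t$. It gives no uniform control as $t$ moves near $d$.
\item Finiteness of the values of $(M_k\cdot\ell_i)_k$ says nothing about the vectors $(D_j\cdot\ell_i)_j$, which are the only data cutting out $\mathcal P$. If $M\equiv0$ your condition is vacuous, yet the statement must still hold.
\item The numbers $D_j\cdot\ell_i$ are not bounded a priori. The ``boundedness of $D_j\cdot\ell_i$ in terms of the coefficients'' is exactly what has to be shown.
\end{enumerate}

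The missing idea is the uniformisation step of Shokurov's argument. Take a box $\Sigma\coloneqq d+[-\rho,\rho]^r$ with $0<\rho<\min_j d_j$, and $\varepsilon$ such that $(X,B+\varepsilon D_t)$ is klt for all $t\in\Sigma$. Let $T$ be the set of extremal rays of $\overline{NE}(X)$ that are $(K_X+B+\varepsilon D_{t'})$-negative for some vertex $t'$ of $\Sigma$. For $R\in T$, pick the vertex minimising $(K_X+B+\varepsilon D_{t'})$ on a fixed generator of $R$, and let $\ell_R$ be the rational curve spanning $R$ given by the length bound there. Then $(K_X+B+\varepsilon D_{t'})\cdot\ell_R\geq-2\dim X$ at every vertex. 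Since $M$ is nef and $K_X+B+M\equiv0$, this gives $D_t\cdot\ell_R\geq-c$ for all $t\in\Sigma$, where $c\coloneqq2\dim X/\varepsilon$. By the cone theorem for $(X,B+\varepsilon D_t)$, for $t\in\Sigma$ the divisor $D_t$ is nef if and only if $D_t\cdot\ell_R\geq0$ for all $R\in T$.

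Discreteness comes from $\Q$-factoriality, not from $M$: if every $ND_j$ is Cartier, then $v_R\coloneqq(D_j\cdot\ell_R)_j\in\frac1N\Z^r$. Evaluating at the vertex $d-\rho\operatorname{sgn}(v_R)$ (componentwise sign) gives $\|v_R\|_1\leq(c+D\cdot\ell_R)/\rho$. Hence $D_t\cdot\ell_R>0$ for all $t\in d+[-\rho/2,\rho/2]^r$ whenever $D\cdot\ell_R>c$. The vectors $v_R$ with $D\cdot\ell_R\leq c$ have $\|v_R\|_1\leq 2c/\rho$, so they form a finite set. Thus near $d$ the nef locus is cut out by finitely many rational half-spaces. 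Your last paragraph then goes through with a small rational polytope around $d$ in place of $\mathcal P$.

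This argument uses only that $M$ is nef. So your closing claim, that the rationality of the $M_k$ is exactly where the hypothesis enters, is a misdiagnosis. In the paper's proof the hypothesis enters only through the application of \cite[Proposition 2.6]{HanLiu20}.
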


\begin{proof}
Let $D$ be an $\R$-divisor on $X$ whose numerical class is in $\Nef(X)^e$. There exists a real number $0<\varepsilon\ll1$ such that $\big(X,(B+\varepsilon D)+M\big)$ is a klt generalised pair. Then by \cite[Proposition 2.6]{HanLiu20} there exist nef divisors $N_1,\dots,N_\ell$ on $X$ and positive real numbers $r_1,\dots,r_\ell$ such that $K_X+(B+\varepsilon D)+M=r_1 N_1+\dots+r_\ell N_\ell$, and note that $K_X+(B+\varepsilon D)+M\equiv\varepsilon D$. This says that the numerical class of the divisor $\varepsilon D$ belongs to $\Nef(X)^+$, which implies the lemma.
\end{proof}

We will often need the following lemma.

\begin{lem}\label{lem:prepared}
    Let $(X,B+M)$ be a klt Calabi--Yau generalised pair of dimension $2$, and assume that there exists an $\R$-Cartier $\R$-divisor $D\geq0$ such that $D\equiv{-}K_X$. Let $D=N+P$ be the Zariski decomposition of $D$. Then $(X,N+P)$ is a klt Calabi--Yau generalised pair.
    
    Moreover, if $D$ is a Weil divisor, then $\Supp N\subseteq \Supp P=\Supp D$.
\end{lem}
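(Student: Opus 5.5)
The plan is to reduce everything to Lemma \ref{lem:Zariski}(c), which compares the negative part $N$ with any decomposition of $D$ into an effective part plus a nef part. The one point needing care is that the generalised pair gives $K_X+B+M\equiv 0$ only up to numerical equivalence, while Lemma \ref{lem:Zariski}(c) asks for an equality $D=B+M$ of divisors.

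\emph{Step 1: rewrite $D$ as effective plus nef.} Set $M'\coloneqq D-B$. This is an $\R$-Cartier $\R$-divisor because $X$ is $\Q$-factorial. From $D\equiv -K_X$ and $K_X+B+M\equiv 0$ we get $M'\equiv M$. Nefness depends only on the numerical class, so $M'$ is nef. Now $D=B+M'$ holds as divisors, with $B\geq0$ and $M'$ nef.

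\emph{Step 2: the generalised pair $(X,N+P)$.} By Lemma \ref{lem:Zariski}(c), Step 1 gives $0\leq N\leq B$. The divisor $K_X+N$ is $\R$-Cartier since $X$ is $\Q$-factorial. Decreasing the boundary of a klt pair only raises discrepancies, so $(X,N)$ is klt because $(X,B)$ is. By Lemma \ref{lem:Zariski}(a), $P$ is a nef $\R$-divisor. Finally,
$$K_X+N+P=K_X+D\equiv0.$$
Hence $(X,N+P)$ is a klt Calabi--Yau generalised pair.

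\emph{Step 3: the case where $D$ is a Weil divisor.} Since $N\geq0$ and $P\geq0$ (Lemma \ref{lem:Zariski}(a),(d)) and $D=N+P$, we have $\Supp N\cup\Supp P=\Supp D$. So it suffices to show that every prime component $C$ of $D$ lies in $\Supp P$. Suppose some $C$ does not. Then $\mult_C N=\mult_C D\geq1$, because $D$ is integral and effective. But $N\leq B$ by Step 2, and $\mult_C B<1$ because $(X,B)$ is klt. This is a contradiction. Therefore $\Supp P=\Supp D$, and $\Supp N\subseteq\Supp D=\Supp P$.

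No step is a real obstacle. The only subtlety is the passage from numerical to actual equality in Step 1, which is handled by replacing $M$ with $M'=D-B$.
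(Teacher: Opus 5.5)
Your proposal is correct and follows the paper's proof essentially verbatim. Both apply Lemma \ref{lem:Zariski}(c) to the decomposition $D=B+(D-B)$, where $D-B\equiv M$ is nef, to obtain $N\leq B$; both then derive the support statement from the coefficients of $N$ being $<1$ while those of $D$ are $\geq 1$. Your explicit introduction of $M'=D-B$ just spells out the step the paper leaves implicit when it writes $M\equiv D-B$.
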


\begin{proof}
The divisors $M$ and $P$ are nef. Thus, since
$$ M\equiv{-}K_X-B\equiv D-B, $$
we have we have $B\geq N$ by Lemma \ref{lem:Zariski}(c). Thus $(X,N)$ is a klt pair since $(X,B)$ is a klt pair, and since clearly $K_X+N+P\equiv0$, the couple $(X,N+P)$ is a klt Calabi--Yau generalised pair. This shows the first statement.

    For the second statement of the lemma, note first that the pair $(X,N)$ is klt by the paragraph above. In particular, for each component $\Gamma$ of $N$ we have $\mult_\Gamma N<1$. Since $\Supp N\subseteq\Supp D$ and $D$ is an integral divisor, we conclude that $\mult_\Gamma P=\mult_\Gamma D-\mult_\Gamma N>0$, which implies that $\Supp N\subseteq \Supp P$. Moreover, this gives that $\Supp D=\Supp(N+P)=\Supp P$, which completes the proof of the lemma.
\end{proof}

The next result, which rests heavily on the results from \cite{HanLiu20}, is instrumental for our paper.

\begin{lem}\label{lem:nonvanishing}
    Let $(X,B+M)$ be a log canonical Calabi--Yau generalised pair of dimension $2$. Then the following statements hold.
	\begin{enumerate}[\normalfont (a)]
	\item The divisor $M$ is num-effective.
	\item We have $\kappa(X,{-}K_X)\geq0$.
	\item There exist $\Q$-divisors $N$ and $P$ on $X$ such that $(X,N+P)$ is a log canonical Calabi--Yau generalised pair. Moreover, if $(X,B+M)$ is klt, then $(X,N+P)$ is klt.
	\item Assume that $(X,B+M)$ is klt and that $M\not\equiv0$. Let $L$ be a nef $\R$-divisor on $X$ which is not num-semiample. Then there exists a positive real number $\alpha$ such that $L\equiv \alpha M$.
	\end{enumerate}	    
   	\end{lem}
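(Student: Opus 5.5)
We prove the four parts in order, relying on the surface results of \cite{HanLiu20} and on Lemma \ref{lem:Zariski}.

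\textbf{Part (a).} Since $(X,B+M)$ is a log canonical generalised pair of dimension $2$ with $K_X+B+M\equiv0$, the Generalised Nonvanishing theorem on surfaces \cite{HanLiu20} applies. We use it in the form that, for a log canonical generalised surface pair, a nef divisor $L$ such that $L-(K_X+B+M)$ is nef is num-effective. Taking $L\coloneqq M$, the difference $M-(K_X+B+M)\equiv M$ is nef, so $M$ is num-effective. If the cited statement only covers $K_X+B+M$ itself, we apply it instead to the log canonical generalised pair $(X,B+2M)$, whose log canonical class is numerically equivalent to $M$.

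\textbf{Part (b).} By (a) there is an $\R$-divisor $M'\geq0$ with $M'\equiv M$. Then ${-}K_X\equiv B+M'$ is num-effective. To pass from num-effectiveness to $\kappa(X,{-}K_X)\geq0$ we must control the torsion part of $\Pic$. If $X$ is rational, or more generally if $q(X)=0$ on a resolution, then numerical and $\Q$-linear equivalence agree for $\Q$-divisors. Otherwise, running a $K_X$-MMP shows that $X$ is birationally ruled over a curve of genus $g\geq1$, or that $K_X\equiv0$. In the latter case $B=0$, $M\equiv0$, and $X$ is a klt surface with numerically trivial canonical class, so abundance gives $K_X\sim_\Q0$. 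In the ruled case we would apply \cite{HanLiu20}, which in fact gives $\kappa$ rather than numerical statements: their result is $\kappa(X,K_X+B+M+\text{stuff})\geq0$ in the $\R$-linear sense. I expect to invoke the \emph{$\R$-linear} version of the nonvanishing from \cite{HanLiu20} directly, namely that ${-}K_X\sim_\R B+M''$ with $M''\geq0$, and then perturb to a $\Q$-divisor. This step is the main subtlety.

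\textbf{Part (c).} By (b), pick $D\geq0$ a $\Q$-divisor with $D\sim_\Q{-}K_X$, and let $D=N+P$ be its Zariski decomposition. By Lemma \ref{lem:Zariski}, $N$ and $P$ are $\Q$-divisors and $P$ is nef. Since $M\equiv D-B$ is nef, Lemma \ref{lem:Zariski}(c) gives $B\geq N$. Hence $(X,N)$ is log canonical (respectively klt) whenever $(X,B)$ is, and $K_X+N+P\equiv0$. This is exactly the argument of Lemma \ref{lem:prepared}.

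\textbf{Part (d).} Let $L$ be nef and not num-semiample. By (c) we may assume $B,M$ are $\Q$-divisors: the klt property is preserved, and the Zariski decomposition above gives $M\equiv P\geq0$. For small $\varepsilon>0$, the pair $(X,B+(M+\varepsilon L))$ is a klt generalised pair with $K_X+B+M+\varepsilon L\equiv\varepsilon L$. By \cite{HanLiu20}, on surfaces the klt generalised abundance-type statement holds whenever the nef part is not numerically trivial along the relevant directions. The key input is their result that if $M+\varepsilon L\not\equiv$ a multiple of $K_X+B+M+\varepsilon L$, then $K_X+B+M+\varepsilon L$ is num-semiample. More concretely, we would argue via Zariski/Hodge index. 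Since $L$ is not num-semiample, $L^2=0$ (if $L^2>0$, Kawamata--Viehweg type basepoint freeness for klt pairs applied to $L\equiv K_X+B+(M+L)$ with $M+L$ big and nef gives semiampleness). Similarly $M^2=0$, or else the same argument applied to $L$ with $M$ big makes $L$ semiample. Finally $L\cdot M=0$, since otherwise $L+M$ is big and nef, and $L\equiv K_X+B+(M+L)$ would again be semiample by the basepoint-free theorem for generalised pairs. Then $L^2=M^2=L\cdot M=0$ with $L,M$ nef and nonzero forces $L\equiv\alpha M$ by the Hodge index theorem, with $\alpha>0$ since both are nef. The main obstacle is justifying the generalised basepoint-free step on surfaces; for this we cite \cite{HanLiu20}.
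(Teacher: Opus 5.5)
Parts (a)--(c) are essentially the paper's proof. In (a), your fallback of applying numerical nonvanishing to $(X,B+2M)$ is the correct formulation.

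In (b), your case analysis via $q(X)$ and a $K_X$-MMP is superfluous. In the case of a surface birationally ruled over a curve of genus $g\geq1$ it could not work from numerical information alone, since numerical and $\R$-linear equivalence differ when $q>0$. The paper does exactly your fallback. By \cite[Corollary 1.6]{HanLiu20} there is an $\R$-divisor $D\geq0$ with ${-}(K_X+B)\sim_\R D$, which gives (a) immediately. Then ${-}K_X\sim_\R B+D\geq0$ gives $\kappa(X,{-}K_X)\geq0$ by the standard perturbation \cite[Proposition 2.5.9]{Fuj17}.

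Your (c) coincides with the paper's, and you rightly note that the argument of Lemma~\ref{lem:prepared} works verbatim in the log canonical case.

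For (d), your first, vague sketch is essentially the paper's citation, but your ``more concrete'' argument is a genuinely different route. The paper applies \cite[Theorem 3.13]{HanLiu20} to the klt generalised pair $\big(X,B+(M+L)\big)$, whose class $K_X+B+M+L\equiv L$ is nef, to get $t\in[0,1]$ with $K_X+B\equiv -t(M+L)$. Then $t\neq1$ because $L$ is not num-semiample, $t\neq0$ because $M\not\equiv0$, and so $L\equiv\frac{1-t}{t}M$.

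Your argument reduces everything to the dichotomy $(L+M)^2>0$ or $(L+M)^2=0$, and it is correct.
\begin{itemize}
\item In the first case no generalised-pair theory is needed, contrary to your ``main obstacle''. Since ${-}(K_X+B)\equiv M$, the class $L-(K_X+B)\equiv L+M$ is nef and big. By Kodaira's lemma, $L+M\equiv A+E$ with $A$ ample and $E\geq0$, so $L\equiv K_X+\Delta$ with $(X,\Delta)$ klt. Log abundance for klt surface pairs with $\R$-boundary then makes $L$ num-semiample.
\item In the second case $L^2=L\cdot M=M^2=0$. The intersection form on a resolution has signature $(1,\rho-1)$, so it has no two-dimensional isotropic subspace. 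Hence $L\equiv\alpha M$, with $\alpha>0$ because $L$ and $M$ are nef and not numerically trivial.
\end{itemize}
The paper's route is a one-line appeal to a structure theorem for nef generalised log canonical classes. Yours is self-contained, uses only classical surface theory, and shows geometrically why the non-num-semiample nef classes lie on the ray of $M$.

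One correction: delete the opening reduction in (d). The claim that the Zariski decomposition gives $M\equiv P$ is false, since $M\equiv{-}(K_X+B)\equiv P+N-B$ and in general $N\neq B$. For instance, take $X=\pr^2$, $B=\frac12\ell$ for a line $\ell$, and $D$ a cubic; then $N=0$ and $M\equiv\frac52\ell\not\equiv P\equiv3\ell$. Replacing $(B,M)$ by $(N,P)$ would therefore prove $L\equiv\alpha P$, not the stated conclusion. Neither your basepoint-free step nor the Hodge index step uses rationality of $B$ or $M$, so simply drop the reduction.
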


\begin{proof}
We first show (a) and (b). Since $M\equiv{-}(K_X+B)$, $M$ is nef and $(X,B)$ is log canonical, we conclude by \cite[Corollary 1.6]{HanLiu20} that there exists an $\R$-divisor $D\geq0$ such that ${-}(K_X+B)\sim_\R D$, which gives (a).

By the previous paragraph we have ${-}K_X\sim_\R B+D\geq0$, and therefore $\kappa(X,{-}K_X)\geq0$ by \cite[Proposition 2.5.9]{Fuj17}, which gives (b).

Next we show (c). By (b) there exists a $\Q$-divisor $G\geq0$ such that $G\sim_\Q{-}K_X$. Let $G=N+P$ be the Zariski decomposition of $G$. Then $(X,N+P)$ is the desired klt Calabi--Yau generalised pair by Lemma \ref{lem:prepared}.

Now we prove (d). The couple $\big(X,B+(M+L)\big)$ is a klt generalised pair, and the divisor $K_X+B+(M+L)\equiv L$ is nef. Thus, by \cite[Theorem 3.13]{HanLiu20} we infer that there exists $t\in[0,1]$ such that
    \begin{equation}\label{eq:1}
        -M \equiv K_X+B \equiv -t(M+L).
    \end{equation}
    Since $L$ is not num-semiample by our assumption, we have $t\neq 1$.
    Now if $t\neq 0$, we obtain $L\equiv \frac{1-t}{t}M$, and we set $\alpha\coloneqq \frac{1-t}{t}$; note that $\alpha$ must then be positive since $L$ and $M$ are both nef. If $t=0$, it follows from \eqref{eq:1} that $M\equiv 0$, a contradiction.
\end{proof}

Now we can prove Theorem \ref{thm:nefandeffectivecones}.

\begin{proof}[Proof of Theorem \ref{thm:nefandeffectivecones}]
For (a), note that by Lemma \ref{lem:nonvanishing}(c) we may assume that $B$ and $M$ are $\Q$-divisors, and we conclude by Theorem \ref{thm:nef+}.

In the remainder of the proof we prove (b). We first show that $\Nef(X) = \Nef^e(X)$. Indeed, let $D$ be a nef $\R$-divisor on $X$. If $D$ is num-semiample, then clearly $[D]\in\Nef^e(X)$. If $D$ is not num-semiample, then $[D]\in\Nef^e(X)$ by Lemma \ref{lem:nonvanishing}(a)(d).

Since $\Nef(X)^e\subseteq\Nef(X)^+$ by (a), and clearly $\Nef(X)^+\subseteq\Nef(X)$, we conclude that $\Nef(X) = \Nef^e(X)=\Nef^+(X)$ by the previous paragraph.

Finally, let $D$ be a pseudoeffective $\R$-divisor on $X$, and let $D=N+P$ be the Zariski decomposition of $D$. Then $N\geq0$ and $P$ is num-effective by the discussion above. Therefore, $[D]\in\Eff(X)$, which shows that $\Eff(X)=\overline{\Eff}(X)$.
\end{proof}

We finish this section with the following lemma which we will need in the proof of Theorem \ref{thm:liftingoneblowup}.

\begin{lem}\label{lem:pushgeneralised}
    Let $(X,B+M)$ be a klt Calabi--Yau generalised pair, and let $f\colon X\to X'$ be a birational morphism to a $\Q$-factorial surface $X'$. Denote $B'\coloneqq f_*B$ and $M'\coloneqq f_*M$. Then $(X',B'+M')$ is a klt Calabi--Yau generalised pair. If $M\not\equiv0$, then $M'\not\equiv0$.
\end{lem}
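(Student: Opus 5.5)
The proof checks the four requirements of Definition \ref{dfn:g-pairs} for $(X',B'+M')$ one at a time: $(X',B')$ is a klt pair, $M'$ is nef, $X'$ is $\Q$-factorial, and $K_{X'}+B'+M'\equiv0$. The last claim, that $M\not\equiv0$ implies $M'\not\equiv0$, is handled separately at the end. Since $X$ is implicitly a surface here, Lemma \ref{lem:pushforward} applies. The space $X'$ is $\Q$-factorial by hypothesis, and $M'=f_*M$ is nef by Lemma \ref{lem:pushforward}(d).

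For the numerical triviality, I push forward the relation $K_X+B+M\equiv0$ using the well-defined map $f_*$ on $N^1$ from Lemma \ref{lem:pushforward}(b). Since $f_*K_X=K_{X'}$, this gives $K_{X'}+B'+M'\equiv0$.

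For the klt property, I compare $K_X+B$ with $f^*(K_{X'}+B')$. Write $K_X+B+F=f^*(K_{X'}+B')$ with $F$ an $f$-exceptional $\R$-divisor. This is possible because $X'$ is $\Q$-factorial, so $K_{X'}+B'$ is $\R$-Cartier. Since $K_X+B\equiv{-}M$ and $K_{X'}+B'\equiv{-}M'$, we get
\[F\equiv f^*(K_{X'}+B')-(K_X+B)\equiv M-f^*f_*M.\]
By Lemma \ref{lem:pushforward}(f) we have $M+E=f^*f_*M$ for some exceptional $E\geq0$, hence $F\equiv{-}E$. The divisor $F+E$ is $f$-exceptional and numerically trivial, so Lemma \ref{lem:pushforward}(c) gives $F=-E\leq0$. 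Therefore
\[K_X+B-E=f^*(K_{X'}+B'),\qquad\text{i.e.}\qquad K_X+(B-E)=f^*(K_{X'}+B').\]
For every divisor $\Gamma$ over $X'$ we then get $a(\Gamma,X',B')=a(\Gamma,X,B-E)\geq a(\Gamma,X,B)>-1$, because subtracting the effective divisor $E$ can only increase discrepancies. Thus $(X',B')$ is klt; here $B'\geq0$ holds automatically. This comparison of discrepancies is the only step needing care: one must note that $F$ has the correct sign, and Lemma \ref{lem:pushforward}(f) together with the uniqueness in (c) supplies exactly this.

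Finally, suppose $M'\equiv0$. Then $f^*f_*M\equiv0$, so $M\equiv{-}E$ with $E\geq0$ exceptional. Then $0\leq M^2=E^2$, while $E^2\leq0$ with equality only if $[E]=0$ by Lemma \ref{lem:pushforward}(e). Hence $[E]=0$, and so $M\equiv0$. This proves the contrapositive of the last claim.
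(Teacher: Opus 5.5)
Your proof is correct, but it takes a different route from the paper's.

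\textbf{The paper's route.} The paper first invokes the nonvanishing result \cite[Corollary 1.6]{HanLiu20}, via the first paragraph of the proof of Lemma \ref{lem:nonvanishing}, to replace $M$ by an effective $\R$-divisor $\R$-linearly equivalent to ${-}(K_X+B)$. This gives $M\geq0$, $M'\geq0$ and $\R$-linear (not merely numerical) triviality of $K_X+B+M$ and $K_{X'}+B'+M'$. From these it obtains $K_X+B\sim_\R f^*(K_{X'}+B')+E$ with $E\geq0$ from Lemma \ref{lem:pushforward}(f). The last claim then becomes an effectivity argument: $M'\equiv0$ and $M'\geq0$ force $M'=0$, so $M+E\sim_\R0$ with both terms effective, whence $M=E=0$.

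\textbf{Your route.} You never leave the level of numerical classes. You use the linear independence of exceptional classes in Lemma \ref{lem:pushforward}(c) to upgrade $F\equiv{-}E$ to the exact identity $K_X+(B-E)=f^*(K_{X'}+B')$, which makes the discrepancy comparison fully explicit. For the last claim you replace effectivity by the negative definiteness in Lemma \ref{lem:pushforward}(e) together with $M^2\geq0$ for nef $M$.

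\textbf{Comparison.} Your argument is more elementary and self-contained: it needs nothing beyond Lemma \ref{lem:pushforward} and avoids the nonvanishing theorem of \cite{HanLiu20} entirely. The paper's version is slightly shorter because it reuses an input already established in Lemma \ref{lem:nonvanishing}, and the $\R$-linear equivalences turn the final step into a one-line positivity argument.
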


\begin{proof}
By the first paragraph of the proof of Lemma \ref{lem:nonvanishing} there exists an $\R$-divisor $D\geq0$ such that ${-}(K_X+B)\sim_\R D$. By replacing $M$ by $D$, and $M'$ by $f_*D$, we may assume that
\begin{equation}\label{eq:inequalities}
M\geq0\quad\text{and}\quad M'\geq0,
\end{equation}
and
\begin{equation}\label{eq:equalities}
K_X+B+M\sim_\R0\quad\text{and}\quad K_{X'}+B'+M'\sim_\R0.
\end{equation}
The $\R$-divisor $M'$ is nef by Lemma \ref{lem:pushforward}(d), hence by Lemma \ref{lem:pushforward}(f) there exists an $f$-exceptional divisor $E\geq0$ such that
\begin{equation}\label{eq:M+E}
f^*M'=M+E.
\end{equation}
Now \eqref{eq:equalities} and \eqref{eq:M+E} yield
$$K_X+B\sim_\R f^*(K_{X'}+B')+E,$$
from which is easily follows that the pair $(X',B')$ is klt.

For the last statement of the lemma, assume that $M'\equiv0$. Since $M'\geq0$ by \eqref{eq:inequalities}, we conclude that $M'=0$, and then \eqref{eq:M+E} yields $M+E\sim_\R0$. Since $M\geq0$ by \eqref{eq:inequalities}, this is possible only if $M=E=0$, as desired.
\end{proof}

\section{Non-rational surface Calabi--Yau generalised pairs}

In this section we prove a characterisation of surface Calabi--Yau generalised pairs which are not rational. It will be essential in the proof of Theorem \ref{thm:main1}. The results of this section generalise \cite[Lemma 1.4]{AM04}, and are interesting in their own right.

\begin{thm}\label{thm:nonrational}
Let $X$ be a smooth projective non-rational surface. Let $B\geq0$ be an $\R$-divisor whose coefficients are smaller than $1$, and let $M\geq0$ be a nef $\R$-divisor. Assume that $K_X+B+M\equiv0$ and $B+M\neq0$. Then:
\begin{enumerate}[\normalfont (a)]
\item $X$ is a ruled surface over a smooth elliptic curve,
\item ${-}K_X$ is nef with $K_X^2=0$,
\item all the components of $B+M$ are smooth disjoint curves, and each of them has self-intersection $0$,
\item the cone $\Nef(X)$ is rational polyhedral, and the classes on the two boundary rays on $\Nef(X)$ are not big.
\end{enumerate}
\end{thm}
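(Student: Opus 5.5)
We start from ${-}K_X\equiv B+M$ with $B,M\geq0$ and $B+M\neq0$. This makes ${-}K_X$ num-effective and nonzero, so $h^0(X,mK_X)=0$ for all $m\geq1$ and $\kappa(X)=-\infty$. Since $X$ is not rational, $X$ is birationally ruled over a smooth curve $Y$ of genus $g\geq1$. Let $\pi\colon X\to Y$ be the ruling and $F$ a fibre. Each component of $B+M$ is then either a fibre component or dominates $Y$.

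\emph{Plan for (a).} A component $\Gamma$ dominating $Y$ has geometric genus at least $g$. To bound $g$, we intersect with a general fibre: $2={-}K_X\cdot F=(B+M)\cdot F$. The key estimate is adjunction on $\Gamma$. We write $(K_X+\Gamma)\cdot\Gamma=2p_a(\Gamma)-2\geq 2g-2$ and compare it with $K_X\cdot\Gamma=-(B+M)\cdot\Gamma$.
\begin{itemize}
\item If $\Gamma$ is a component of $B$ with coefficient $b<1$, then
$$(K_X+\Gamma)\cdot\Gamma=-(B-b\Gamma+M)\cdot\Gamma+(1-b)\Gamma^2.$$
\item If $\Gamma\subseteq\Supp M$ and $\Gamma\not\subseteq\Supp B$, we use $M\cdot\Gamma\geq0$ instead.
\end{itemize}
To handle negativity of $\Gamma^2$, first run relative contractions of $(-1)$-curves in fibres. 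These are not components with coefficient $\geq1$. Alternatively, use $\kappa$-type arguments: $K_X+B+M\equiv0$ with $B$ having coefficients $<1$ forces every dominating curve to satisfy $2g-2\leq0$. This gives $g=1$.

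Then we show $X$ is minimal, i.e.\ a $\mathbb P^1$-bundle over $Y$. If $E$ is a $(-1)$-curve in a fibre, Lemma~\ref{lem:negativeselfintersection}(c) gives $(B+M)\cdot E=1$ and $K_X\cdot E=-1$. We then compute $K_X^2=8(1-g)-r=-r<0$, where $r$ is the number of blowups. This is incompatible with ${-}K_X$ being effective with the structure forced in (c). I expect this minimality step to be the main obstacle. I would argue via the Zariski decomposition of ${-}K_X\equiv B+M$ and Lemma~\ref{lem:negativeselfintersection}: all negative curves are $(-1)$- or $(-2)$-curves meeting $B$ in a controlled way. I would then show that a singular fibre forces some component of $B$ with coefficient $\geq1$.

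\emph{Plans for (b) and (c).} On a ruled surface over an elliptic curve, $K_X^2=0$. Write $X=\mathbb P(\mathcal E)$ with invariant $e$. By \cite[V.2]{Har77}, ${-}K_X\equiv 2C_0+eF$. Effectivity of ${-}K_X$ and the structure of curves force $e\in\{0,-1\}$, or $e\geq0$ with ${-}K_X$ nef. Curves on such surfaces have $\Gamma^2\geq0$, except $C_0$ when $e>0$. A case check via $(K_X+\Gamma)\cdot\Gamma\geq0$ for dominating $\Gamma$ gives the following. We have $0\geq(K_X+\Gamma)\cdot\Gamma=-(B+M-\Gamma)\cdot\Gamma$, up to the coefficient defect. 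Hence $\Gamma^2=0$, $\Gamma$ is smooth elliptic, and $\Gamma$ is disjoint from the other components. Fibre components have square $0$ automatically. Nefness of ${-}K_X$ then follows because ${-}K_X\equiv B+M$ is a sum of curves of self-intersection $0$ that are pairwise disjoint.

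\emph{Plan for (d).} $N^1(X)_\R$ has rank $2$, spanned by $C_0$ and $F$. The nef cone is then a $2$-dimensional cone bounded by $F$ and by $C_0+\max(e,0)F$, or by $C_0+\tfrac e2F$ when $e<0$. The case $e=-1$ gives $2C_0-F$, which is rational. Both boundary rays have square $0$, using $e\in\{0,-1\}$ from (b), so neither is big.
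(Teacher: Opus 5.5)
Your proposal has two genuine gaps, at the two points where the hypotheses (coefficients of $B$ below $1$, $M$ nef) must be used globally.

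\textbf{The genus bound $g=1$.} Your argument does not establish it. In the identity $(K_X+\Gamma)\cdot\Gamma=-(B-b\Gamma+M)\cdot\Gamma+(1-b)\Gamma^2$ the problematic case is $\Gamma^2>0$, not $\Gamma^2<0$. A dominating $\Gamma$ with $\Gamma^2<0$ would have $(K_X+\Gamma)\cdot\Gamma<0$, hence be rational, which is impossible. For $\Gamma^2>0$, however, nothing bounds $(1-b)\Gamma^2$, and the ``$\kappa$-type argument'' just restates the conclusion.

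The missing idea is global. Pass to a minimal model $X'\to T$, with $B'$, $M'$ the pushforwards; $M'$ stays nef and the coefficients of $B'$ stay below $1$. Suppose a curve $D\subseteq X'$ had $D^2<0$. Then $(K_{X'}+B')\cdot D=-M'\cdot D\le0$, so adjunction (using $b<1$ if $D\subseteq\Supp B'$) gives $(K_{X'}+D)\cdot D<0$ and $D\simeq\mathbb P^1$. Hence $D$ lies in a fibre, which is impossible since fibres of $X'$ are irreducible of square $0$. So every curve on $X'$ has nonnegative square, and $8(1-g)=K_{X'}^2=(B'+M')^2\ge0$. This gives $g=1$ and $C_i\cdot C_j=0$ for all components $C_i,C_j$ of $B'+M'$.

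This is also what you need for (b)--(d). It excludes $e>0$, since $C_0$ would then be a negative curve; effectivity of $-K_{X'}$ alone does not exclude it. It also excludes fibres as components, since $K_{X'}\cdot C_i=0$ while $K_{X'}\cdot F=-2$. Your remark that fibre components ``have square $0$ automatically'' does not suffice: as $(B'+M')\cdot F=2$, such a component would meet a dominating one.

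\textbf{Minimality.} This is not proved. You appeal to ``the structure forced in (c)'', but that structure is only available on $X'$, so the argument is circular, and the Zariski-decomposition sketch gives no mechanism. The order must be: first establish the above on $X'$, including smoothness of the $C_i$ via $p_a(C_i)=1$ and the elliptic base. Then rule out the last blowup $h\colon X''\to X'$ at a point $x'$ with exceptional curve $E''$; the pushforwards $B''$, $M''$ of $B$, $M$ satisfy the same hypotheses.

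Write $B''=\widetilde{B'}+\beta E''$ with $\beta\ge0$, and $h^*M'=M''+\mu E''$ with $\mu\ge0$ (Lemma~\ref{lem:pushforward}(f)). Comparing $K_{X''}+B''+M''\equiv0\equiv h^*(K_{X'}+B'+M')$ along $E''$ gives $\mult_{x'}B'=1+\beta-\mu$. This alone gives no contradiction, since a priori $E''$ can be absorbed by $M''$.

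The key step is $\mu=0$. Since $(M')^2=0$ and $M''$ is nef, we get $0\le(M'')^2=(M')^2-\mu^2=-\mu^2$; equivalently, $M''$ is not big and Lemma~\ref{lem:pushforward}(g) applies. Then $\mult_{x'}B'\ge1$. On the other hand, smoothness and disjointness of the components of $B'$ make $\mult_{x'}B'$ a single coefficient of $B'$, which is $<1$. This is the contradiction you would need to supply.
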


\begin{proof}
The logic of the proof is similar to, but somewhat more involved than that of \cite[Lemma 1.4]{AM04}.

Let $f\colon X\to X'$ be a $K_X$-MMP. Since $K_X\equiv {-}(B+M)$ is not pseudoeffective and $X$ is not rational, we conclude that $X'$ is a ruled surface over a smooth curve $T$ of genus $g\geq1$. Let $\pi\colon X'\to T$ be the corresponding morphism. Set $B'\coloneqq f_*B$ and $M'\coloneqq f_*M$, and let $C_1,\dots,C_\ell$ be all the components of $B'+M'$.

We will prove the lemma in several steps.

\medskip

\emph{Step 1.}
In this step we prove:
\begin{enumerate}[\normalfont (i)]
\item $C_i\cdot C_j=0$ for all $i$ and $j$,
\item $K_{X'}\cdot C_i=0$ for all $i$,
\item ${-}K_{X'}$ is nef with $K_{X'}^2=0$,
\item the cone $\Nef(X')$ is rational polyhedral, and the classes on the two boundary rays on $\Nef(X')$ are not big,
\item $g=1$, hence $T$ is a smooth elliptic curve.
\end{enumerate} 

Note first that $K_{X'}+B'+M'\equiv0$ by Lemma \ref{lem:pushforward}(b), hence it is easy to see that (ii) and (iii) follow from (i) and from Kleiman's criterion for nefness. Also, (iv) follows from (iii), since then the cone $\Nef(X')$ is spanned by the numerical classes of ${-}K_{X'}$ and of a fibre of $\pi$. Thus, it suffices to show (i) and (v).

To that end, let $D$ be any curve on $X'$, and assume that $D^2<0$. Since the divisor ${-}(K_{X'}+B')\equiv M'$ is nef by Lemma \ref{lem:pushforward}(d), we have $(K_{X'}+B')\cdot D\leq0$, hence $D\simeq\mathbb{P}^1$ by \cite[Lemma 1.2]{AM04}. Since $D^2<0$, the curve $D$ cannot be a fibre of $\pi$. But this means that $D$ dominates $T$, hence $T$ is also a rational curve, a contradiction since $g\geq1$.

Therefore, each curve on $X'$ has a non-negative self-intersection. In particular, we have $(B'+M')^2\geq0$, with equality if and only if $C_i\cdot C_j=0$ for all $i$ and $j$. Now, since $g\geq1$ and ${-}K_{X'}\equiv B'+M'$, by \cite[Corollary V.2.11]{Har77} we have
$$0\geq 8(1-g)=K_{X'}^2=(B'+M')^2\geq0,$$
which immediately gives (v) and proves (i) by the previous sentence.

\medskip

\emph{Step 2.}
Fix an index $i\in\{1,\dots,\ell\}$. In this step we prove that $C_i$ is smooth.

Let $C_i^\nu$ be the normalisation of $C_i$. As $X'$ is a ruled surface over $T$, the divisor ${-}K_{X'}$ is $\pi$-ample, hence $C_i$ is not a fibre of $\pi$ since $K_{X'}\cdot C_i=0$ by Step 1. Therefore, $C_i^\nu$ dominates $T$.

By Step 1 we have $(K_{X'}+C_i)\cdot C_i=0$, hence $p_a(C_i)=1$ by the adjunction formula. This implies that $p_a(C_i^\nu)\leq1$. If $p_a(C_i^\nu)=0$, then $C_i^\nu$ is a smooth rational curve which dominates $T$ by the previous paragraph, which is a contradiction since $T$ is a smooth elliptic curve by Step 1. Therefore, $p_a(C_i^\nu)=1$, hence $C_i^\nu\simeq C_i$. This shows that $C_i$ is smooth.

\medskip

\emph{Step 3.}
In the remainder of the proof we will show that the map $f$ is an isomorphism. By Steps 1 and 2, this will immediately prove the lemma.

Arguing by contradiction, assume that $f$ is not an isomorphism, and let $h\colon X''\to X'$ be the last step in the $K_X$-MMP $f\colon X\to X'$. Let $B''$ and $M''$ be the strict transforms of $B$ and $M$ on $X''$.

We claim that
\begin{equation}\label{eq:pullbackM}
M''=h^*M'.
\end{equation}
Indeed, we have $(M')^2=0$ by part (i) from Step 1. In particular, since $M'$ is nef, we obtain that $\kappa(X',M')\leq1$. As $h_*M''=M'$, we have $\kappa(X'',M'')\leq\kappa(X',M')\leq1$ by Lemma \ref{lem:pushforward}(a), hence $M''$ is not big. Thus, $(M'')^2=0$ since $M''$ is nef by Lemma \ref{lem:pushforward}(d). We conclude by Lemma \ref{lem:pushforward}(g).

\medskip

\emph{Step 4.}
Finally, in this step we finish the proof that $f$ is an isomorphism. Since $K_X+B+M\equiv0$, we have $K_{X'}+B'+M'\equiv0$ and $K_{X''}+B''+M''\equiv0$ by Lemma \ref{lem:pushforward}(b). This, together with \eqref{eq:pullbackM}, gives 
\begin{equation}\label{eq:908o}
K_{X''}+B''\equiv h^*(K_{X'}+B').
\end{equation}
The map $h$ is the blowup of a point $x'\in X'$, and let $E''\subseteq X''$ be the $h$-exceptional curve. Set $ c\coloneqq \mult_{x'}B'\geq0$, and let $\widetilde B'$ be the strict transform of $B'$ on $X''$. Then $K_{X''}\sim h^*K_{X'}+E''$ and $h^*B'=\widetilde B'+cE''$. Since $B''\geq0$ by assumption, and since $B'=h_*B''$ by construction, there exists a real number $e\geq0$ such that $B''=\widetilde B'+eE''$. Combining all this with \eqref{eq:908o} gives
$$(e+1)E''\equiv cE''.$$
Intersecting both sides of this equation with $E''$ and using $(E'')^2={-}1$ gives $e+1=c$. As $e\geq0$, we conclude that
\begin{equation}\label{eq:contra}
c\geq1.
\end{equation}
In particular, $x'\in\Supp B'$. Since the components of $B'$ are disjoint by Step 1, the point $x'$ belongs to a unique component $C'$ of $B'$. As $C'$ is smooth by Step 2, this yields $ c=\mult_{C'}B' $. But all the coefficients of $B'$ are smaller than $1$, since all the coefficients of $B$ are smaller than $1$ by assumption. This implies $c<1$, which contradicts \eqref{eq:contra} and completes the proof.
\end{proof}

For singular surfaces, we have the following corollary.

\begin{cor}\label{cor:nonrational}
Let $(X,B+M)$ be a klt Calabi--Yau generalised pair of dimension $2$ such that $X$ is not rational, and assume that $B\neq0$ or that $M\not\equiv0$. Then $X$ is a ruled surface over a smooth elliptic curve and the divisor ${-}K_X$ is nef with $K_X^2=0$. Moreover, the cone $\Nef(X)$ is rational polyhedral, and the classes on the two boundary rays on $\Nef(X)$ are not big.
\end{cor}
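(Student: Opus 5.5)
The plan is to reduce to the smooth case treated in Theorem \ref{thm:nonrational} by passing to the $\Q$-factorial terminalisation, and then to show that this terminalisation is an isomorphism.

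First we arrange the hypotheses of Theorem \ref{thm:nonrational}. By Lemma \ref{lem:nonvanishing}(a), and more precisely by the first paragraph of its proof, there is an $\R$-divisor $D\geq0$ with $D\sim_\R{-}(K_X+B)$, hence $D\equiv M$. Replacing $M$ by $D$, we may assume $M\geq0$ nef with $K_X+B+M\equiv0$. This does not change whether $M\not\equiv0$, and if $M\not\equiv0$ then $M\neq0$.

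Next we pass to the terminalisation. Let $f\colon X'\to X$ be the $\Q$-factorial terminalisation of the klt pair $(X,B)$ from Theorem \ref{thm:terminalisation}, so $X'$ is smooth and $K_{X'}+B'\sim_\R f^*(K_X+B)$. The coefficients of $B'$ are $<1$ because $(X',B')$ is klt, and $B'\geq0$ by construction. Put $M'\coloneqq f^*M$; it is nef and effective. Then $K_{X'}+B'+M'\equiv0$, and $X'$ is a smooth non-rational surface. Moreover $B'+M'\neq0$: if $M\neq0$ then $f^*M\neq0$, and if $B\neq0$ then $B'$ contains the strict transform of $B$. So Theorem \ref{thm:nonrational} applies to $(X',B',M')$. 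It gives that $X'$ is ruled over an elliptic curve $T$, that ${-}K_{X'}$ is nef with $K_{X'}^2=0$, and that $\Nef(X')$ is rational polyhedral with non-big boundary rays.

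The key step, and the main obstacle, is to show that $f$ is an isomorphism; it suffices to show that $X'$ contains no curve $C$ with $C^2<0$. I would rerun the argument of Step 1 of the proof of Theorem \ref{thm:nonrational}. Since ${-}(K_{X'}+B')\equiv M'$ is nef, any such $C$ is isomorphic to $\pr^1$ by \cite[Lemma 1.2]{AM04}. Because $T$ is elliptic, such a rational curve cannot dominate $T$, so it lies in a fibre of the ruling $X'\to T$. The proof of Theorem \ref{thm:nonrational} (Steps 3--4) shows that the $K_{X'}$-MMP on $X'$ is an isomorphism, so $X'$ is geometrically ruled. Hence every fibre is irreducible with self-intersection $0$, a contradiction. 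Therefore $f$ has no exceptional curves, and $f$ is an isomorphism since it is birational between normal surfaces.

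Finally, $X\simeq X'$ is smooth, and all the conclusions transfer directly from the previous paragraph: $X$ is ruled over an elliptic curve, ${-}K_X$ is nef with $K_X^2=0$, and $\Nef(X)$ is rational polyhedral with non-big boundary classes.
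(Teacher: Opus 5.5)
Your proposal is correct and follows the paper's reduction: you pass to the $\Q$-factorial terminalisation $f\colon X'\to X$, make $M$ effective via the first paragraph of the proof of Lemma \ref{lem:nonvanishing}, and apply Theorem \ref{thm:nonrational} on the smooth surface $X'$. The paper makes $f^*M$ effective on $X'$ rather than $M$ on $X$, which is immaterial.

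The only real difference is how you show that $f$ is an isomorphism.
\begin{itemize}
\item The paper takes an ample divisor $A$ on $X$. Then $f^*A$ is nef and big, so by Theorem \ref{thm:nonrational}(d) it cannot lie on either boundary ray of the two-dimensional cone $\Nef(X')$. Hence $f^*A$ is ample, and $f$ contracts no curve.
\item You instead rerun Step 1 of the proof of Theorem \ref{thm:nonrational}. A negative curve on $X'$ is rational by \cite[Lemma 1.2]{AM04}. It therefore lies in a fibre of the $\pr^1$-bundle over the elliptic curve, and those fibres are irreducible of self-intersection $0$. So $X'$ has no curves of negative self-intersection, and in particular no $f$-exceptional curves.
\end{itemize}

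Both arguments rest on the same input, namely that $X'$ itself is a $\pr^1$-bundle over an elliptic curve. The paper's version is slightly more economical because it uses only the statement of the theorem. Yours re-derives a piece of its proof, but is equally valid.
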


\begin{proof}
Let $f\colon (\widetilde X,\widetilde B)\to(X,B)$ be the terminalisation of $(X,B)$ as in Theorem \ref{thm:terminalisation}. Then $\widetilde X$ is smooth. Since $f_*\widetilde B=B$, we have $\widetilde B\neq0$ if $B\neq0$.

The pair $(\widetilde X,\widetilde B+f^*M)$ is a klt Calabi--Yau generalised pair, hence by Lemma \ref{lem:nonvanishing}(a) there exists an $\R$-divisor $\widetilde M\geq0$ on $\widetilde X$ such that $\widetilde M\equiv f^*M$. Note that if $M\not\equiv 0$, then $\widetilde M\neq0$ by Lemma \ref{lem:pushforward}(b).

Therefore, by Theorem \ref{thm:nonrational} we have that $\widetilde X$ is a ruled surface over a smooth elliptic curve, the divisor ${-}K_{\widetilde X}$ is nef with $K_{\widetilde X}^2=0$, and the cone $\Nef(\widetilde X)$ is rational polyhedral such that the classes on the two boundary rays on $\Nef(\widetilde X)$ are not big.

It thus suffices to show that $f$ is an isomorphism. To that end, pick an ample divisor $A$ on $X$. Then the divisor $f^*A$ is nef and big on $\widetilde X$, hence its numerical class does not lie on the boundary of $\Nef(\widetilde X)$ by the previous paragraph. Thus, $f^*A$ is ample, hence $f$ is an isomorphism, as desired.
\end{proof}

\begin{rem}
We note that there exist surfaces as in Theorem \ref{thm:nonrational} and Corollary \ref{cor:nonrational}. Indeed, as in \cite[Example 1.7]{DPS94}, let $E$ be an elliptic curve, let $\mathcal E$ be a non-split vector bundle on $E$, and set $X\coloneqq \mathbb P(\mathcal E)$. If $C$ is the section of the projection $X\to E$ corresponding to the quotient $\mathcal E\to \OO_E$, then $C$ is a nef divisor on $X$ such that $K_X\sim{-}2C$.
\end{rem}

\section{Cremona isometries}\label{sec:Cremona}

In this section we prove that in the particular situations of this paper, the group of Cremona isometries lifts or descends under a birational morphism of smooth surfaces. The following is the main result of this section.

\begin{thm}\label{thm:cremonaiso}
Let $f\colon X\to X'$ be a birational morphism between $\Q$-factorial projective surfaces. Let $D$ be an effective divisor on $X$, set $D'\coloneqq f_*D$, and assume that $\Exc(f)\subseteq\Supp D$.\footnote{Note that the exceptional locus of $f$ is purely divisorial by \cite[1.40]{Deb01}.} Then the following statements hold.
\begin{enumerate}[\normalfont (a)]
\item There exists a well-defined group monomorphism
    $$\theta\colon\Cr(X,D) \to \Cr(X',D'),\quad \sigma\mapsto f_*\circ \sigma \circ f^*.$$
\item Assume, additionally, that $X'$ and $X$ are smooth and rational, that $\overline{\Eff}(X)=\Eff(X)$, and that ${-}(K_X+D)$ is nef. If $D$ is connected or if $\Delta_{X,D}^{\nod}=\emptyset$,\footnote{Recall the definition of the set $\Delta_{X,D}^{\nod}$ from Section \ref{sec:prelim}.} then $\theta$ is an isomorphism, and the inverse map is given by
    $$\theta^{-1}\colon\Cr(X',D') \to \Cr(X,D),\quad \rho\mapsto f^*\circ\rho\circ f_*+\id-f^*\circ f_* , $$
where $\id$ is the identity on $N^1(X)_\R$.
\end{enumerate}
\end{thm}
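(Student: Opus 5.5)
The plan is to use the decomposition $N^1(X)_\R=f^*N^1(X')_\R\oplus V$ of Lemma \ref{lem:pushforward}(c), where $V$ is spanned by the classes of the exceptional curves $E_1,\dots,E_k$. By hypothesis each $E_i$ is a component of $D$, so every $\sigma\in\Cr(X,D)$ fixes each $[E_i]$ and hence acts trivially on $V$. Since $\sigma$ is an isometry and $f^*N^1(X')_\R=V^\perp$ (by Lemma \ref{lem:pushforward}(c),(e) and the projection formula), $\sigma$ also preserves $f^*N^1(X')_\R$. Consequently $f_*\circ\sigma\circ f^*$ is simply $\sigma$ restricted to $f^*N^1(X')_\R\cong N^1(X')_\R$.

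For (a) I will check the four axioms of Definition \ref{dfn:cremona} and the compatibility with components of $D'$.
\begin{enumerate}[\normalfont (i)]
\item Isometry is immediate, since $f^*$ is an isometric embedding.
\item $[K_{X'}]$ is fixed because $K_X=f^*K_{X'}+e$ with $e\in V$. Hence $\sigma$ fixes $f^*[K_{X'}]$, as it fixes $K_X$ and $e$.
\item The components of $D'$ are pushforwards of non-exceptional components $D_j$ of $D$. Writing $[D_j]=f^*[D_j']+e_j$ shows that $\sigma$ fixes $f^*[D'_j]$.
\item Effectivity holds since $f_*\sigma f^*\Eff(X')\subseteq f_*\Eff(X)\subseteq\Eff(X')$ by Lemma \ref{lem:pushforward}(b). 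Applying the same to $\sigma^{-1}$ gives equality.
\item The lattice $N^1_W$ is preserved: a Weil divisor $L'$ on $X'$ is $f_*L$ for its strict transform $L$, and $f_*\sigma f^*[L']=f_*\sigma([L]-e)=f_*\sigma[L]$ lies in $f_*N^1_W(X)\subseteq N^1_W(X')$. Applying the same to $\sigma^{-1}$ gives surjectivity.
\end{enumerate}
The map is a homomorphism because $f^*f_*$ is the projection onto $f^*N^1(X')$ along $V$, and this projection commutes with $\sigma$. It is injective because $\sigma$ is determined by its restrictions to $f^*N^1(X')$ and to $V$, the latter being the identity.

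For (b), define $\tau\coloneqq f^*\rho f_*+(\id-f^*f_*)$, i.e.\ $\rho$ on $f^*N^1(X')_\R$ and the identity on $V$. One checks formally that $\theta(\tau)=\rho$, that $\tau$ is an isometry fixing $[K_X]$ and each $[E_i]$, and that it fixes the non-exceptional components of $D$, using $[D_j]=f^*[D_j']+e_j$. On smooth surfaces $N^1_W=N^1$ splits integrally as $f^*N^1(X')\oplus\bigoplus\Z[E_i]$, since $f$ is a sequence of point blowups; hence the lattice is preserved.

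The main obstacle is showing $\tau(\Eff(X))\subseteq\Eff(X)$. Since $\Eff(X)=\overline{\Eff}(X)$, it suffices by Lemma \ref{lem:rational_Mori_cone} (with $B=D$) to show that $\tau$ maps $\overline{\mathcal C_X^+}$, $\mathcal M_X$, $\Comp(D)$ and $\Delta^{\nod}_{X,D}$ into $\overline{\Eff}(X)$.
\begin{itemize}
\item $\tau$ preserves $\mathcal C_X^+$, as an isometry fixing $f^*$ of an ample class's component in a connected sense, or by continuity and connectedness.
\item $\Comp(D)$ is fixed.
\item For $[E]\in\mathcal M_X$, the image $\tau[E]$ has $E^2=K\cdot E=-1$, so by Lemma \ref{lem:rationalsurface-1curve} it suffices to show $\tau[E]\cdot H>0$ for some ample $H$. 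I would take $H=f^*H'-\sum\varepsilon_iE_i$. Then $\tau[E]\cdot H=\rho(f_*[E])\cdot H'+(\text{small})$, and positivity follows when $f_*E\neq0$, because $\rho$ preserves effectivity and $\rho$ is a Cremona isometry. When $E$ is exceptional it is fixed.
\item The internal $(-2)$-curves are the delicate case. For $C$ with $[C]\in\Delta^{\nod}_{X,D}$, $C$ is disjoint from $D\supseteq\Exc(f)$, so $[C]=f^*[f_*C]$ and $f_*C\in\Delta^{\nod}_{X',D'}$. By Lemma \ref{lem:sigmanodal} applied on $X'$ (where $-(K_{X'}+D')$ is nef after pushforward), $\rho$ sends it to the class of some internal curve $C''$ on $X'$. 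I then need $C''$ to avoid the images of exceptional curves, so that $f^*[C'']$ is the class of its strict transform, which is effective. This is where connectedness of $D$ or $\Delta^{\nod}=\emptyset$ enters. If $\Delta^{\nod}_{X,D}=\emptyset$ the case is vacuous. If $D$ is connected, $C''$ is disjoint from $\Supp D'$, which contains $f(\Exc f)$ since $\Exc f\subseteq\Supp D$ and $D$ is connected; hence $f^*C''$ is its strict transform.
\end{itemize}

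Finally, apply the same argument to $\rho^{-1}$ to obtain equality of cones.
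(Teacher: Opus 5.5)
Your proposal is correct and is essentially the paper's proof. In (a) you use the same splitting $N^1(X)_\R=f^*N^1(X')_\R\oplus V$ with $\sigma$ preserving $f^*N^1(X')_\R=V^{\perp}$. In (b) you use the same inverse $f^*\circ\rho\circ f_*+\id-f^*\circ f_*$, checked on the generators of Lemma \ref{lem:rational_Mori_cone}.

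One step needs tightening. For $\mathcal C_X^+$, continuity and connectedness alone do not rule out $\tau$ swapping the two sheets of $\{x^2>0\}$. Make it precise as the paper does, by exhibiting one class that lands in $\mathcal C_X^+$: for instance $\tau(f^*h')=f^*\rho(h')$ with $h'$ ample, where $\rho(h')$ is ample by Remark \ref{rem:duality}.

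For internal $(-2)$-curves you do not need $C''$ to avoid $f(\Exc f)$. From $[C]=f^*[f_*C]$ you get $\tau([C])=f^*\rho([f_*C])\in f^*\Eff(X')\subseteq\Eff(X)$ directly. So mapping these generators into $\Eff(X)$ does not actually require $D$ to be connected. The paper uses connectedness only to prove the stronger statement that $\Delta^{\nod}_{X,D}$ is preserved.
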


This theorem is a consequence of the following two lemmas. In them, we carefully keep track of the assumptions which ensure that certain properties lift or descend.

\begin{lem}\label{lem:isometries}
Let $f\colon X\to X'$ be a birational morphism between $\Q$-factorial projective surfaces. Let $E_1,\dots,E_k$ be all the $f$-exceptional prime divisors on $X$ and set $ V\coloneqq \sum_{i=1}^k \R [E_i] \subseteq N^1(X)_\R $. Let $\sigma\in\GL\big( N^1(X)_{\R} \big)$ and assume that $\sigma(V)=V$. Set
$$ \sigma'\coloneqq f_*\circ \sigma \circ f^*. $$
Then the following statements hold.
\begin{enumerate}[\normalfont (a)]
\item We have $\sigma'\in\GL\big( N^1(X')_{\R} \big)$.
\item Let $x\in N^1(X)_{\R}$ be such that $\sigma(x)=x$. Then $\sigma'(f_*x)=f_*x$.
\item If $\sigma\big([K_X]\big)=[K_X]$, then $\sigma'\big([K_{X'}]\big)=[K_{X'}]$.
\item If $\sigma$ preserves the cone $\Eff(X)$, then $\sigma'$ preserves the cone $\Eff(X')$.
\end{enumerate}
Assume, furthermore, that $\sigma(v)=v$ for each $v\in V$, and that $\sigma$ preserves the intersection product on $ N^1(X)_{\R} $. Then the following statements hold.
\begin{enumerate}[{\normalfont (e)}]
\item We have $\sigma\big(f^*N^1(X')_\R\big)\subseteq f^*N^1(X')_\R$.
\item[\normalfont{(f)}] If $\sigma$ preserves the cone $\Nef(X)$, then $\sigma\big(f^*\Nef(X')\big)\subseteq f^*\Nef(X')$.
\item[\normalfont{(g)}] The operator $\sigma'$ preserves the intersection product on $ N^1(X')_{\R} $.
\item[\normalfont{(h)}] If $\sigma$ preserves the lattice $N^1_W(X)$, then $\sigma'$ preserves the lattice $N^1_W(X')$.
\item[\normalfont{(i)}] Set $E\coloneqq E_1+\dots+E_k$. Then the group homomorphism
    $$\theta\colon\Cr(X,E) \to \GL\big( N^1(X')_{\R} \big),\quad \sigma\mapsto f_*\circ \sigma \circ f^*$$
	is well defined and injective. Moreover, for any effective divisor $D$ on $X$ we have $\theta\big(\Cr(X,D+E)\big)\subseteq\Cr(X',f_*D)$.
\end{enumerate}
\end{lem}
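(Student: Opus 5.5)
The plan is to work entirely with the decomposition $N^1(X)_\R=f^*N^1(X')_\R\oplus V$ from Lemma \ref{lem:pushforward}(c), together with the identities $f_*f^*=\id$ on $N^1(X')_\R$ and $f_*(V)=0$.

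\emph{Parts (a)--(d).} For (a), we check that $(f_*\circ\sigma^{-1}\circ f^*)\circ\sigma'=\id$. Take $y\in N^1(X')_\R$ and write $\sigma(f^*y)=f^*f_*\sigma f^*y+e$ with $e\in V$. Since $\sigma(V)=V$, we get $\sigma^{-1}(e)\in V$, so
\[
f_*\sigma^{-1}f^*\sigma'(y)=f_*\sigma^{-1}\bigl(\sigma f^*y-e\bigr)=y-f_*\sigma^{-1}(e)=y.
\]
For (b), write $x=f^*f_*x+e$ with $e\in V$. Then $\sigma'(f_*x)=f_*\sigma(x-e)=f_*x-f_*\sigma(e)=f_*x$, again because $\sigma(e)\in V$. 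Part (c) is (b) with $x=[K_X]$, since $f_*K_X=K_{X'}$. For (d), note that $f^*\Eff(X')\subseteq\Eff(X)$ and $f_*\Eff(X)\subseteq\Eff(X')$ by Lemma \ref{lem:pushforward}(b). Applying the same argument to $\sigma^{-1}$, whose pushed-down map is the inverse of $\sigma'$ by (a), gives equality.

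\emph{Parts (e)--(h).} Now assume that $\sigma$ fixes $V$ pointwise and is an isometry. By the Negativity lemma and Lemma \ref{lem:pushforward}(c), $f^*N^1(X')_\R=V^\perp$. An isometry fixing $V$ preserves $V^\perp$, which gives (e). Consequently $\sigma f^*=f^*\sigma'$ for the induced map. Part (g) then follows from $f^*a\cdot f^*b=a\cdot b$.

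For (f), recall that $f^*\Nef(X')=\Nef(X)\cap V^\perp$. Indeed, if $f^*y$ is nef, then $y=f_*f^*y$ is nef by Lemma \ref{lem:pushforward}(d). Since $\sigma$ preserves both $\Nef(X)$ and $V^\perp$, the claim follows.

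For (h), take a Weil class $w\in N^1_W(X')$. Its strict transform $\tilde w$ lies in $N^1_W(X)$ and satisfies $f_*\tilde w=w$. We then get
\[
\sigma'(w)=f_*\sigma(f^*f_*\tilde w)=f_*\sigma(\tilde w-e)=f_*\sigma(\tilde w)-f_*e=f_*\sigma(\tilde w).
\]
This lies in $N^1_W(X')$, because pushforward of Weil divisors is well defined on numerical classes (Lemma \ref{lem:pushforward}(b)). Applying the same argument to $\sigma^{-1}$ gives surjectivity.

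\emph{Part (i).} Elements of $\Cr(X,E)$ fix each $[E_i]$, so they fix $V$ pointwise and the hypotheses of (e)--(h) hold. From $\sigma f^*=f^*\sigma'$ we get that $\theta$ is multiplicative:
\[
(\sigma\tau)'=f_*\sigma\tau f^*=f_*\sigma f^*\tau'=f_*f^*\sigma'\tau'=\sigma'\tau'.
\]
For injectivity, suppose $\sigma'=\id$. Then $\sigma$ is the identity on $f^*N^1(X')_\R$ and on $V$, hence on all of $N^1(X)_\R$.

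It remains to show that $\theta(\sigma)\in\Cr(X',f_*D)$ for $\sigma\in\Cr(X,D+E)$. The Cremona conditions (i)--(iv) on $X'$ follow from (h), (g), (c) and (d). For each component $D_j$ of $D$ that is not exceptional, $[f_*D_j]$ is fixed by (b), since $\sigma$ fixes $[D_j]$. These are exactly the components of $f_*D$.

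I expect the main obstacle to be (h). There one must check carefully that the strict transform and the pushforward respect $N^1_W$ on merely $\Q$-factorial surfaces, and that the ambiguity in $V$ is harmless; this is exactly where the identity $f_*(V)=0$ is used.
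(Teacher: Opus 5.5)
Your proposal is correct and follows essentially the same route as the paper. Both use the decomposition $N^1(X)_\R=f^*N^1(X')_\R\oplus V$, negative definiteness on $V$ to get $\sigma(f^*N^1(X')_\R)\subseteq f^*N^1(X')_\R$ (your $V^\perp$ formulation is the paper's computation $e^2=0$ repackaged), and the strict-transform argument for (h). Your extra checks fill in points the paper leaves implicit: the left inverse $f_*\circ\sigma^{-1}\circ f^*$ showing $(\sigma^{-1})'=(\sigma')^{-1}$, which gives equality rather than mere inclusion in (d) and (h), and the verification that $\theta$ is multiplicative.
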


\begin{proof}
Clearly $\sigma'$ is an endomorphism of $N^1(X)_\R$, hence for (a) it suffices to show that it is an injective map. To that end, let $x'\in N^1(X')_\R$ be such that $\sigma'(x')=0$. In other words, $ f_*\big(\sigma(f^*x')\big)=0 $. By Lemma \ref{lem:pushforward}(c) this implies that $\sigma(f^*x')\in V$. By the assumption of the lemma, this implies $f^*x'\in V$, and pushing this forward by $f_*$ gives $x'=0$. Thus, $\sigma'$ is injective.

For (b), note that $f_*x=f_*\big(\sigma(x)\big)$ as $\sigma(x)=x$. Also, $f^*f_*x-x\in V$ by Lemma \ref{lem:pushforward}(c), hence $\sigma(f^*f_*x-x)\in V$ as $\sigma(V)=V$. Therefore,
\begin{align*}
\sigma'(f_*x)-f_*x&=f_*\big(\sigma(f^*f_*x)\big)-f_*\big(\sigma(x)\big)\\
&=f_*\big(\sigma(f^*f_*x-x)\big)\in f_*(V)=\{0\}.
\end{align*}
This gives (b), and part (c) follows from (b) since $f_*[K_X]=[K_{X'}]$.

For (d), let $x'\in\Eff(X')$. Then $f^*x'\in\Eff(X)$, thus $\sigma(f^*x')\in\Eff(X)$ by assumption. Therefore, Lemma \ref{lem:pushforward}(b) gives
$$ \sigma'(x')=f_*\big(\sigma(f^*x')\big)\in f_*\Eff(X)\subseteq\Eff(X'), $$
which was to be shown.

We now show (e) and (f). Let $x'\in N^1(X')_\R$. By Lemma \ref{lem:pushforward}(c) there exists $y'\in N^1(X')_\R$ and $e\in V$ such that $\sigma(f^*x')=f^*y'+e$. By assumption we have $\sigma(e)=e$, and $\sigma$ preserves the intersection product on $ N^1(X)_{\R} $. Therefore, by the projection formula and by Lemma \ref{lem:pushforward}(e) we obtain
$$0=f^*x'\cdot e=\sigma(f^*x')\cdot\sigma(e)=(f^*y'+e)\cdot e=e^2\leq0.$$
This shows that $e^2=0$, and hence $e=0$ again by Lemma \ref{lem:pushforward}(e). Thus,
\begin{equation}\label{eq:sigmanef}
\sigma(f^*x')=f^*y',
\end{equation}
which proves (e). If, moreover, $x'$ is nef and if $\sigma$ preserves $\Nef(X)$, then $\sigma(f^*x')$ is also nef. Therefore, $f^*y'\in\Nef(X)$ by \eqref{eq:sigmanef}, and consequently, $y'\in\Nef(X')$, which proves (f).

Next we show (g). Fix $x',y'\in N^1(X')_\R$. By (e) there exist $z',w'\in N^1(X')_\R$ such that
$$\sigma(f^*x')=f^*z' \quad\text{and}\quad\sigma(f^*y')=f^*w'.$$
Therefore, by these relations, by the projection formula and since $\sigma$ preserves the intersection product on $ N^1(X)_{\R} $, we obtain
\begin{align*}
x'\cdot y' &= f^*x'\cdot f^*y' = \sigma(f^*x') \cdot \sigma(f^*y') = f^*z'\cdot f^*w' = z'\cdot w'\\
&=f_*f^*z'\cdot f_*f^*w' = f_*\big(\sigma(f^*x')\big) \cdot f_*\big(\sigma(f^*y')\big)=\sigma'(x')\cdot\sigma'(y'),
\end{align*}
as desired.

Next we show (h). Let $G'$ be a Weil divisor on $X'$, and let $G$ be the strict transform of $G'$ on $X$. Since $X'$ is $\Q$-factorial, we have $f^*G'=G+F$, where $F$ is an exceptional $\Q$-divisor on $X$. By assumption we have $\sigma\big([F]\big)=[F]$, and there exists a Weil divisor $G_1$ on $X$ such that $\sigma\big([G]\big)=[G_1]$. Therefore,
\begin{align*}
\sigma'\big([G']\big)&=f_*\big(\sigma\big(f^*[G']\big)\big)=f_*\big(\sigma\big([G]+[F]\big)\big)\\
&=f_*\big([G_1]+[F]\big)=f_*[G_1]=[f_*G_1]\in N^1_W(X'),
\end{align*}
as desired.

Finally, we prove (i). The map $\theta$ is well defined by (a). To show that $\theta$ is injective, let $\sigma\in\Cr(X,E)$ be such that $\theta(\sigma)$ is the identity on $N^1(X')_\R$. Then for any $x'\in N^1(X')_\R$ we have
$$ x'=\theta(\sigma)(x')=f_*\big(\sigma(f^*x')\big). $$
By (e) there exists $y'\in N^1(X')_\R$ such that $\sigma(f^*x')=f^*y'$, and hence the equation above gives
$$x'=f_*f^*y'=y'.$$
Thus, $\sigma(f^*x')=f^*x'$ for all $x'\in N^1(X')_\R$, hence $\sigma$ is the identity on the subspace $f^*N^1(X')_\R\subseteq N^1(X)_\R$. Since $\sigma$ is the identity on the subspace $V\subseteq N^1(X)_\R$ by assumption, we conclude that $\sigma$ is the identity on $N^1(X)_\R$ by Lemma \ref{lem:pushforward}(c). Thus, $\theta$ is injective. The last statement of (i) follows from (b), (c), (d), (g) and (h).
\end{proof}

For the following lemma, recall the definitions of sets $\mathcal C_X^+$, $\mathcal M_X$ and $\Delta_{X,D}^{\nod}$ from Section \ref{sec:prelim}.

\begin{lem}\label{lem:isometriesdownup}
Let $f\colon X\to X'$ be a birational morphism between $\Q$-factorial projective surfaces. Let $\rho'\in\GL\big( N^1(X')_{\R} \big)$, and set
$$ \rho\coloneqq f^*\circ\rho'\circ f_*+\id-f^*\circ f_* , $$
where $\id$ is the identity on $N^1(X)_\R$. Then the following statements hold.
\begin{enumerate}[\normalfont (a)]
\item We have $\rho\in\GL\big( N^1(X)_{\R} \big)$.
\item Let $x\in N^1(X)_{\R}$. If $\rho'(f_*x)=f_*x$, then $\rho(x)=x$.
\item If $\rho'\big([K_{X'}]\big)=[K_{X'}]$, then $\rho\big([K_X]\big)=[K_X]$.
\item If $X$ and $X'$ are smooth, and if $\rho'$ preserves the lattice $N^1(X')$, then $\rho$ preserves the lattice $N^1(X)$.
\end{enumerate}
Assume, furthermore, that $\rho'$ preserves the intersection product on $ N^1(X')_{\R} $. Then the following statements hold.
\begin{enumerate}[{\normalfont (e)}]
\item The operator $\rho$ preserves the intersection product on $ N^1(X)_{\R} $.
\end{enumerate}
Assume, furthermore, that there exists an ample class $h'\in N^1(X')$ such that $\rho'(h')$ is ample.
\begin{enumerate}[{\normalfont (f)}]
\item[\normalfont{(f)}] There exists an ample class $h\in N^1(X)$ such that $\rho(h)$ is ample.
\item[\normalfont{(g)}] If $X$ is smooth, we have $\rho(\mathcal C_X^+)=\mathcal C_X^+$.
\end{enumerate}
Assume, furthermore, that $X$ and $X'$ are smooth and that ${-}K_X$ is pseudoeffective.
\begin{enumerate}[{\normalfont (h)}]
\item If $X$ is rational, if $\rho'\big([K_{X'}]\big)=[K_{X'}]$, and if $\rho'$ preserves the lattice $N^1(X')$, then $\rho(\mathcal M_X)=\mathcal M_X$.
\item[\normalfont{(i)}] Let $D\geq0$ be a connected $\R$-divisor on $X$ such that ${-}(K_X+D)$ is nef. Assume that $\Exc(f)\subseteq\Supp D$ and that $\rho'\in \Cr(X',D')$.
Then $\rho(\Delta_{X,D}^{\nod})=\Delta_{X,D}^{\nod}$.
\end{enumerate}
\end{lem}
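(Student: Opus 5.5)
The whole argument rests on the decomposition $N^1(X)_\R=f^*N^1(X')_\R\oplus V$ from Lemma \ref{lem:pushforward}(c). On $f^*N^1(X')_\R$ the operator $\rho$ acts as $f^*x'\mapsto f^*\rho'(x')$, since $f_*f^*=\id$. On $V$ it is the identity, since $f_*V=0$.

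\textbf{Parts (a)--(f).}
\begin{itemize}
\item[(a)] $\rho$ is the block-diagonal operator $\rho'\oplus\id$ in the decomposition above, so it is invertible.
\item[(b)] Write $x=f^*f_*x+e$ with $e\in V$. Then $\rho(x)=f^*\rho'(f_*x)+e=f^*f_*x+e=x$.
\item[(c)] Apply (b) to $x=[K_X]$, using $f_*[K_X]=[K_{X'}]$.
\item[(d)] When $X$ and $X'$ are smooth, $N^1(X)=f^*N^1(X')\oplus\bigoplus\Z[E_i]$, because $f$ is a composition of blowups and the exceptional classes are integral. Hence $\rho$ preserves the lattice.
\item[(e)] The two summands are orthogonal by the projection formula, and $f^*$ is an isometry onto its image. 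So $\rho$ is an isometry.
\item[(f)] Take $h=f^*h'-\varepsilon\sum a_iE_i$ with $\sum a_iE_i$ chosen so that $-\sum a_iE_i$ is $f$-ample, for example $a_i>0$ with $-\sum a_iE_i$ relatively ample. Then $h$ is ample for small $\varepsilon$. We have $\rho(h)=f^*\rho'(h')-\varepsilon\sum a_iE_i$, which is ample by the same reasoning since $\rho'(h')$ is ample. Make $\varepsilon$ rational so that $h$ is an integral class after scaling.
\end{itemize}

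\textbf{Parts (g) and (h).}
\begin{itemize}
\item[(g)] $\rho$ preserves $\{x^2>0\}$ by (e). This set has two connected components, and $\rho$ is continuous. By (f), $\rho$ maps the ample class $h\in\mathcal C_X^+$ into $\mathcal C_X^+$. Therefore $\rho(\mathcal C_X^+)=\mathcal C_X^+$.
\item[(h)] Use the description of $\mathcal M_X$ in Lemma \ref{lem:rationalsurface-1curve}(b). By (c), (d) and (e), $\rho$ preserves integral classes $E$ with $E^2=K_X\cdot E=-1$. It remains to show that $\rho(E)\cdot H>0$ for some ample $H$. Apply (g): $\rho(E)$ is not in $\mathcal C^+_X$, so argue instead as follows. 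If $\rho(E)\cdot H\le0$ for an ample $H$, then $-\rho(E)$ satisfies $(-\rho(E))\cdot H\ge0$. Combining this with $\rho(E)^2=-1$, $K_X\cdot\rho(E)=-1$ and Riemann--Roch, we get $h^0(K_X+\rho(E))$ or $h^0(-\rho(E))$ nonzero, which is impossible given $-K_X$ pseudoeffective. A cleaner route is to pair with the ample class $\rho(h)$: $E\cdot h>0$ and $\rho$ is an isometry, so $\rho(E)\cdot\rho(h)=E\cdot h>0$, and $\rho(h)$ is ample by (f). Hence $\rho(E)\in\mathcal M_X$. The same argument applied to $\rho^{-1}$, which is built from $\rho'^{-1}$ (and $\rho'^{-1}$ inherits the hypotheses), gives equality.
\end{itemize}

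\textbf{Part (i): the main obstacle.} Let $C$ be an internal $(-2)$-curve, so $C$ is disjoint from $\Supp D\supseteq\Exc(f)$. Then $[C]=f^*[f_*C]$ and $f_*C$ is a $(-2)$-curve disjoint from $\Supp D'$. Moreover, since $D$ is connected, $f_*C$ is not a component of $D'$.

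Hence $\rho(C)=f^*\rho'(f_*C)$. By Lemma \ref{lem:sigmanodal} applied on $X'$, with $-(K_{X'}+D')$ nef by Lemma \ref{lem:pushforward}(d), the class $\rho'(f_*C)$ is the class of an internal $(-2)$-curve $C'$ on $X'$. The key step is to show that $C'$ avoids the points $f(\Exc f)$, so that $f^*C'$ equals the strict transform, an internal $(-2)$-curve on $X$.

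Those points lie on $\Supp D'$, since $f(\Exc f)\subseteq f(\Supp D)$. On the other hand, $C'$ is disjoint from $\Supp D'$ by Lemma \ref{lem:negativeselfintersection}(b). This settles the key step, and connectedness is used to ensure $f(\Exc f)\subseteq\Supp D'$ even when whole connected components of $D$ are contracted. Finally, apply the same argument to $\rho^{-1}$ to obtain equality.
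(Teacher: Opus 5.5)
Your proposal is correct and follows essentially the paper's proof. The splitting $N^1(X)_\R=f^*N^1(X')_\R\oplus V$ drives (a)--(f). Part (h) is settled exactly as in the paper, by pairing with the ample class $\rho(h)$ from (f) and invoking Lemma~\ref{lem:rationalsurface-1curve}(b). Part (i) pulls back the internal $(-2)$-curve supplied by Lemma~\ref{lem:sigmanodal}. The remaining differences are cosmetic: for instance, in (g) you use the two connected components of $\{x^2>0\}$ instead of the independence of $\mathcal C_X^+$ from the choice of ample class.

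Two clean-ups are needed.
\begin{enumerate}
\item In (h), delete the abandoned Riemann--Roch detour. It cannot work, since a divisor in the class ${-}\rho(E)$ has Euler characteristic $0$.
\item In (i), the inclusion $f(\Exc f)\subseteq f(\Supp D)$ alone does not give $f(\Exc f)\subseteq\Supp D'$. Derive it instead from connectedness of $D$. Each fibre of $f$ over a point of $f(\Exc f)$ is a connected union of exceptional curves contained in $\Supp D$. So if $D$ is connected and $D'\neq0$, some non-exceptional component of $D$ meets that fibre.
\end{enumerate}
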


\begin{proof}
Clearly $\rho$ is an endomorphism of $N^1(X)_\R$, hence for (a) it suffices to show that it is an injective map. To that end, let $x\in N^1(X)_\R$ be such that $\rho(x)=0$. Then $f_*\big(\rho(x)\big)=0$. Since $f_*\circ\rho=\rho'\circ f_*$ by the definition of $\rho$, we obtain $\rho'(f_*x)=0$, hence $f_*x=0$ as $\rho'$ is bijective. Thus,
$$ 0=\rho(x)=f^*\big(\rho'(f_*x)\big)+x-f^*f_*x=x. $$
Thus, $\rho$ is injective, which gives (a). Part (d) is obvious.

For (b), we easily calculate
$$ \rho(x) = f^* \big(\rho'(f_*x)\big) + x - f^*f_*x = f^*f_*x + x - f^*f_*x = x, $$
as desired. Part (c) follows from (b) since $f_*[K_X]=[K_{X'}]$.

For (e), fix $x,y\in N^1(X)_\R$. By Lemma \ref{lem:pushforward}(c) there exist $x',y'\in N^1(X')_\R$ as well as $e_x,e_y\in N^1(X)_\R$, which are numerical classes of $f$-exceptional $\R$-divisors on $X$, such that
$$x=f^*x'+e_x \quad\text{and}\quad y=f^*y'+e_y.$$
Then it is an easy calculation that
$$\rho(x)=f^*\big(\rho'(x')\big) + e_x \quad\text{and}\quad\rho(y)=f^*\big(\rho'(y')\big) + e_y.$$
Therefore, by the projection formula and since $\rho'$ preserves the intersection product on $ N^1(X')_{\R} $, we obtain
\begin{align*}
x\cdot y &= \big(f^*x' + e_x\big) \cdot \big(f^*y' + e_y\big) = f^*x' \cdot f^*y'+e_x \cdot e_y = x'\cdot y' +e_x \cdot e_y \\
&= \rho'(x')\cdot\rho'(y') +e_x \cdot e_y = f^*\big(\rho'(x')\big) \cdot f^*\big(\rho'(y')\big) +e_x \cdot e_y\\
&= \big(f^*\big(\rho'(x')\big) + e_x\big) \cdot \big(f^*\big(\rho'(y')\big) + e_y\big) = \rho(x) \cdot \rho(y),
\end{align*}
hence $\rho$ preserves the intersection product on $ N^1(X)_{\R} $.

Now we prove (f). By \cite[Proposition 1.7.10]{Laz04} and \cite[Lemma 2.62]{KM98} there exist $m\in\Z_{>0}$ and $e\in N^1(X)$, which is the numerical class of an $f$-exceptional divisor on $X$, such that the classes $ mf^* h' - e $ and $ mf^*\big(\rho'(h')\big) - e $ are ample. Set $h\coloneqq mf^* h' - e$. We easily calculate that $\rho(h)=mf^*\big(\rho'(h')\big) - e$, hence both $h$ and $\rho(h)$ are ample.

For (g), fix an ample class $h\in N^1(X)$ as in (f), and denote by $h_1$ the ample class $\rho(h)$. Then by (e) we have
    \begin{align*}
    \rho(\mathcal C_X^+) &= \big\{ \rho(x) \mid x\in N^1(X)_\R,\; x^2 >0,\, x\cdot h >0 \big\}\\
    &= \big\{ \rho(x) \mid x\in N^1(X)_\R,\, \rho(x)^2 >0,\, \rho(x)\cdot h_1 >0 \big\}\\
    &= \big\{ y\in N^1(X)_\R \mid y^2 >0,\, y\cdot h_1 >0 \big\}=\mathcal C_X^+,
    \end{align*}
   as desired.

For (h), fix an ample class $h\in N^1(X)$ as in (f), and denote by $h_1$ the ample class $\rho(h)$. We have $\rho\big([K_X]\big)=[K_{X}]$ by the assumptions of (h) and by (c), and $\rho$ preserves the lattice $N^1(X)$ by the assumptions of (h) and by (d). Therefore, by (e) and by using Lemma \ref{lem:rationalsurface-1curve}(b) twice, we obtain
    \begin{align*}
    \rho(\mathcal{M}_X) &= \big\{ \rho(x) \mid x\in N^1(X),\, x^2 = [K_X]\cdot x = {-}1,\, x\cdot h>0 \big\}\\
    &= \big\{ \rho(x) \mid x\in N^1(X),\, \rho(x)^2 = \rho([K_X])\cdot \rho(x) = {-}1,\, \rho(x)\cdot h_1>0 \big\}\\
    &= \big\{ y\in N^1(X) \mid y^2 = [K_X]\cdot y = {-}1,\ y\cdot h_1 >0 \big\}=\mathcal M_X,
    \end{align*}
    which was to be proved.
    
    Finally, we show (i). Set $D'\coloneqq f_*D$. Let $C$ be a curve on $X$ such that $C^2={-}2$ and $C\cap\Supp D=\emptyset$, and set $C'\coloneqq f_* C$. As $\Exc(f)\subseteq\Supp D$ by assumption, we conclude that $C'\cap\Supp D'=\emptyset$ and
\begin{equation}\label{eq:rho0}
C = f^* C'.
\end{equation}
Therefore, $(C')^2=C^2={-}2$, which implies $[C']\in\Delta_{X',D'}^{\nod}$. As the $\R$-divisor ${-}(K_{X'}+D')={-}f_*(K_X+D)$ is nef by Lemma \ref{lem:pushforward}(d), we have $\rho'\big([C']\big)\in \Delta_{X',D'}^{\nod}$ by Lemma \ref{lem:sigmanodal}. Hence, there exists a curve $C_1'$ on $X'$ such that $(C_1')^2={-}2$ and $C_1'\cap\Supp D'=\emptyset$, such that 
\begin{equation}\label{eq:rho1}
\rho'\big([C']\big)=[C_1'].
\end{equation}
Now, $D$ is connected and $\Exc(f)\subseteq\Supp D$ by assumption. This implies that $f\big(\Exc(f)\big)\subseteq\Supp D'$, hence the curve $C_1\coloneqq f^*C_1'$ is irreducible, disjoint from $\Supp D$, and $(C_1)^2=(C_1')^2={-}2$. This shows that $[C_1]\in \Delta_{X,D}^{\nod}$ and thus, by \eqref{eq:rho0} and \eqref{eq:rho1} we have
\begin{align*}
\rho\big([C]\big) &= f^*\big(\rho'\big(f_*[C]\big)\big)+[C]-f^*f_*[C]\\
&=f^*\big(\rho'\big([C']\big)\big)=f^* [C_1']=[C_1]\in \Delta_{X,D}^{\nod}.
\end{align*}
Therefore, $\rho(\Delta_{X,D}^{\nod})\subseteq\Delta_{X,D}^{\nod}$. The converse inclusion follows by considering $\rho^{-1}$ instead of $\rho$. This finishes the proof.
\end{proof}

We can now prove Theorem \ref{thm:cremonaiso}.

\begin{proof}[Proof of Theorem \ref{thm:cremonaiso}]
Let $E_1,\dots,E_k$ be all the $f$-exceptional prime divisors on $X$. Since $E_i\subseteq\Supp D$ for each $i$ by assumption, we have $ \sigma\big([E_i]\big)=[E_i] $ for every $\sigma\in\Cr(X,D)$ and for all $i=1,\dots,k$. Then $\theta$ is well defined and it is a monomorphism by Lemma \ref{lem:isometries}(i). This gives (a).

In the remainder of the proof we show (b). Let $\rho'\in\Cr(X',D')$, and set
$$ \rho\coloneqq f^*\circ\rho'\circ f_*+\id-f^*\circ f_* , $$
where $\id$ is the identity on $N^1(X)_\R$. It is easy to see that $\theta(\rho)=\rho'$. We will show that $\rho\in\Cr(X,D)$, which will prove that $\theta$ is surjective and $\theta^{-1}(\rho')=\rho$.

By Lemma \ref{lem:isometriesdownup}(c)(d)(e) we have that $\rho$ fixes the class $[K_X]$, it preserves the lattice $N^1(X)$, and it preserves the intersection product on $N^1(X)_\R$. Since $D'=f_*D$, the operator $\rho$ also fixes the numerical classes of all the components of $D$ by Lemma \ref{lem:isometriesdownup}(b). To show that $\rho\in\Cr(X,D)$, it remains to show that $\rho$ preserves the cone $\Eff(X)$. Since $\overline{\Eff}(X)=\Eff(X)$ by assumptions of part (b), by Lemma \ref{lem:rational_Mori_cone} and by what we showed above, it suffices to prove that $\rho$ preserves the sets $\mathcal C_X^+$, $\mathcal M_X$ and $\Delta_{X,D}^{\nod}$.

To that end, $\rho$ preserves the sets $\mathcal C_X^+$ and $\mathcal M_X$ by Remark \ref{rem:duality} and Lemma \ref{lem:isometriesdownup}(g)(h). Thus, we are finished if $\Delta_{X,D}^{\nod}=\emptyset$. If $D$ is connected, then $\rho$ preserves $\Delta_{X,D}^{\nod}$ by Lemma \ref{lem:isometriesdownup}(i). This concludes the proof.
\end{proof}

\section{Lifting the Cone Conjecture}

In this section we show that in certain situations, which fit perfectly with the setup of this paper, a polyhedral action of a group of Cremona isometries can be lifted by a birational morphism. The following is the main result of the section.

\begin{thm}\label{thm:liftingoneblowup}
    Let $(X,B+M)$ be a klt Calabi--Yau generalised pair such that $X$ is a smooth rational surface and $M\not\equiv0$, and let $D\geq0$ be an integral divisor such that $D\equiv -K_X$. Let $f\colon X\to X'$ be a birational morphism to a smooth surface $X'$, set $D'\coloneqq f_*D$, and assume that $\Exc(f)\subseteq\Supp D$. Assume that $D$ is connected or that $\Delta_{X,D}^{\nod}=\emptyset$.\footnote{Recall the definition of the set $\Delta_{X,D}^{\nod}$ from Section \ref{sec:prelim}.}
    
    If there exists a rational polyhedral fundamental domain for the action of the group $\Cr(X',D')$ on $\Nef^e(X')$, then there exists a rational polyhedral fundamental domain for the action of the group $\Cr(X,D)$ on $\Nef^e(X)$.
\end{thm}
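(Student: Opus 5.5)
We first put ourselves in the setting of Theorem \ref{thm:cremonaiso}(b). Since $M\not\equiv0$, Theorem \ref{thm:nefandeffectivecones}(b) gives $\Eff(X)=\overline{\Eff}(X)$ and $\Nef(X)=\Nef^e(X)=\Nef^+(X)$. By Lemma \ref{lem:pushgeneralised}, $(X',f_*B+f_*M)$ is a klt Calabi--Yau generalised pair with $f_*M\not\equiv0$, so the same identities hold on $X'$. Next we show that ${-}(K_X+D)$ is nef, which is needed for Theorem \ref{thm:cremonaiso}(b). Since $D\equiv{-}K_X$, we have $K_X+D\equiv0$, so this holds. Therefore $\theta\colon\Cr(X,D)\to\Cr(X',D')$ is an isomorphism with inverse $\rho'\mapsto\rho=f^*\circ\rho'\circ f_*+\id-f^*\circ f_*$. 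Such a $\rho$ fixes every exceptional class and satisfies $\rho\circ f^*=f^*\circ\rho'$.

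Let $\Pi'\subseteq\Nef(X')$ be a rational polyhedral fundamental domain for $\Cr(X',D')$. By Theorem \ref{thm:Looijenga}, it suffices to find a polyhedral cone $\Pi\subseteq\Nef^+(X)=\Nef(X)$ with $\Amp(X)\subseteq\Cr(X,D)\cdot\Pi$. Our strategy is to reduce from $X$ to $X'$ as follows. Take a nef class $x$ on $X$. By Lemma \ref{lem:pushforward}(d) the class $f_*x$ is nef, so $f_*x=\rho'(y')$ for some $y'\in\Pi'$ and $\rho'\in\Cr(X',D')$. Put $\rho=\theta^{-1}(\rho')$. Then $f_*\rho^{-1}(x)=y'\in\Pi'$, and $\rho^{-1}(x)$ is nef by Remark \ref{rem:duality}. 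So every nef class is, up to $\Cr(X,D)$, in the cone
$$\mathcal K\coloneqq\{z\in\Nef(X)\mid f_*z\in\Pi'\}.$$
It therefore suffices to show that $\mathcal K$ is polyhedral; it is automatically contained in $\Nef^+(X)$.

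By Lemma \ref{lem:pushforward}(c),(f), any $z\in\mathcal K$ has the form $z=f^*y'-e$, where $y'\in\Pi'$ and $e\geq0$ is supported on $E_1,\dots,E_k$. Nefness of $z$ amounts to two conditions:
\begin{enumerate}[\normalfont (1)]
\item $z\cdot E_i\geq0$ for all $i$;
\item $z\cdot C\geq0$ for every non-exceptional curve $C$.
\end{enumerate}
The main step is to show that only finitely many curves $C$ give binding conditions in (2), uniformly over $y'\in\Pi'$. By Lemma \ref{lem:rational_Mori_cone}, the cone $\overline{\Eff}(X)$ is generated by $\overline{\mathcal C_X^+}$, $\mathcal M_X$, $\Comp(D)$ and $\Delta^{\nod}_{X,D}$. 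Since the exceptional curves lie in $\Supp D$ and $\Comp(D)$ is finite, it suffices to handle the classes in $\mathcal M_X$ and $\Delta^{\nod}_{X,D}$; classes in $\overline{\mathcal C_X^+}$ pair nonnegatively with any nef class.

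For $C\in\Delta^{\nod}_{X,D}$, $C$ is disjoint from $\Supp D\supseteq\Exc(f)$, so $z\cdot C=y'\cdot f_*C$. This condition is automatic because $y'$ is nef. For a $(-1)$-curve $C$ not in $D$, Lemma \ref{lem:negativeselfintersection}(c) gives $D\cdot C=1$. Hence $C$ meets $\Supp D$ in a single point with multiplicity one, so $e\cdot C\leq\max_i\mult$-coefficient bound, i.e.\ $C\cdot E_i\in\{0,1\}$ for at most one $i$ with small multiplicity in $D$.

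We then use that $\Pi'$ is rational polyhedral and that $f^*y'\cdot C=y'\cdot f_*C$ is a nonnegative integer combination. We argue that $z\cdot C<0$ forces $y'\cdot f_*C<e\cdot C\le$ a bounded quantity. Because $\Pi'$ is generated by finitely many integral nef classes $y'_j$, and $e$ is bounded in terms of $y'$ via (1) (negative definiteness of the $E_i$ bounds $e$ linearly in $y'$), the constraint $\sum_j\lambda_j\,y'_j\cdot f_*C<c\sum_j\lambda_j$ forces $y'_j\cdot f_*C$ to be bounded for some $j$. Only finitely many effective classes have bounded degree against a big generator, so this gives finitely many $C$.

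If some generator $y'_j$ is not big, we would instead use that on a Calabi--Yau generalised pair with $M\not\equiv0$ the non-big nef boundary is controlled by Lemma \ref{lem:nonvanishing}(d). That lemma shows the non-semiample direction is the single ray of $M$, so a semiample fibration gives finiteness there. We expect this finiteness step to be the main obstacle. With finitely many binding conditions, $\mathcal K$ is cut out by finitely many linear inequalities inside the polyhedral cone $\{f^*y'-e\}$, so it is polyhedral, and Theorem \ref{thm:Looijenga} concludes.
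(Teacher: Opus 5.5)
Your reduction is the same as the paper's. Your $\mathcal K$ is exactly the cone $(f_*)^{-1}(\Pi')\cap\Nef(X)$ used there. The paper also lifts $\rho'$ to $\theta^{-1}(\rho')$ via Theorem \ref{thm:cremonaiso} and concludes with Theorem \ref{thm:Looijenga}, and it only needs $\mathcal K$ to lie in \emph{some} polyhedral subcone of $\Nef(X)$. The gap is in that one substantive step: your finiteness argument does not establish it, for three reasons.

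\emph{Condition (1) does not bound $e$.} Since $f^*y'\cdot E_i=0$, it only says $e\cdot E_i\le 0$. This does not involve $y'$ and is invariant under $e\mapsto ce$; for a single blowup, $e=aE$ satisfies it for every $a\ge0$.

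\emph{Non-big generators break the counting, and they cannot be avoided.} The step ``$y_j'\cdot f_*C$ is bounded for some $j$, hence there are finitely many $C$'' fails when that $j$ is a non-big generator. If $X'$ is a Sakai surface, as in the application in Theorem \ref{thm:main1}(c), then $[-K_{X'}]$ is nef, effective and fixed by $\Cr(X',D')$. Since the translates of $\Pi'$ cover $\Nef^e(X')$, it lies in $\Pi'$. Writing $K_X=f^*K_{X'}+\sum_ia_iE_i$, the class $f^*[-K_{X'}]$ has degree at most $1+\max_ia_i$ on every $(-1)$-curve of $X$ not contained in $D$, and there are typically infinitely many such curves. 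Your appeal to Lemma \ref{lem:nonvanishing}(d) does not help: the non-semiample ray is exactly this kind of class. For semiample non-big classes, bounded degree does not give finiteness either, since a rational elliptic surface can have infinitely many sections, all of degree $1$ on the fibre.

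\emph{Discarding $\overline{\mathcal C_X^+}$ is circular.} You dismiss it because it pairs nonnegatively with nef classes, but you are testing whether $z$ is nef. That condition says $z\in\overline{\mathcal C_X^+}$, which is not linear, and you must show it follows from the finitely many linear conditions you keep.

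The missing idea is to get finiteness from the MMP for generalised pairs rather than by counting curves. Let $D=N+P$ be the Zariski decomposition. Lemma \ref{lem:prepared} gives a klt Calabi--Yau generalised pair $(X,N+P)$ with $\Supp N\subseteq\Supp P=\Supp D$. Write $\Pi'=\sum_i\R_{\ge0}[G_i']$ with $G_i'\ge0$. Then any $x\in\mathcal K$ is $[G+E]$ with $G=\sum_i g_if^*G_i'\ge0$ and $E$ exceptional. Since $\Supp E\subseteq\Supp P$, one can choose $0<\varepsilon\ll\delta\ll1$ with $\delta P+\varepsilon E\ge0$ and
\[
\varepsilon x=\big[K_X+(N+\varepsilon G+\delta P+\varepsilon E)+(1-\delta)P\big].
\]
Here $(X,N+\varepsilon G+\delta P+\varepsilon E)$ is klt, and the boundary lies in the fixed polytope spanned by $P$ and the components of $D+\sum_i f^*G_i'$. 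By \cite[Proposition 3.20]{HL22}, the nef classes of this form make up a rational polytope $\mathcal N$, so $\mathcal K\subseteq\R_{\ge0}\mathcal N\subseteq\Nef(X)$.

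If you prefer to stay with your approach, part of it can be rescued. You observed that a $(-1)$-curve $C\not\subseteq\Supp D$ meets at most one $E_i$, transversally. Hence $e\cdot C$ is either $0$ or a single coefficient of $e$. Combined with $(y_j'\cdot f_*C)_j\in\Z^m_{\ge0}$ and Dickson's lemma, this cuts the $(-1)$-curve inequalities down to finitely many, with no bigness assumption. You would still need a genuine argument for the positive-cone condition.
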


\begin{proof}
The proof is inspired by the proof of \cite[Theorem 24]{Xu24}.

    Let $D=N+P$ be the Zariski decomposition of $D$. Then $(X,N+P)$ is a klt Calabi--Yau generalised pair by Lemma \ref{lem:prepared}, and
    \begin{equation}\label{eq:supports}
    \Supp N \subseteq \Supp P = \Supp D.
    \end{equation}
    By Theorem \ref{thm:nefandeffectivecones}(b) we have
    \begin{equation}\label{eq:twoequalities1}
    \Nef(X)=\Nef^e(X)=\Nef^+(X)\quad\text{and}\quad \overline{\Eff}(X)=\Eff(X),
    \end{equation}
    and by Lemma \ref{lem:pushgeneralised} and by Theorem \ref{thm:nefandeffectivecones}(b) we have
    \begin{equation}\label{eq:twoequalities2}
    \Nef(X')=\Nef^e(X').
    \end{equation}
    
    \medskip
    
	\emph{Step 1.}
    Let $\Pi'$ be a rational polyhedral fundamental domain of $\Nef(X')$ for the action of $\Cr(X',D')$. Then by \eqref{eq:twoequalities2} there exist effective $\Q$-divisors $G_1',\dots, G_m'$ on $X'$ such that $\Pi'=\sum_{i=1}^m\R_{\geq0}[G_i']$. Let $F_1,\dots,F_s$ be all the prime components of the $\Q$-divisor $D+f^*G_1'+\dots+f^*G_m'$. Denote
    $$ V \coloneqq  \bigoplus_{i=1}^s [0,1]F_i\oplus[0,1]P $$
    and
    $$ \mathcal L(V)\coloneqq \{ \Delta+\mu P\in V\mid (X,\Delta + \mu P) \text{ is a log canonical generalised pair}\}.$$
    By \cite[Proposition 3.20]{HL22},\footnote{With notation from \cite[Proposition 3.20]{HL22}, we apply op.\ cit.\ to the set $T$ of \emph{all} extremal rays of the closure of the cone of effective curves on $X$. In that case, our set $\mathcal N$ is the image of the set $K_X+\mathcal N_T$ from op.\ cit.\ under the canonical map from $K_X+V$ to $N^1(X)_\R$.} the set
    $$\textstyle \mathcal{N} \coloneqq  \{ [K_X+\Delta+\mu P]\mid \Delta+\mu P\in\mathcal L(V),\, K_X+\Delta+\mu P\text{ is nef}\}\subseteq N^1(X)_\R $$
    is a rational polytope, hence the set
    $$ \mathcal C\coloneqq \R_{\geq0}\mathcal{N}\subseteq \Nef(X)$$
    is a rational polyhedral cone.
    
    \medskip
    
    \emph{Step 2.}
    Consider the linear map $f_*\colon N^1(X)_\R\to N^1(X')_\R$, and denote $\Pi\coloneqq (f_*)^{-1}(\Pi') \cap \Nef(X)\subseteq N^1(X)_\R$. We show in this step that
    \begin{equation}\label{eq:08}
    \Pi\subseteq\mathcal C.
    \end{equation}
    
    To that end, let $x \in \Pi$. Then $x'\coloneqq f_*x\in \Pi'$, hence there exist non-negative real numbers $g_1,\dots,g_m$ such that
    \begin{equation}\label{eq:8989}
    x' = \sum_{i=1}^m g_i [G_i'].
    \end{equation}
    Denote
    $$G\coloneqq \sum_{i=1}^m g_i f^* G_i', $$
    and note that $G$ is an effective $\R$-divisor on $X$. By Lemma \ref{lem:pushforward}(c) there exists an $f$-exceptional $\R$-divisor $E$ such that $x=f^*x'+[E]$, which together with \eqref{eq:8989} gives
    \begin{equation}\label{eq:8989a}
    x = [G+E].
    \end{equation}
    Now, since $\Supp E\subseteq\Supp D$ by assumption, and since $\Supp P=\Supp D$ by \eqref{eq:supports}, we may pick $0<\varepsilon\ll\delta\ll1$ such that
    $$\delta P+\varepsilon E\geq0,$$
    and such that
    $$\text{the pair }(X,N+\varepsilon G+\delta P+\varepsilon E)\text{ is klt.}$$
    Since $\Supp(N+\varepsilon G+\delta P+\varepsilon E)\subseteq\sum_{i=1}^sF_i$, we conclude that
    \begin{equation}\label{eq:08a}
    (N+\varepsilon G + \delta P + \varepsilon E ) + (1-\delta) P\in V.    
    \end{equation}
    Furthermore, since $K_X+P+N \equiv 0$, from \eqref{eq:8989a} we obtain
    \begin{align}\label{eq:coeff}
    \varepsilon x &= [K_X+N+P] + \varepsilon [G+E] \\
    &= \big[K_X+(N+\varepsilon G + \delta P + \varepsilon E ) + (1-\delta) P\big].\notag
    \end{align}
    Consequently, the fact that $x$ is nef, together with \eqref{eq:08a} and \eqref{eq:coeff}, gives
    $$\big[K_X+(N+\varepsilon G + \delta P + \varepsilon E ) + (1-\delta) P\big]\in \mathcal N,$$
    hence $x\in \mathcal C$ by \eqref{eq:coeff}. This shows \eqref{eq:08}.
    
    \medskip
    
    \emph{Step 3.}
    In this step we show that
    \begin{equation}\label{eq:08b}
	\Nef(X)\subseteq \Cr(X,D)\cdot \mathcal C.    
    \end{equation}
    This immediately implies the theorem: indeed, by Theorem \ref{thm:Looijenga}, the inclusion \eqref{eq:08b} yields that there exists a rational polyhedral fundamental domain for the action of $\Cr(X,D)$ on $\Nef^+(X)$, and we conclude by \eqref{eq:twoequalities1}.
    
    To prove \eqref{eq:08b}, let $y\in\Nef(X)$ and set $y'\coloneqq f_*y$. Then $y'\in\Nef^e(X')$ by Lemma \ref{lem:pushforward}(d) and by \eqref{eq:twoequalities2}. Since $\Pi'$ is a fundamental domain for the action of $\Cr(X',D')$ on $\Nef^e(X')$, there exists $\rho' \in \Cr(X',D')$ such that $\rho'(y')\in \Pi'$. Denote
	$$\rho\coloneqq f^*\circ\rho'\circ f_*+\id-f^*\circ f_*,$$
	where $\id$ is the identity on $N^1(X)_\R$. By Lemma \ref{lem:pushforward}(c) there exists a numerical class $e$ of an $f$-exceptional $\R$-divisor, such that $ y = f^*y' + e $, and it is easy to calculate that $ \rho(y)=f^*\big(\rho'(y')\big)+e $. Thus,
	\begin{equation}\label{eq:rhos}
	f_*\big(\rho(y)\big)=\rho'(y')\in\Pi'.	
	\end{equation}
	Moreover, by \eqref{eq:twoequalities1} and by Theorem \ref{thm:cremonaiso} we have $\rho\in\Cr(X,D)$. Thus, $\rho(y)\in\Nef(X)$ since $y\in\Nef(X)$. Therefore, $\rho(y)\in\Pi$ by the definition of $\Pi$ and by \eqref{eq:rhos}, hence $\rho(y)\in\mathcal C$ by \eqref{eq:08}. Consequently, $y\in\rho^{-1}(\mathcal C)$, which shows \eqref{eq:08b} and finishes the proof.
\end{proof}

\section{Descending the Cone Conjecture}

In this section we show that in certain situations, which fit perfectly with the setup of this paper, a polyhedral action of a group of Cremona isometries can descend by a birational morphism. The following is the main result of the section.

\begin{thm}\label{thm:descent}
    Let $(X,B+M)$ be a klt Calabi--Yau generalised pair with $M\not\equiv0$, and let $f\colon (X',B')\to (X,B)$ be the $\Q$-factorial terminalisation of $(X,B)$. Assume that there exists an integral divisor $D' \sim_\Q {-}K_{X'}$, and set $D\coloneqq f_*D'$. Assume that $\kappa(X',{-}K_{X'})=0$ and $\Delta_{X',D'}^{\nod}=\emptyset$.\footnote{Recall the definition of the set $\Delta_{X',D'}^{\nod}$ from Section \ref{sec:prelim}.}
    
    If there exists a rational polyhedral fundamental domain for the action of the group $\Cr(X',D')$ on $\Nef^e(X')$, then there exists a rational polyhedral fundamental domain for the action of the group $\Cr(X,D)$ on $\Nef^e(X)$.
\end{thm}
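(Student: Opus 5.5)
We mirror the proof of Theorem \ref{thm:liftingoneblowup}, now pushing a polyhedral action down along $f$ rather than lifting it. First we collect the cone equalities. Since $f$ is crepant, $(X',B'+f^*M)$ is a klt Calabi--Yau generalised pair with $f^*M\not\equiv0$. Theorem \ref{thm:nefandeffectivecones}(b) then gives $\Nef(X')=\Nef^e(X')$ and $\overline{\Eff}(X')=\Eff(X')$, and also $\Nef(X)=\Nef^e(X)=\Nef^+(X)$. Here $N^1(X)_\R$ carries the lattice $N^1_W(X)$, so we apply Theorem \ref{thm:Looijenga} with $\Lambda=N^1_W(X)$; the cone $\Nef^+(X)$ spanned by nef line bundles has the same rational span, so this is harmless. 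We note that $D'$ is effective because $\kappa(X',-K_{X'})=0$ (otherwise replace $D'$ by the unique effective member). Every $f$-exceptional curve $E$ has discrepancy $a(E,X,B)\le 0$ with respect to the terminal model, and it satisfies $-K_{X'}\cdot E=(B'+f^*M)\cdot E-K_{X'}\cdot E$ ... rather than compute this, we argue that $E\subseteq \Supp B'$ with positive coefficient, since $X'$ is terminal over $X$ and $f$ extracts only divisors of nonpositive discrepancy.

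The first real step is to show $\Exc(f)\subseteq\Supp D'$. We would use $D'\sim_\Q -K_{X'}\sim_\R B'+f^*M$ together with $\kappa(X',-K_{X'})=0$. Choose an effective $M_0\equiv f^*M$, which exists by Lemma \ref{lem:nonvanishing}(a). By uniqueness of the effective $\Q$-divisor in $|-K_{X'}|_\Q$ and continuity, the unique effective $\R$-divisor numerically equivalent to $-K_{X'}$ should be $D'$. Hence $D'= B'+M_0\ge B'$, and since each exceptional curve appears in $B'$, it lies in $\Supp D'$. This uniqueness of the effective representative in the numerical class, derived from $\kappa=0$ and the rationality of $X'$ (Lemma \ref{lem:negativeexceptional}-type arguments via the Zariski decomposition), is where we expect to have to be careful. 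If uniqueness fails in the $\R$-setting, we would instead use the Zariski decomposition of $D'$ and Lemma \ref{lem:prepared}: $N'\le B'$ while the exceptional curves have negative definite matrix and lie in the negative part.

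With $\Exc(f)\subseteq\Supp D'$ established, Theorem \ref{thm:cremonaiso}(a) gives the monomorphism $\theta\colon\Cr(X',D')\to\Cr(X,D)$. Now let $\Pi'$ be a rational polyhedral fundamental domain on $\Nef(X')$ and set $\mathcal C\coloneqq f_*\Pi'$, which is a rational polyhedral cone inside $\Nef(X)$ by Lemma \ref{lem:pushforward}(d). Given $y\in\Nef(X)$, we have $f^*y\in\Nef(X')$, so $\sigma(f^*y)\in\Pi'$ for some $\sigma\in\Cr(X',D')$. Applying $f_*$ and using that $\sigma$ fixes the exceptional classes, so that $\sigma\circ f^*=f^*\circ\theta(\sigma)$ by Lemma \ref{lem:isometries}(e), we get $\theta(\sigma)(y)=f_*\sigma(f^*y)\in\mathcal C$. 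Therefore $\Nef(X)\subseteq\Cr(X,D)\cdot\mathcal C$, and Theorem \ref{thm:Looijenga} together with the cone equalities concludes the proof. The hypothesis $\Delta^{\nod}_{X',D'}=\emptyset$ seems unnecessary along this route, since only part (a) of Theorem \ref{thm:cremonaiso} is needed. If we instead need $\theta$ to be surjective, for example to compare with $\Cr(X,D)$-invariance, we would invoke part (b), which requires the smoothness of $X$ and is unavailable here. The main obstacle is therefore the inclusion $\Exc(f)\subseteq\Supp D'$.
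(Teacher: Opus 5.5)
\textbf{There is a genuine gap, at the step you yourself flag: $\Exc(f)\subseteq\Supp D'$.} The claim you use there is false. Every $f$-exceptional curve $E$ has $\mult_E B'=-a(E,X,B)\ge 0$, and this coefficient is $0$ when the discrepancy is $0$. So $D'\ge B'$, even if granted, does not put the exceptional curves into $\Supp D'$.

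Concretely, suppose $X$ has an $A_1$-point $p\notin\Supp D$ (so also $p\notin\Supp B$). Then the $(-2)$-curve $E$ over $p$ satisfies $D'\cdot E=-K_{X'}\cdot E=0$. This forces $D'=f^*D$ near $E$, so $E$ is disjoint from $\Supp D'$. This really happens under all the other hypotheses: take $X$ obtained from a non-generic Sakai surface by contracting an internal $(-2)$-curve, with $B=0$ and $M=-K_X$. So the inclusion is false without $\Delta^{\nod}_{X',D'}=\emptyset$. Far from being unnecessary, that hypothesis is the key input of this step, and your argument never uses it.

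Your fallback fails for the same reason. Negative definiteness of $\Exc(f)$ does not place its curves in the negative part of the Zariski decomposition of $D'$, and a curve disjoint from $\Supp D'$ is certainly not there. Discrepancy-$0$ $(-1)$-curves also occur, for example over a transversal crossing of two components of $B$ with coefficients $\frac12$. So those need an argument too.

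\textbf{How the paper closes this step.} By the first paragraph of the proof of Lemma~\ref{lem:nonvanishing} there is $M'\ge0$ with $M'\sim_\R-(K_{X'}+B')$; you need $\sim_\R$ here, not merely $\equiv$. Then \cite[Lemma 2.6(b)]{Laz26} together with $\kappa(X',D')=0$ gives $D'=B'+M'$. Hence $0\le B'\le D'$ and every coefficient $b_i$ of $B'$ is $<1$.

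Now suppose an $f$-exceptional $E$ is not in $\Supp D'$. Lemma~\ref{lem:negativeselfintersection}, applied to $D'$ (as $K_{X'}+D'\equiv0$), shows $E$ is one of two things.
\begin{enumerate}[(i)]
\item An internal $(-2)$-curve. This is excluded by the hypothesis $\Delta^{\nod}_{X',D'}=\emptyset$.
\item A $(-1)$-curve with $D'\cdot E=1$. Then $E$ meets a single component $D_1'$ transversally and $d_1=1$, so $B'\cdot E=b_1<1$. But crepancy $(K_{X'}+B')\cdot E=0$ together with $K_{X'}\cdot E=-1$ gives $B'\cdot E=1$, a contradiction.
\end{enumerate}

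\textbf{The rest of your proposal is correct.} It is in fact simpler than the paper's Step 2. You set $\mathcal C=f_*\Pi'$ and use $\theta(\sigma)(y)=f_*\sigma(f^*y)$ directly, which gives $\Nef(X)\subseteq\Cr(X,D)\cdot\mathcal C$. The paper instead shows that $f^*\Nef(X)$ is a $\Cr(X',D')$-invariant face of $\Nef^+(X')$. It then uses \cite[Proposition 3.7]{GLSW26} to restrict the polyhedral action to that face before pushing forward.
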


\begin{proof}
We prove the theorem in two steps. Step 2 is inspired by the proofs of \cite[Theorem 24]{Xu24} and \cite[Proposition 5.3]{GLSW26}.

\medskip

\emph{Step 1.}
In this step we show that every $f$-exceptional curve on $X'$ belongs to $\Supp D'$.

The couple $(X',B'+f^*M)$ is a klt Calabi--Yau generalised pair. Thus, by the first paragraph of the proof of Lemma \ref{lem:nonvanishing} there exists an $\R$-divisor $M'\geq0$ such that ${-}(K_{X'}+B')\sim_\R M'$. Since $D'\sim_\Q{-}K_{X'}\sim_\R B'+M'$ and since $\kappa(X',D')=\kappa(X',{-}K_{X'})=0$ by assumption, we have $D'=B'+M'$ by \cite[Lemma 2.6(b)]{Laz26}. In particular, $0\leq B'\leq D'$. Then, we may write
$$D'=\sum_{i=1}^k d_iD_i' \quad\text{and}\quad B'=\sum_{i=1}^k b_iD_i',$$
where for each $i$ we have $d_i\in\Z_{>0}$, $D_i'$ is a prime divisor, and $0\leq b_i<1$.

Note that $X'$ is smooth. Let $E'$ be an $f$-exceptional curve on $X'$ and assume, for contradiction, that $E'\nsubseteq\Supp D'$. We have $(E')^2<0$ by Lemma \ref{lem:pushforward}(e), and since $\Delta_{X',D'}^{\nod}=\emptyset$, the curve $E'$ is a $(-1)$-curve such that $D'\cdot E'=1$ by Lemma \ref{lem:negativeselfintersection}. Then, as $E'\nsubseteq\Supp D'$, we conclude that, without loss of generality,
\begin{equation}\label{eq:345}
d_1=1,\quad D_1'\cdot E'=1\quad\text{and}\quad D_i'\cdot E'=0\text{ for all }i\geq2.
\end{equation}
On the other hand, since $K_{X'}+B'\sim_\R f^*(K_X+B)$, we have $ (K_{X'}+B')\cdot E'=0 $. As $K_{X'}\cdot E'={-}1$, we conclude that $B'\cdot E'=1$. But by \eqref{eq:345} we have $B'\cdot E'=b_1<1$, a contradiction.

\medskip

\emph{Step 2.}
By Step 1 and by Lemma \ref{lem:isometries}(f), the group $\Cr(X',D')$ preserves $f^*\Nef(X)$. Since $\Nef(X) = \Nef^+(X)$ by Theorem \ref{thm:nefandeffectivecones}, we have that $f^*\Nef(X)$ is a face of the cone $\Nef^+(X')$ by the first paragraph of the proof of \cite[Proposition 5.3]{GLSW26}. As $\Nef^+(X')=\Nef^e(X')$ by Theorem \ref{thm:nefandeffectivecones}, and as the action of $\Cr(X',D')$ on $\Nef^e(X')$ is of polyhedral type by the assumptions of the theorem and by Theorem \ref{thm:Looijenga}, we conclude by \cite[Proposition 3.7]{GLSW26} that the action of $\Cr(X',D')$ on $f^*\Nef(X)$ is of polyhedral type. Thus, there exists a polyhedral cone $\Pi'\subseteq f^* \Nef(X)$ such that 
\begin{equation}\label{eq:909}
f^* \Amp(X)\subseteq \bigcup_{\sigma'\in\Cr(X',D')}\sigma'(\Pi').
\end{equation}
There exist classes $x_1,\dots,x_\ell\in \Nef(X)$ such that $\Pi'=\sum_{i=1}^\ell\R_{\geq0}f^*x_i$. If we denote
$$\Pi\coloneqq f_*\Pi'=\sum_{i=1}^\ell\R_{\geq0}x_i\subseteq \Nef(X),$$
then $\Pi$ is a polyhedral cone, and we clearly have $\Pi'=f^*\Pi$. This, together with \eqref{eq:909}, gives
\begin{align*}
\Amp(X)&=f_*f^*\Amp(X)\subseteq f_*\Big(\bigcup\nolimits_{\sigma'\in\Cr(X',D')}\sigma'(\Pi')\Big)\\
&= \bigcup\nolimits_{\sigma'\in\Cr(X',D')}(f_*\circ\sigma')(\Pi')=\bigcup\nolimits_{\sigma'\in\Cr(X',D')}(f_*\circ\sigma'\circ f^*)(\Pi).
\end{align*}
By Step 1 and by Theorem \ref{thm:cremonaiso} we have $f_*\circ \sigma' \circ f^*\in \Cr(X,D)$ for each $\sigma'\in\Cr(X',D')$, hence by the above we conclude that
$$ \Amp(X)\subseteq \bigcup\nolimits_{\sigma\in\Cr(X,D)}\sigma(\Pi).$$
Thus, the action of $\Cr(X,D)$ on $\Nef(X)$ is of polyhedral type. Since we have $\Nef(X) = \Nef^e(X)$ by Theorem \ref{thm:nefandeffectivecones}, there exists a rational polyhedral fundamental domain for the action of the group $\Cr(X,D)$ on $\Nef^e(X)$ by Theorem \ref{thm:Looijenga}.
\end{proof}

\section{Proofs of Theorems \ref{thm:main0} and \ref{thm:main1}}

In this section we finally prove Theorems \ref{thm:main0} and \ref{thm:main1}. We assume Theorem \ref{thm:main2}, which will be proved in the sequel to this paper \cite{LSX26}.

We will need the following lemma in the proof of Theorem \ref{thm:main1}.

\begin{lem}\label{lem:semiample}
    Let $(X,B+M)$ be a klt Calabi--Yau generalised pair of dimension $2$. Let $D\geq0$ be a $\Q$-Cartier divisor such that $D\sim_\Q{-}K_X$, and let $P$ be the positive part in the Zariski decomposition of $D$. If $\kappa(X,{-}K_X)\geq 1$, then $P$ is semiample.
\end{lem}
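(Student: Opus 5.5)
The plan is to reduce to the smooth case via the terminalisation, and then show that $P$ is a nef divisor which is either big or has Iitaka dimension $1$ with $P^2=0$; in both cases we aim for semiampleness.

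First I would establish the basic properties of $P$. By Lemma \ref{lem:Zariski}(e), $P$ is a $\Q$-divisor with $\kappa(X,P)=\kappa(X,D)=\kappa(X,{-}K_X)\geq1$, and $P$ is nef by Lemma \ref{lem:Zariski}(a). By Lemma \ref{lem:prepared}, $(X,N+P)$ is a klt Calabi--Yau generalised pair, so $K_X+N\equiv{-}P$ with $(X,N)$ klt.

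Next I would invoke an abundance-type result. Consider the klt pair $(X,N)$. The divisor ${-}(K_X+N)\equiv P$ is nef, so the generalised pair $\big(X,N+(P+P)\big)$ has $K_X+N+2P\equiv P$ nef. Applying \cite[Theorem 3.13]{HanLiu20} (as in the proof of Lemma \ref{lem:nonvanishing}(d)), I expect to obtain that $P$ is num-semiample, or that $P\equiv \alpha P$ trivially. That does not directly give semiampleness, however, so I would argue more concretely by splitting into two cases.

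If $\kappa(X,P)=2$, then $P$ is nef and big, and $P-(K_X+N)\equiv 2P$ is nef and big. The basepoint-free theorem for the klt pair $(X,N)$ (valid for $\Q$-factorial surfaces) then gives that $P$ is semiample, after passing from numerical to $\Q$-linear equivalence; on a surface with $\kappa(X,{-}K_X)\geq0$ this is harmless by Riemann--Roch. In fact I would apply it to $mP$ with $mP-(K_X+N)$ nef and big.

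If $\kappa(X,P)=1$, then $\nu(X,P)=1$ since $P$ is not big, so $P^2=0$. I would use the Iitaka fibration: write $|mP|=|F_m|+\mathrm{Fix}$ and use the fact that the mobile part of $|mP|$ defines a pencil whose general member is a multiple of a fibre class $F$ with $F^2=0$. Since $P$ is nef with $P^2=0$ and $P\geq$ a positive multiple of $F$, the Hodge index theorem forces the fixed part $Z$ to satisfy $Z\cdot F=0$ and $Z^2=0$. Zariski's lemma on the fibration then shows that $Z$ is a rational combination of fibres, so $mP\sim_\Q cF$ is semiample.

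The main obstacle is the Hodge index/Zariski-lemma step in the case $\kappa=1$ on a possibly singular $X$. I would handle it by first passing to the terminalisation $f\colon X'\to X$ from Theorem \ref{thm:terminalisation}, where $X'$ is smooth. Since $f^*P$ is the positive part of $f^*D$ and semiampleness descends along $f_*$ when $P=f_*f^*P$ with $f^*P$ semiample, I would prove the statement on the smooth surface $X'$ and push it forward.
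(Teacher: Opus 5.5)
Your proposal is correct. It differs from the paper mainly in the case $\kappa(X,P)=1$.

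\textbf{The big case.} You apply the basepoint-free theorem for the klt pair $(X,N)$ to $mP$, using $amP-(K_X+N)\equiv(am+1)P$. The paper instead writes $P\sim_\Q E+A$ via \cite[Proposition 2.61]{KM98} and applies abundance to the klt surface pair $(X,N+2E+2A)$, whose log canonical divisor is $\Q$-linearly equivalent to $P$. These are interchangeable, and yours is slightly more direct. No Riemann--Roch or passage to $\Q$-linear equivalence is needed, since the hypotheses of the basepoint-free theorem are numerical. The detour through \cite[Theorem 3.13]{HanLiu20} can be dropped.

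\textbf{The case $\kappa(X,P)=1$.} The paper just quotes \cite[Proposition 2.1]{Kaw85}: since $\nu(X,P)=\kappa(X,P)$, a birational pullback of $P$ is $\Q$-linearly equivalent to the pullback of a nef and big, hence ample, divisor on a curve. You reprove this by hand on a smooth model, which works and uses only surface theory. The descent along $f$ needs only $f_*\OO_{X'}=\OO_X$. You also need only that $f^*P$ is nef with $\kappa=1$ and $(f^*P)^2=0$, not that it is the positive part of $f^*D$.

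You should tidy the order of the argument. From $0=(mP)^2=mP\cdot M+mP\cdot Z$ and $mP\cdot M=M^2+M\cdot Z$, with all terms non-negative, you get $M^2=M\cdot Z=Z^2=0$ directly; no Hodge index is needed. It is $M^2=0$ that makes the mobile part $|M|$ basepoint free. Hence $M\sim g^*A$ for a fibration $g$ and an ample $A$ on the base curve. Zariski's lemma then gives $Z=g^*Z_C$ for an effective $\Q$-divisor $Z_C$. So the clean conclusion is $mP\sim_\Q g^*(A+Z_C)$ with $\deg(A+Z_C)>0$, rather than $mP\sim_\Q cF$.

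The paper's route buys brevity; yours is elementary and self-contained.
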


\begin{proof}
	By Lemma \ref{lem:Zariski}(e) we have $\kappa(X,P)=\kappa(X,D)=\kappa(X,-K_X)\geq 1$.
    
    If $\kappa(X,P)=2$, then $P$ is a nef and big divisor. If $N$ is the negative part in the Zariski decomposition of $D$, then $(X,N)$ is a klt pair by Lemma \ref{lem:prepared}. Thus, by \cite[Proposition 2.61]{KM98} there exists an effective $\Q$-divisor $E$ and an ample $\Q$-divisor $A$ such that $P\sim_\Q E+A$ and the pair $(X,N+2E+2A)$ is klt. Since $K_X+N+P\sim_\Q0$, we have $K_X+N+2E+2A\sim_\Q P$, hence $K_X+N+2E+2A$ is nef and big, thus semiample by the Abundance theorem for klt surface pairs.

    Now consider the case when $\kappa(X,P)=1$. Since $P$ is nef but not big, we have $\nu(X,P)\leq 1$, hence $\nu(X,P)=\kappa(X,P)=1$. By \cite[Proposition 2.1]{Kaw85} there exists a birational morphism $\mu\colon Y\to X$, a fibration $f\colon Y\to Z$, and a big and nef divisor $D$ on $Z$ such that
    \begin{equation}\label{eq:9000}
    \mu^*P\sim_\Q f^*D.
	\end{equation}
	In particular, we have
    $$ \dim Z=\kappa(Z,D)=\kappa(Y,f^*D)=\kappa(Y,\mu^*P)=\kappa(X,P)=1, $$
    hence $Z$ is a curve. Consequently, the divisor $D$ is ample, which implies that $P$ is semiample by \eqref{eq:9000}, as desired.
\end{proof}

Now we can finally prove Theorem \ref{thm:main0} and \ref{thm:main1}.

\begin{proof}[Proof of Theorem \ref{thm:main1}]
The first statement of the theorem follows immediately from Lemma \ref{lem:nonvanishing}(b). We prove parts (a) and (b) in the following two steps.

\medskip

\emph{Step 1.}
In this step we prove (a).

Let $D\geq0$ be a $\Q$-Cartier divisor such that $D\sim_\Q{-}K_X$, and let $D=N+P$ be the Zariski decomposition of $D$. Then $(X,N+P)$ is a klt Calabi--Yau generalised pair by Lemma \ref{lem:prepared}, and $P\geq0$ by Lemma \ref{lem:Zariski}(d).

Assume first that $\kappa(X,{-}K_X)\geq1$ or that $P=0$. Then the divisor $P$ is semiample by Lemma \ref{lem:semiample}, and thus by Bertini's theorem there exists an effective divisor $P_1\sim_\Q P$ such that, if we set $B_1\coloneqq N+P_1$, then $(X,B_1)$ is a klt Calabi--Yau pair. We conclude that the first statement in (a) holds by \cite[Theorem 4.1]{Tot10}.

Now assume that $X$ is not rational. The previous paragraph deals with the case when $P=0$. Thus, we may assume that, additionally, $P\neq 0$. Then the cone $\Nef(X)$ is rational polyhedral by Corollary \ref{cor:nonrational}, and $\Nef^e(X)=\Nef(X)$ by Theorem \ref{thm:nefandeffectivecones}(b). This proves the second statement in (a) by \cite[Corollary 6.6]{Laz13}.

\medskip

\emph{Step 2.}
In this step we show (b) and (c).

Let $f\colon (X',B')\to(X,B)$ be the $\Q$-factorial terminalisation of $(X,B)$ as in Theorem \ref{thm:terminalisation}. Then $X'$ is smooth, and $(X',B'+f^*M)$ is a klt Calabi--Yau generalised pair. By Lemma \ref{lem:pushforward}(a) we have $\kappa(X',{-}K_{X'})\leq\kappa(X,{-}K_X)=0$, and by Lemma \ref{lem:nonvanishing}(b) we have $\kappa(X',{-}K_{X'})\geq0$. Therefore,
\begin{equation}\label{eq:kappa0}
\kappa(X',{-}K_{X'})=0.
\end{equation}
Let $D'\geq0$ be a $\Q$-Cartier divisor such that $D'\sim_\Q{-}K_{X'}$, and let $D'=N'+P'$ be the Zariski decomposition of $D'$. Then $(X',N'+P')$ is a klt Calabi--Yau generalised pair by Lemma \ref{lem:prepared}, and $P'\geq0$ by Lemma \ref{lem:Zariski}(d).

First assume that $P'=0$. Then $D'=N'$ and $(X',D')$ is a klt Calabi--Yau surface pair. Moreover, we have $ f_*D'\sim_\Q{-}f_*K_{X'}\sim_\Q{-}K_X\sim_\Q D $, and since $\kappa(X,D)=0$, we conclude that $D=f_*D'$. Furthermore, since $K_X+D\sim_\Q0$ and $K_{X'}+D'\sim_\Q0$, we have $K_{X'}+D'\sim_\Q f^*(K_X+D)$. Thus, by \cite[Lemma 2.30]{KM98} we conclude that the pair $(X,D)$ is also klt. Therefore, $\mathrm{(b)}_1$ holds by \cite[Theorem 4.1]{Tot10}.

Now assume that $P'\neq0$. Since $\kappa(X',P')=\kappa(X',{-}K_{X'})=0$ by Lemma \ref{lem:Zariski}(e) and by \eqref{eq:kappa0}, the divisor $P'$ is nef but not big, hence $(P')^2=0$. Then by Theorem \ref{thm:sak84}, there exists a birational morphism $\pi\colon X'\to X_0$ to a Sakai surface $X_0$. This shows $\mathrm{(b)}_2$.

Finally, assume that $\Delta_{X_0,D_0}^{\nod}=\emptyset$ and that there exists a rational polyhedral fundamental domain for the action of $\Cr(X_0,D_0)$ on $\Nef^e(X_0)$. Then by Theorem \ref{thm:liftingoneblowup} there exists a rational polyhedral fundamental domain for the action of $\Cr(X',D')$ on $\Nef(X')$. Moreover, we have $\Delta_{X',D'}^{\nod}=\emptyset$ by Theorem \ref{thm:sak84}(f). Therefore, by Theorem \ref{thm:descent} there exists a rational polyhedral fundamental domain for the action of $\Cr(X,D)$ on $\Nef^e(X)$. This proves (c) and finishes the proof of the theorem.
\end{proof}

\begin{proof}[Proof of Theorem \ref{thm:main0}]
Let $(X,B+M)$ be a very general klt Calabi--Yau generalised pair of dimension $2$. This means that Theorem \ref{thm:main1}(a) holds, or that Theorem \ref{thm:main1}$\mathrm{(b)}_1$ holds, or that Theorem \ref{thm:main1}$\mathrm{(b)}_2$ holds, where $X_0$ is a generic Sakai surface. Therefore, by Theorem \ref{thm:main2}, which is proved in \cite{LSX26}, we conclude that there exists a rational polyhedral fundamental domain for the action of the group $G$ on $\Nef^e(X)$, where $G$ is either the representation of the group $\Aut(X)$ in $\GL\big(N^1(X)_\R\big)$, or it is the group $\Cr(X,D)$, where $D$ is as in Theorem \ref{thm:main1}(b). In both cases we have that $G$ is a subgroup of $\Cr(X)$.

Therefore, the action of $G$ on $\Nef^e(X)$ is of polyhedral type. Since $G\subseteq\Cr(X)$, this clearly implies that the action of $\Cr(X)$ on $\Nef^e(X)$ is of polyhedral type. But then by Theorem \ref{thm:Looijenga} we conclude that there exists a rational polyhedral fundamental domain for the action of the group $\Cr(X)$ on $\Nef^e(X)$, as desired.
\end{proof}

\newpage

\appendix

\section{On Cremona isometries}

In this appendix, we show that for a generalised Halphen surface $X$, several conditions in the definition of a Cremona isometry of $X$ from Definition \ref{dfn:cremona} are in fact redundant.

\begin{prop}\label{prop:groups}
Let $X$ be a generalised Halphen surface such that the cone $\Eff(X)$ is not rational polyhedral. Let $\sigma$ be an automorphism of the vector space $N^1(X)_\R$ such that:
    \begin{enumerate}[\normalfont (i)]
        \item $\sigma$ preserves the intersection form on $N^1(X)_\R$,
        \item $\sigma$ preserves the effective cone $\Eff(X)$.
    \end{enumerate}
    Then $\sigma\in\Cr(X)$.
\end{prop}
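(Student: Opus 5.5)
We must check the two remaining conditions of Definition \ref{dfn:cremona}: that $\sigma$ maps the lattice $N^1_W(X)=N^1(X)$ onto itself (here $X$ is smooth), and that $\sigma$ fixes $[K_X]$. Throughout we use Lemma \ref{lem:char_of_indecomp}: $X$ is the blowup of $\pr^2$ at nine points, ${-}K_X$ is nef, $K_X^2=0$, and $N^1(X)$ is the odd unimodular lattice $I_{1,9}$ with $K_X^\perp\cong E_8^{(1)}$ (affine, degenerate in the direction $K_X$).

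\emph{Step 1: $\sigma$ preserves the lattice.} Since $\sigma$ preserves $\Eff(X)$ and is linear, it preserves $\overline{\Eff}(X)$, and it maps extremal rays to extremal rays. By Lemma \ref{lem:rational_Mori_cone} applied with $B$ the unique divisor of canonical type in $|{-}K_X|$ (so ${-}(K_X+B)\equiv 0$ is nef), $\overline{\Eff}(X)$ is the convex hull of $\overline{\mathcal C_X^+}$ together with the classes of $(-1)$-curves and $(-2)$-curves. Extremal rays spanned by classes of negative square are therefore spanned by integral classes of $(-1)$- or $(-2)$-curves. The assumption that $\Eff(X)$ is not rational polyhedral gives infinitely many $(-1)$-curves. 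I would argue that $\sigma$ maps the ray of a $(-1)$-curve $E$ to the ray of some curve $C$ with $C^2<0$; comparing squares via (i), $\sigma([E])=\lambda[C]$ with $\lambda^2C^2={-}1$, which forces $C^2={-}1$ and $\lambda=1$, since $C^2\in\{-1,-2\}$ and $\lambda$ must be rational. Here I would use that $\sigma$ maps effective integral classes to effective classes, and a small argument (as in Lemma \ref{lem:sigmanodal}, via Lemma \ref{lem:negativeexceptional}) shows that the image of an irreducible negative curve is again the class of an irreducible curve. Hence $\sigma$ permutes the set of classes of $(-1)$-curves, and similarly of $(-2)$-curves. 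Finally, classes of $(-1)$-curves span $N^1(X)$ over $\Z$: the exceptional curves $E_1,\dots,E_9$ of a blowdown together with the strict transform of a line through two blown-up points, which realises $H-E_1-E_2$, generate the lattice. Realising such curves may need some care when points are infinitely near; alternatively one can use the infinitely many $(-1)$-curves together with unimodularity. Hence $\sigma(N^1(X))=N^1(X)$.

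\emph{Step 2: $\sigma$ fixes $[K_X]$.} For every $(-1)$-curve $E$ we have $K_X\cdot E={-}1$. Set $k\coloneqq\sigma^{-1}([K_X])$. Since $\sigma$ permutes $(-1)$-classes and preserves the form, $k\cdot e=K_X\cdot\sigma(e)={-}1$ for every $(-1)$-class $e$. Thus $k-[K_X]$ is orthogonal to all $(-1)$-classes, which span $N^1(X)_\R$, so $k=[K_X]$ by nondegeneracy. Hence $\sigma([K_X])=[K_X]$.

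The main obstacle is Step 1: showing rigorously that $\sigma$ sends the class of a $(-1)$-curve exactly to the class of a $(-1)$-curve, rather than to a nonintegral multiple or to a $(-2)$-class. Here the key constraint is $\sigma([E])^2={-}1$ together with the description of the extremal rays. I would also verify, in the presence of infinitely near points, that the $(-1)$-curves span $N^1(X)$. If spanning fails, I would use that $\Eff(X)$ is not rational polyhedral to find infinitely many $(-1)$-classes and argue that they span, since they form an orbit under the affine Weyl group.
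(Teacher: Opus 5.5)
Your proposal has a genuine gap at the point you yourself flagged, and the justification you give for it is circular. The extremal-ray argument only gives $\sigma([E])=\lambda[C]$ with $\lambda\in\R_{>0}$ and $C$ an irreducible curve with $C^2\in\{-1,-2\}$. You then say that $\lambda$ ``must be rational''. But $\sigma$ is only assumed to be $\R$-linear, so this presupposes that $\sigma$ preserves $N^1(X)_\Q$, which is part of what has to be proved. Nothing in your argument excludes $\sigma([E])=\tfrac{1}{\sqrt2}[C]$ with $C$ a $(-2)$-curve, nor, symmetrically, a $(-2)$-class going to $\sqrt2$ times a $(-1)$-class.

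The missing idea, which is how the paper proceeds, is to control $[K_X]$ \emph{first}:
\begin{enumerate}
\item[(1)] Since ${-}K_X$ is nef and not torsion, and $\Eff(X)$ is not rational polyhedral, the extremal rays of $\Eff(X)$ accumulate only at $\R_{\geq0}[{-}K_X]$ by \cite[Proposition 6.2]{Bor91}. As $\sigma$ permutes extremal rays and is continuous, $\sigma([K_X])=p[K_X]$ for some $p>0$.
\item[(2)] Hence $\sigma$ preserves $K_X^\perp$. This hyperplane contains the classes of all $(-2)$-curves and of no $(-1)$-curve, by Lemma \ref{lem:negativeselfintersection} with $B=0$. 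So $\sigma$ cannot exchange the two kinds of negative rays, and comparing squares gives $\lambda=1$ in both cases.
\item[(3)] Pairing a $(-1)$-curve $E$ with $\sigma([K_X])=p[K_X]$ gives $-p=K_X\cdot E=-1$, so $p=1$.
\end{enumerate}

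Your second ingredient is false in general: the classes of $(-1)$-curves need not span $N^1(X)$ over $\Z$, or even over $\R$. Both your lattice argument and your Step 2 depend on it, so both fail as written. For instance, take a Sakai surface of type $D_4^{(1)}$. Every $(-1)$-curve $E$ satisfies $D\cdot E={-}K_X\cdot E=1$ and is not a component of $D$. So $E$ is disjoint from the component of $D$ of multiplicity $2$, and all $(-1)$-classes lie in a single hyperplane, however many of them there are. Moreover, when the nine points are infinitely near, as for most Sakai types, the classes $E_1,\dots,E_9$ and $H-E_1-E_2$ are typically reducible, so they are not $(-1)$-curves.

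Your Step 2 is superseded by the argument above. For the lattice you must also use the $(-2)$-curves and $[K_X]$. By \cite[Lemma 4.1]{LH05}, every integral effective class is a nonnegative integral combination of $[{-}K_X]$ and classes of negative curves. Once $\sigma$ fixes $[K_X]$ and permutes the classes of negative curves, it therefore maps integral effective classes into $N^1(X)$. Finally, every integral class is a difference of two integral ample classes, so $\sigma(N^1(X))\subseteq N^1(X)$; applying the same argument to $\sigma^{-1}$ gives equality.
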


\begin{proof}
Let $\mathcal N(X)\subseteq N^1(X)$ be the set of numerical classes of prime divisors on $X$ with negative self-intersection. Let $D \in |{-}K_X|$ be a divisor of canonical type. 

\medskip

\emph{Step 1.}
In this step we show that there exists $p>0$ such that $\sigma\big([K_X]\big)=p[K_X]$.

By Lemma \ref{lem:char_of_indecomp}(b), there exists a birational morphism $\pi\colon X\to\mathbb P^2$. Then $\pi_*K_X\sim K_{\mathbb P^2}$, hence $K_X$ is not torsion. By Lemma \ref{lem:char_of_indecomp}(a), the divisor ${-}K_X$ is nef. Therefore,  since the cone $\Eff(X)$ is not rational polyhedral by assumption, by \cite[Proposition 6.2]{Bor91} we conclude that there is a sequence $\{R_n\}_{n\in\N}$ of extremal rays of $\Eff(X)$ which accumulate to $\R_{\geq0}[{-}K_X]$. Then the sequence of rays $\{\sigma(R_n)\}_{n\in\N}$ accumulates to the ray $\R_{\geq0}\sigma\big([{-}K_X]\big)$. On the other hand, since each $\sigma(R_n)$ is an extremal ray of $\Eff(X)$ by assumption (ii), we conclude by op.\ cit.\ that the sequence of rays $\{\sigma(R_n)\}_{n\in\N}$ accumulates to the ray $\R_{\geq0}[{-}K_X]$. Thus, $\R_{\geq0}\sigma\big([{-}K_X]\big)=\R_{\geq0}[{-}K_X]$, hence the claim.

\medskip

\emph{Step 2.}
We claim that for each $\alpha\in\mathcal N(X)$ we have $\alpha^2\in\{-1,-2\}$. Moreover, if $\alpha^2={-}2$, then $\alpha\cdot [K_X]=0$, and if $\alpha^2={-}1$, then $\alpha\cdot [K_X]={-}1$. 

Indeed, let $G$ be a prime divisor on $X$ such that $G^2<0$. Since ${-}K_X$ is nef by Lemma \ref{lem:char_of_indecomp}(a), we conclude that $G^2\in\{-1,-2\}$ by Lemma \ref{lem:negativeselfintersection}(a) applied for $B\coloneqq 0$. If $G^2={-}1$, then $G$ is a $({-}1)$-curve by Lemma \ref{lem:negativeselfintersection}(c). If $G^2={-}2$, then $G\cdot K_X=0$ by Lemma \ref{lem:negativeselfintersection}(b) applied for $B\coloneqq 0$.

\medskip

\emph{Step 3.}
Let $\alpha\in\mathcal N(X)$ with $\alpha^2={-}1$. In this step we show that $\sigma(\alpha)\in\mathcal N(X)$.

Since $\sigma(\alpha)\in\Eff(X)$ by assumption (ii), there exists an effective $\R$-divisor $G$ on $X$ such that
$$\sigma(\alpha)=[G].$$
Since $\alpha^2<0$, by Lemma \ref{lem:negativeexceptional} we have that the ray $\R_{\geq0}\alpha$ is an extremal ray of the cone $\Eff(X)$, and therefore, the ray $\R_{\geq0}[G]=\R_{\geq0}\sigma(\alpha)$ is an extremal ray of the cone $\Eff(X)$. This forces the support of $G$ to be irreducible, so we can write 
$$G=f F, \quad\text{where $f>0$ and $F$ is a prime divisor on $X$}.$$
Then by assumption (i) and by Step 2 we have 
$$ f^2 F^2=G^2=\sigma(\alpha)^2=\alpha^2={-}1\quad\text{and}\quad F^2\in\{-1,-2\}. $$
If $F^2={-}1$, then $f=1$ by above, and hence $\sigma(\alpha)=[F]\in\mathcal N(X)$.

Thus, we may assume that $F^2={-}2$, hence $K_X\cdot F=0$ and $[K_X]\cdot \alpha={-}1$ by Step 2. As $\sigma\big([K_X]\big)=p[K_X]$ for some $p>0$ by Step 1, we conclude by assumption (i) that
$$0=pf[K_X]\cdot[F]=\sigma\big([K_X]\big)\cdot \sigma(\alpha)=[K_X]\cdot \alpha={-}1,$$
a contradiction.

\medskip

\emph{Step 4.}
Let $\alpha\in\mathcal N(X)$ with $\alpha^2={-}2$. In this step we show that $\sigma(\alpha)\in\mathcal N(X)$.

As in Step 3 we can show that
$$\sigma(\alpha)=f[F], \quad\text{where $f>0$ and $F$ is a prime divisor on $X$}.$$
Then by assumption (i) and by Step 2 we have 
\begin{equation}\label{eq:equalities2}
f^2 F^2=\sigma(\alpha)^2=\alpha^2={-}2\quad\text{and}\quad F^2\in\{-1,-2\}.
\end{equation}
If $F^2={-}2$, then $f=1$ by above, and hence $\sigma(\alpha)=[F]\in\mathcal N(X)$.

Thus, we may assume that $F^2={-}1$. By Step 3 we have $\sigma^{-1}\big([F]\big)\in\mathcal N(X)$, hence there exists a prime divisor $P$ on $X$ such that $\sigma^{-1}\big([F]\big)=[P]$. Since $\alpha\in\mathcal N(X)$, there exists a prime divisor $Q$ on $X$ such that $\alpha=[Q]$. Therefore,
$$[Q]=\alpha=\sigma^{-1}\big(f[F]\big)=[fP],$$
hence $Q=fP$ by Lemma \ref{lem:negativeexceptional}. As $P$ and $Q$ are prime divisors, this forces $f=1$, which contradicts \eqref{eq:equalities2} since $F^2={-}1$.

\medskip

\emph{Step 5.}
In this step we show that $\sigma\big([K_X]\big)=[K_X]$.

By Step 1 there exists $p>0$ such that $\sigma\big([K_X]\big)=p[K_X]$. Since $X$ is the blowup of $\mathbb{P}^2$ at nine points by Lemma \ref{lem:char_of_indecomp}(b), there exists $\beta\in N^1(X)$ which is the numerical class of a $({-}1)$-curve on $X$. By Step 3 we have $\sigma(\beta)\in\mathcal N(X)$, and since $\sigma(\beta)^2=\beta^2={-}1$ by assumption (i), we have $[K_X]\cdot\sigma(\beta)={-}1$ by Step 2. Hence, assumption (i) and Step 2 give
$${-}p=p[K_X]\cdot\sigma(\beta)=\sigma\big([K_X]\big)\cdot \sigma(\beta)=[K_X]\cdot \beta={-}1.$$
This proves the claim.

\medskip

\emph{Step 6.}
Finally, in this step we show that $\sigma$ maps the lattice $N^1(X)$ onto itself.

Consider first $\alpha\in \Eff(X)\cap N^1(X)$. Then by \cite[Lemma 4.1]{LH05},\footnote{Note that \cite[Lemma 4.1]{LH05} holds under the assumption that $X$ is an \emph{anticanonical} rational surface, which means that $|{-}K_X|\neq\emptyset$.}  there exist classes $\alpha_1,\dots,\alpha_n\in\mathcal N(X)$ and positive integers $a_0,a_1,\dots,a_n$ such that $\alpha=a_0[{-}K_X]+a_1\alpha_1+\dots +a_n\alpha_n$. But then $\sigma(\alpha)=a_0\sigma\big([{-}K_X]\big)+ a_1\sigma(\alpha_1)+\dots +a_n\sigma(\alpha_n)$, which belongs to $N^1(X)$ by Steps 3, 4 and 5.

Now let $\alpha\in N^1(X)$. Then there exist ample integral classes $\beta$ and $\gamma$ such that $\alpha=\beta-\gamma$, hence $\sigma(\alpha)=\sigma(\beta)-\sigma(\gamma)\in N^1(X)$ by the previous paragraph. This concludes the proof.
\end{proof}

\section{Generic Sakai surfaces}\label{sec:generic_good_Halphen}

Let $X$ be a Sakai surface of type $R$. The geometry of $X$ depends not only on its type $R$, but also on the configuration of blownup points. In particular, the existence of internal $(-2)$-curves depends on this configuration. We introduce the following definition.

\begin{dfn}\label{dfn:generic_Sakai}
Let $X$ be a Sakai surface and let $D\in|{-}K_X|$. We say that $X$ is \emph{generic} if $\Delta^{\nod}_{X,D} = \emptyset$. 
\end{dfn}

The choice of the term \emph{generic} is justified by the following Proposition \ref{prop:noNodalOnGen1}. As explained in detail in \cite[Section 5 and Appendix B]{Sak01}, for a fixed type $R$, the surface $X$ depends on several parameters which are complex numbers, and by varying these parameters we obtain all possible $R$-surfaces as fibres of a family over an affine base. Then the main result of this appendix, Proposition \ref{prop:noNodalOnGen1}, says that a generic Sakai surface $X$ of type $R$ is a very general fibre of the deformation from the previous sentence.

Before proving Proposition \ref{prop:noNodalOnGen1}, we first need to show that for every type $R$ there exists a Sakai surface of type $R$. We start with the following lemma.

\begin{lem}\label{lem:mult_types}
    Let $X$ be an $R$-surface, where the type $R$ of $X$ is either elliptic or multiplicative, and let $D\in |{-}K_X|$. By Lemma \ref{lem:char_of_indecomp}(b), there exists a morphism $\mu\colon X\to \pr^2$ which is the composition of blowups of nine (possibly infinitely close) points $p_1,\ldots, p_9$, as in the blowup configurations from \cite[Appendix B]{Sak01}. Then the following holds.
    \begin{enumerate}[\normalfont (a)]
        \item If $D$ has at most three components, i.e.\ if $R\in\{A_0^\smone, A_0^\smonestar, A_1^\smone, A_2^\smone \}$, then $\mu$ induces an isomorphism $D\simeq \mu_* D$ and $p_1,\ldots,p_9$ lie on the smooth locus of $\mu_* D$. The divisor $\mu_*D$ does not depend on the configuration of the points $p_1,\dots,p_9$.
        \item Otherwise, we have $R\in \{ A_3^\smone, A_4^\smone, A_5^\smone, A_6^\smone, A_7^\smone,{A_7^\smone}', A_8^\smone\}$. Then there exist a permutation $\sigma\in S_9$ and an integer $2\leq k\leq7$ such that the following conditions are satisfied:
        \begin{enumerate}[\normalfont (i)]
            \item $\mu$ factors as $X\xrightarrow{\lambda} Y\xrightarrow{\nu}\pr^2$, where $\nu$ is the composition of the blowups of $p_{\sigma(1)},\ldots,p_{\sigma(k-1)}$ and $\lambda$ is the composition of the blowups of $p_{\sigma(k)},\ldots, p_{\sigma(9)}$,
            \item $\lambda$ induces an isomorphism $D\simeq \lambda_* D$,
            \item $p_{\sigma(k)},\ldots, p_{\sigma(9)}$ lie on the smooth locus of $\lambda_* D$.
        \end{enumerate}
        Up to projective transformations of $\mathbb P^2$, the surface $Y$ and the divisor $\lambda_*D$ depend only on the type $R$.
    \end{enumerate} 
\end{lem}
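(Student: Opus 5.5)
The plan is to read everything off Sakai's explicit blowup configurations in \cite[Appendix B]{Sak01} and to argue by components of $D$.

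The basic tool is this. If $C$ is a component of $D$ passing through a blown-up point $p$, then the strict transform of $C$ has self-intersection one lower, and $D$ loses nothing only if $p$ is a smooth point of the support and lies on exactly one component with multiplicity one. Since $D\in|{-}K_X|$, we have $D=\mu^*\mu_*D-\sum(\text{exceptional})$ as total transforms. So a component of $D$ is the strict transform of a curve in $\pr^2$ exactly when it is not $\mu$-exceptional. The components of $D$ that map to curves in $\pr^2$ give $\mu_*D$, a cubic. The remaining components of $D$ are exceptional curves, namely those over points that were blown up at singular points of the support, or infinitely near chains.

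For (a), the elliptic and multiplicative types with at most three components are $A_0^\smone$ (smooth cubic), $A_0^\smonestar$ (nodal cubic), $A_1^\smone$ (conic plus line) and $A_2^\smone$ (triangle). By Sakai's table each component is the strict transform of a plane component, and the self-intersections (all $-2$ after nine blowups, with $9$ incidences distributed) show that no point of $p_1,\dots,p_9$ is a node of $\mu_*D$. If some $p_i$ were a node, an exceptional $(-2)$-curve would appear as a component of $D$ and raise the number of components. So $\mu|_D$ is an isomorphism, and all $p_i$ lie on the smooth locus, including the infinitely near ones, which lie on strict transforms of $D$. The cubic $\mu_*D$ is unique up to projective transformations in the nodal, conic+line and triangle cases. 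In the smooth cubic case it depends on a $j$-invariant, but not on the points $p_i$, which is what is claimed.

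For (b), I would run through the types $A_3^\smone,\dots,A_8^\smone$ and ${A_7^\smone}'$ one by one using Sakai's configurations. In each case the components of $D$ that are $\mu$-exceptional arise from blowing up nodes of the cubic, or of the current anticanonical cycle, at the first $k-1$ steps. Let $\sigma$ order the points so that these node blowups $p_{\sigma(1)},\dots,p_{\sigma(k-1)}$ come first. Then $Y$ together with the cycle $\lambda_*D$ is a rigid toric-type configuration: a cycle of rational curves on an iterated blowup of $\pr^2$ at nodes of a triangle, conic+line or nodal cubic. These are determined up to $\mathrm{PGL}_3$ by the combinatorial type. The remaining points are blown up at smooth points of the cycle, so $\lambda$ restricts to an isomorphism on $D$; this gives (i)–(iii). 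The bound $2\le k\le 7$ comes from counting: in type $A_n^\smone$ the cycle has $n+1$ components, and at most three of them come from $\pr^2$.

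The main obstacle is the case-by-case verification that the points can be reordered in this way. In particular, a point blown up at a smooth point of the cycle must never be followed by a node blowup infinitely near it. To check this I would verify that Sakai's configurations only place infinitely near points along smooth branches of components.
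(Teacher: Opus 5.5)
Your proposal is essentially the paper's argument: everything is read off from Sakai's configurations in \cite[Appendix B]{Sak01}, where in every case of (b) the cubic $\mu_*D$ is the triangle $xyz=0$. The paper's explicit permutations (e.g.\ $\sigma=(1\,8\,7\,6\,5\,4\,3\,2)$ and $k=2$ for $A_3^\smone$, so that $\nu$ blows up only the vertex $p_8$) do exactly what you describe, namely move the node blowups to the front.

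Two small remarks. The obstacle you flag is automatic: in multiplicative type, the exceptional curve of a blowup at a smooth point of the cycle is not a component of the new anticanonical cycle, so no point infinitely near it can be a node. Also, your counting only yields $k\geq 2$; the bound $k\leq 7$ needs that $\mu_*D$ has exactly three components, which again comes from Sakai's list.
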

\begin{proof}
    Part (a) follows directly from \cite[Appendix B]{Sak01}.
    
    For part (b), we explain in detail the case $R=A_3^\smone$ as in \cite[Appendix B]{Sak01}. In this case, up to a change of coordinates, $X$ is obtained by blowing up points on the cycle $C$ of three lines $\{x=0\}$, $\{y=0\}$ and $\{z=0\}$ in $\mathbb P^2$; the points $p_1,\ldots,p_7$ lie on the smooth locus of $C$, and the point $p_9$ lies on the exceptional curve over $p_8$, but not on the strict transforms of the lines $\{x=0\}$ and $\{z=0\}$. In coordinates, the points are given by
\begin{align*}
    &p_1\colon (a_1:0:1),\quad p_2\colon (a_2:0:1),\quad p_3\colon (a_3:0:1), \quad p_4\colon (0:1:a_4),\\
    &p_5\colon (0:1:a_5),\quad p_6\colon (1:a_6:0),\quad p_7\colon (1:a_7:0), \quad p_8 \colon (0:1:0), \\
    &p_9\in \textstyle \Spec \C\big[\frac{x}{y},\frac{z}{y}\big/\frac{x}{y}\big], \quad p_9\colon \big(\frac{x}{y},\frac{z}{x}\big) = (0,a_8),
\end{align*}
where $a_i\neq 0$ for all $i\in\{1,\ldots,8 \}$.

\begin{center}    
\begin{tikzpicture}[scale=0.8]

\draw (-0.15,0) -- (3.9,0) node[right] {$y=0$};
\draw (0.25,-0.35) -- (2.15,2.7);
\draw (3.55,-0.35) -- (1.65,2.7);

\fill (1.05,0) circle (2pt) node[below=7pt] {$p_1$};
\fill (1.95,0) circle (2pt) node[below=7pt] {$p_2$};
\fill (2.85,0) circle (2pt) node[below=7pt] {$p_3$};

\fill (1.12,1.05) circle (2pt) node[left=8pt] {$p_5$};
\fill (1.50,1.65) circle (2pt) node[left=8pt] {$p_4$};

\fill (2.68,1.05) circle (2pt) node[right=8pt] {$p_6$};
\fill (2.30,1.65) circle (2pt) node[right=8pt] {$p_7$};

\draw (1.9,2.30) circle (0.12);
\fill (1.9,2.30) circle (2pt);

\node[above left=2pt] at (1.77,2.48) {$z=0$};
\node[above right=2pt] at (2.03,2.48) {$x=0$};

\node at (3.05,2.25) {$p_8 \leftarrow p_9$};
\end{tikzpicture}
\end{center}

Let $\sigma\in S_9$ be the cycle $\begin{pmatrix}1 & 8 & 7 & 6 & 5 & 4 & 3 & 2 \end{pmatrix}$. If we set $k\coloneqq 2$ and define the maps $\lambda$ and $\nu$ as in (i), we obtain the factorisation $\mu=\nu\circ\lambda$, and (i) and (iii) hold.

Thus, $Y$ is the blowup of $\mathbb P^2$ at $p_{\sigma(1)}=p_8$. Let $E_1$ be the $\nu$-exceptional divisor and set $C_Y\coloneqq \nu^*C - E_1$. Then $C_Y\in |{-}K_Y|$ is a cycle of four rational curves. For $i\in\{2,\ldots,9\}$, let $E_i$ be the exceptional divisor on $X$ over $p_{\sigma(i)}$. Then ${-}K_X\sim {-}\lambda^*K_Y-\sum_{i=2}^9 E_i\sim \lambda^*C_Y-\sum_{i=2}^9 E_i$. Then the unique divisor $D\in |{-}K_X|$ is precisely the strict transform of $C_Y$, hence (ii) is satisfied.

The remaining cases in (b) are analogous. Here we only note the numbers $k$, the permutations $\sigma\in S_9$ and the formula for the divisor $\lambda_*D$ in each case.

If $R=A_4^\smone$, then $X$ is obtained by blowing up points on the cycle $C$ of three lines $\{x=0\}$, $\{y=0\}$ and $\{z=0\}$ in $\mathbb P^2$. Then $k=3$, $\sigma=\begin{pmatrix}1 & 7 & 5 & 3\end{pmatrix}\begin{pmatrix}2 & 8 & 6 & 4\end{pmatrix}$ and $\lambda_*D=\nu^*C - E_1-E_2$, where $E_1$ and $E_2$ are the $\nu$-exceptional divisors.

If $R=A_5^\smone$, then $X$ is obtained by blowing up points on the cycle $C$ of three lines $\{x=0\}$, $\{y=0\}$ and $\{z=0\}$ in $\mathbb P^2$. Then $k=4$, $\sigma=\begin{pmatrix}1 & 3 & 8 & 6 & 4\end{pmatrix}\begin{pmatrix}2 & 7 & 5 \end{pmatrix}$ and $\lambda_*D=\nu^*C - E_1-E_2-E_3$, where $E_1$, $E_2$ and $E_3$ are the $\nu$-exceptional divisors.

If $R=A_6^\smone$, then $X$ is obtained by blowing up points on the cycle $C$ of three lines $\{x=0\}$, $\{y=0\}$ and $\{z=0\}$ in $\mathbb P^2$. Then $k=5$, $\sigma=\begin{pmatrix}1 & 2 & 3 & 7 & 5\end{pmatrix}\begin{pmatrix}4 & 8 & 6 \end{pmatrix}$ and $\lambda_*D=\nu^*C - E_1-E_2-E_3-E_4$, where $E_1$, $E_2$, $E_3$ and $E_4$ are the $\nu$-exceptional divisors.

If $R=A_7^\smone$, then $X$ is obtained by blowing up points on the cycle $C$ of three lines $\{x=0\}$, $\{y=0\}$ and $\{z=0\}$ in $\mathbb P^2$. Then $k=6$, $\sigma=\begin{pmatrix}1 & 2 & 3 & 4 & 7 & 5 & 8 & 6\end{pmatrix}$ and $\lambda_*D=\nu^*C - E_1-E_2-E_3-E_4-E_5$, where $E_1$, $E_2$, $E_3$, $E_4$ and $E_5$ are the $\nu$-exceptional divisors.

If $R={A_7^\smone}'$, then $X$ is obtained by blowing up points on the cycle $C$ of three lines $\{x=0\}$, $\{y=0\}$ and $\{z=0\}$ in $\mathbb P^2$. Then $k=6$, $\sigma=\begin{pmatrix}2 & 3 \end{pmatrix}\begin{pmatrix}4 & 7 & 5 & 8 & 6\end{pmatrix}$ and $\lambda_*D=\nu^*C - E_1-E_2-E_3-E_4-E_5$, where $E_1$, $E_2$, $E_3$, $E_4$ and $E_5$ are the $\nu$-exceptional divisors.

If $R=A_8^\smone$, then $X$ is obtained by blowing up points on the cycle $C$ of three lines $\{x=0\}$, $\{y=0\}$ and $\{z=0\}$ in $\mathbb P^2$. Then $k=7$, $\sigma=\begin{pmatrix}5 & 7 \end{pmatrix}\begin{pmatrix}6 & 8\end{pmatrix}$ and $\lambda_*D=\nu^*C - E_1-E_2-E_3-E_4-E_5-E_6$, where $E_1$, $E_2$, $E_3$, $E_4$, $E_5$ and $E_6$ are the $\nu$-exceptional divisors.
\end{proof}

Now we can show the existence of Sakai surface of type $R$, for every type $R$.

\begin{lem}\label{lem:atleastoneSakai}
    Let $R$ be an affine type.
	\begin{enumerate}[\normalfont (a)]
	\item If $R$ is an elliptic type or a multiplicative type, then there exists a Sakai surface of type $R$.
	\item If $R$ is an additive type, then every $R$-surface of type $R$ is a Sakai surface.
	\end{enumerate}
\end{lem}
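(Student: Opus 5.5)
Part (b) is immediate from Lemma \ref{lem:char_of_indecomp}(d). If $X$ is an $R$-surface with $R$ of additive type, then that lemma says $X$ is either a Sakai surface or a Halphen surface of index $m>1$ of elliptic or multiplicative type. The second option contradicts the additivity of $R$, so $X$ is a Sakai surface. Every $R$-surface in (b) is therefore Sakai, and existence of $R$-surfaces of each additive type is provided by \cite[Appendix B]{Sak01}.

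For (a), fix an elliptic or multiplicative type $R$ and use the parametrisation of $R$-surfaces from \cite[Appendix B]{Sak01} together with Lemma \ref{lem:mult_types}. By that lemma there is a fixed surface $Y$, namely $\mathbb P^2$ when $k=1$ as in case (a) of Lemma \ref{lem:mult_types}, together with a fixed curve $C_Y\coloneqq\lambda_*D\in|{-}K_Y|$. Every $R$-surface $X$ is obtained by blowing up $9-k+1$ points on the smooth locus of $C_Y$ (possibly infinitely near, but still on the strict transform of $C_Y$), and $D$ is the strict transform of $C_Y$. Since $D\in|{-}K_X|$ is of canonical type, Lemma \ref{lem:char_of_indecomp}(c)(d) shows that $X$ is Sakai unless $\kappa(X,{-}K_X)=1$, that is, unless $h^0(X,{-}mK_X)\geq2$ for some $m$. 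The restriction $\OO_X({-}K_X)|_D$ is a degree-zero line bundle on $D$: it is numerically trivial on each component because $D$ is of canonical type. The standard criterion (\cite{Sak01}, or the Halphen-type argument via the exact sequence $0\to\OO_X({-}(m-1)K_X)\to\OO_X({-}mK_X)\to\OO_D({-}mK_X)\to0$) then says the following. If $\OO_D({-}K_X)$ is non-torsion in $\Pic^0(D)$, then $h^0(X,{-}mK_X)=1$ for all $m\geq1$. Indeed, by induction $h^0$ cannot grow, since $H^0(D,\OO_D({-}mK_X))=0$ for a non-trivial degree-zero line bundle on a curve of canonical type, which is connected with $h^0(\OO_D)=1$. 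Hence $\kappa(X,{-}K_X)=0$.

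It then suffices to choose the points so that $\OO_D({-}K_X)$ is non-torsion. Identifying $D\simeq C_Y$ via $\lambda$, we have
$$\OO_D({-}K_X)\simeq\OO_{C_Y}({-}K_Y)\otimes\OO_{C_Y}\Big({-}\sum_{i\geq k}p_{\sigma(i)}\Big),$$
where the sum is over points in the smooth locus of $C_Y$. The group $\Pic^0(C_Y)$ is an elliptic curve, $\C^*$ or $\C$ in the respective types. As the points move freely along the smooth locus of $C_Y$ (with the component distribution prescribed by the type, which fixes multidegrees), this class sweeps out a translate of the whole of $\Pic^0(C_Y)$ via the Abel--Jacobi map. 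This class is the period map of Sakai's parameters. Since torsion points form a countable set, a choice of points avoiding them exists. The main obstacle is checking, type by type, that such a choice still yields an $R$-surface of exactly type $R$, i.e.\ that no new components appear in $|{-}K_X|$ and that the configuration of \cite[Appendix B]{Sak01} remains valid. I would handle this by noting that the conditions in \cite[Appendix B]{Sak01} are open ($a_i\neq0$ and distinctness), so they cut out a non-empty Zariski open set of parameters, which meets the complement of the countable torsion locus.
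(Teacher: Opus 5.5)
Your proposal is correct and follows essentially the paper's route. You reduce via Lemma~\ref{lem:mult_types} to the fixed pair $(Y,C_Y)$, identify $\OO_D(-K_X)$ with $\OO_{C_Y}(C_Y)\otimes\OO_{C_Y}\big(-\sum_{i\geq k} q_i\big)$, and move one point along the smooth locus of its component to make this class non-torsion. The difference is one of packaging: you prove the implication ``non-torsion $\Rightarrow$ $h^0(X,-mK_X)=1$ for all $m$'' directly via the restriction sequence, which is valid here because $D$ is reduced in the elliptic and multiplicative types. The paper instead argues by contradiction, quoting \cite[Proposition 2.2]{CD12} for the torsion of $\OO_D(D)$ on Halphen surfaces and \cite[Lemma 1.6 and Corollary 1.8]{Fri15} for the fact that moving one point sweeps out $\Pic^0(C_Y)$.
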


\begin{proof}
Part (b) follows from Lemma \ref{lem:char_of_indecomp}(d). In the remainder of the proof we prove (a).

Up to a change of coordinates, each $R$-surface $X$  is obtained by blowing up points $p_1,\dots,p_9$ on an elliptic curve or a nodal curve $C$ or a cycle of rational curves $C$ in $\mathbb P^2$, depending on which type $R$ is. We will show that there exists a configuration of these points such that $X$ is a Sakai surface.

Assume, for contradiction, that for any configuration of $p_1,\dots,p_9$, the resulting $R$-surface $X$ is not a Sakai surface. Then by Lemma \ref{lem:char_of_indecomp}(d), each such a surface $X$ is a Halphen surface, and let $D\in|{-}K_X|$.

 Consider the map $\lambda\colon X\to Y$ as in Lemma \ref{lem:mult_types}, which is obtained as a sequence of blowups of points $p_{\sigma(k)},\ldots, p_{\sigma(9)}$ for some $1\leq k\leq 7$ and some $\sigma\in S_9$. Let $C_Y\coloneqq \lambda_* D$, and for $i=k,\dots,9$, let $q_i$ be the image of $p_{\sigma(i)}$ on $Y$. Then $C_Y$ does not depend on the configuration of the points $p_1,\dots,p_9$ by Lemma \ref{lem:mult_types}.

For $i\in\{k,\ldots,9\}$, let $E_i$ be the exceptional divisor on $X$ over $q_i$. Then $\textstyle D\sim{-}K_X\sim {-}\lambda^*K_Y-\sum_{i=k}^9 E_i\sim \lambda^*C_Y-\sum_{i=k}^9 E_i$, hence
\begin{align*}
\OO_D(D)&\simeq \textstyle \OO_D(\lambda^*C_Y)\otimes\OO_D\big({-}\sum_{i=k}^9 E_i\big)\\
&\simeq \textstyle (\lambda|_D)^*\OO_{C_Y}(C_Y)\otimes\OO_D\big({-}\sum_{i=k}^9 E_i\big).
\end{align*} 
By Lemma \ref{lem:mult_types}, the map $\lambda|_D\colon D\to C_Y$ is an isomorphism and the points $q_i$ lie in the smooth locus of $C_Y$. This implies that $\OO_D\big({-}\sum_{i=k}^9 E_i\big)=(\lambda|_D)^*\OO_{C_Y}\big({-}\sum_{i=k}^9 q_i\big)$, which together with the relation above gives
$$\OO_D(D)\simeq \textstyle (\lambda|_D)^*\big(\OO_{C_Y}(C_Y)\otimes \OO_{C_Y}({-}\sum_{i=k}^9 q_i)\big).$$
By \cite[Proposition 2.2]{CD12} we have that $\OO_D(D)$ is torsion in $\Pic^0(D)$. Therefore, by the previous relation and since $\lambda|_D$ is an isomorphism, we conclude that $\OO_{C_Y}(C_Y)\otimes \OO_{C_Y}\big({-}\sum_{i=k}^9 q_i\big)$ is torsion in $\Pic^0(C_Y)$ for every choice of the points $q_k,\dots,q_9$ in the smooth locus of $C_Y$. Moreover, for each $i=k,\dots,9$ there exists a unique component $C_i$ of $C_Y$ such that $q_i\in C_i$, where $C_i = C_j$ is possible if $i \neq j$. 

Now, for each $i=k,\dots,9$, pick general points $q_i\in C_i$. We claim that there exists a point $q_9'\in C_9$ such that $q_9'$ is a smooth point of $C_Y$ and $\OO_{C_Y}(q_9'-q_9)$ is not torsion. Indeed, if $R=A_0^\smone$, then the claim holds since in this case $Y=\mathbb P^2$, $C_Y=C$, and $C$ is a smooth elliptic curve. If $R\neq A_0^\smone$, then $\Pic^0(C_Y)\simeq\C^*$ by \cite[Lemma 1.6]{Fri15}, hence there exists a line bundle $L\in\Pic^0(C_Y)$ which is not torsion. Then by \cite[Corollary 1.8]{Fri15} there exists a point $q_9'\in C_9$ such that $q_9'$ is a smooth point of $C_Y$ and $\OO_{C_Y}(q_9'-q_9)\simeq L$, which proves the claim. 

Now the claim implies that the line bundles $\OO_{C_Y}(C_Y)\otimes \OO_{C_Y}\big({-}\sum_{i=k}^9 q_i\big)$  and $\OO_{C_Y}(C_Y)\otimes \OO_{C_Y}\big({-}\sum_{i=k}^8 q_i-q_9'\big)$ cannot simultaneously be torsion in $\Pic^0(C_Y)$. This is a contradiction which finishes the proof of the lemma.
\end{proof}
 
We are now ready to prove the following main result of this appendix.

	\begin{prop}\label{prop:noNodalOnGen1}
	Fix a type $R$ of a Sakai surface. There exist an analytic variety $S$, an analytic family $\pi\colon \mathcal X\to S$ and subsets $S_\mathrm{Sak}$ and $S_\mathrm{genSak}$ of $S$ whose complements in $S$ are countable unions of closed analytic subvarieties, such that the following holds.
\begin{enumerate}[\normalfont (a)]
\item Every fibre of $\pi$ is an $R$-surface.
\item Every fibre of $\pi$ over $S_\mathrm{Sak}$ is a Sakai surface of type $R$.
\item Every fibre of $\pi$ over $S_\mathrm{genSak}$ is a generic Sakai surface of type $R$.
\end{enumerate}
	\end{prop}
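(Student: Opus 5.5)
The plan is to use Sakai's explicit parametrisation of $R$-surfaces from \cite[Section 5 and Appendix B]{Sak01}. By Lemma \ref{lem:mult_types} (and the analogous explicit configurations for the additive types), every $R$-surface of type $R$ arises by blowing up nine points $p_1,\dots,p_9$ on a fixed anticanonical curve, either directly in $\mathbb P^2$ or on the fixed surface $Y$ with fixed curve $C_Y$. These points depend on complex parameters $a=(a_1,\dots,a_n)$ ranging over a Zariski open set $S$ of an affine space, for instance $a_i\neq0$ in the $A_3^\smone$ case. Blowing up these points in families yields an analytic (indeed algebraic) family $\pi\colon\mathcal X\to S$ whose fibres are $R$-surfaces, which gives (a). On $\mathcal X$ the relative anticanonical divisor $\mathcal D$, namely the strict transform of the fixed curve, restricts to the unique $D_s\in|{-}K_{X_s}|$. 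After a finite étale base change if necessary, we may also fix a global marking $N^1(X_s)\simeq\Z^{1,9}$ by $\mathcal H,\mathcal E_1,\dots,\mathcal E_9$.

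For (b) I would use the characterisation of Lemma \ref{lem:char_of_indecomp}. The fibre $X_s$ fails to be Sakai exactly when $\kappa(X_s,{-}K_{X_s})=1$, i.e.\ when it is a Halphen surface. Additive types are always Sakai by Lemma \ref{lem:atleastoneSakai}(b), so there is nothing to prove for them. For elliptic and multiplicative types, the proof of Lemma \ref{lem:atleastoneSakai} shows that $X_s$ is Halphen of index $m$ if and only if $\OO_{C_Y}(C_Y)\otimes\OO_{C_Y}(-\sum q_i)$ is $m$-torsion in $\Pic^0(C_Y)$. Here we use \cite[Proposition 2.2]{CD12} in one direction; for the converse, $m$-torsion of $\OO_D(D)$ gives $|{-}mK_X|$ a pencil by the standard Halphen argument. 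The map $s\mapsto \OO_{C_Y}(C_Y)(-\sum q_i(s))\in\Pic^0(C_Y)$ is holomorphic, so for each $m$ the locus $Z_m$ where this bundle is $m$-torsion is a closed analytic subvariety. By Lemma \ref{lem:atleastoneSakai}(a), applied via the argument that moves a single $q_9$, $Z_m\neq S$. Setting $S_{\mathrm{Sak}}\coloneqq S\setminus\bigcup_m Z_m$ proves (b).

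For (c), note that $\Delta^{\nod}_{X_s,D_s}$ consists of classes $\alpha$ in the fixed lattice with $\alpha^2=-2$, $\alpha\cdot K=0$, $\alpha\cdot[D_{s,j}]=0$ for all components, and $\alpha$ effective. This is a countable set of candidate classes, independent of $s$. For each candidate $\alpha$, let $W_\alpha\subseteq S$ be the locus where $\alpha$ is effective. By upper semicontinuity of $h^0$ for the line bundle $\mathcal L_\alpha$ on $\mathcal X$, $W_\alpha$ is a closed analytic subvariety. It remains to show that $W_\alpha\neq S$, equivalently that for each type $R$ there is a Sakai fibre with no internal $(-2)$-curves, i.e.\ a generic Sakai surface.

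This last point is the main obstacle. I would attack it through the period map. On a Sakai surface with $\alpha\perp$ all components of $D$, the class $\alpha$ is effective only if $\mathcal L_\alpha|_{D}$ is trivial in $\Pic^0(D)$, since an effective representative would be disjoint from $D$. Through the marking, $\mathcal L_\alpha|_D$ is a nonconstant holomorphic function of $s$, computed as a product of differences $q_i-q_j$ or analogous expressions in $\Pic^0(C_Y)\simeq\C^*$, $\C$ or an elliptic curve. Indeed $\alpha$ is a nonzero element of the root lattice $Q(E_{9-r})$ orthogonal to $D$, and the period map is surjective onto $\operatorname{Hom}(\alpha^{\perp}\text{-lattice},\Pic^0(D))$ by Sakai's results (cf.\ \cite[Section 5]{Sak01}, \cite{Fri15}). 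Hence $\{\mathcal L_\alpha|_D\simeq\OO_D\}$ is a proper closed analytic subvariety containing $W_\alpha\cap S_{\mathrm{Sak}}$. Setting $S_{\mathrm{genSak}}\coloneqq S_{\mathrm{Sak}}\setminus\bigcup_\alpha W_\alpha$ then gives (c).
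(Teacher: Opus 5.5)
Your overall strategy is the paper's. You parametrise $R$-surfaces by Sakai's families and obtain $S_{\mathrm{Sak}}$ by removing countably many closed loci. You use the torsion loci $Z_m$ of $\OO_D(D)$; the paper uses openness of $\{h^0(-mK)=1\}$ via Grauert. Both work. Finally you discard the loci where the period of a candidate $(-2)$-class vanishes. The paper does this last step directly: $S_{\mathrm{genSak}}$ is the complement in $S_{\mathrm{Sak}}$ of the zero loci $\{\chi(\gamma)=0\}$ for $0\neq\gamma\in Q(R^\smperp)$. These loci are visibly proper from Sakai's explicit formulas for $\chi(\alpha_i)$ in the parameters. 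The only geometric input is that an internal $(-2)$-curve $C$ is disjoint from $D$, so $\chi([C])=0$.

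You instead remove the effectivity loci $W_\alpha$ and use the period map only to show they are proper. Here one step is false: an effective divisor $G$ in a candidate class $\alpha$ need not be disjoint from $D$. Write $G=G_0+G_D$, with $G_D$ the part supported on $D$. Since $G_D\cdot D=0$ and $\alpha\cdot D=0$, you get $G_0\cdot D=0$, hence $G_0\cdot D_j=0$ for every component $D_j$, hence $G_D\cdot D_j=0$ for all $j$. So $G_D$ lies in the radical of the intersection form on the components of $D$, i.e.\ $G_D=cD$ with $c\in\Z_{\geq0}$. Then $\mathcal L_\alpha|_D\simeq\OO_D(D)^{\otimes c}$, which is non-trivial on a Sakai surface when $c>0$. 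Concretely, take a non-generic Sakai surface with an internal $(-2)$-curve $G_0$. The class $\alpha=[G_0]+[D]$ is a candidate and is effective, but $\mathcal L_\alpha|_D\simeq\OO_D(D)$ is non-torsion. So the containment $W_\alpha\cap S_{\mathrm{Sak}}\subseteq\{\mathcal L_\alpha|_D\simeq\OO_D\}$ that you assert fails.

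The repair is easy, by either of two routes.
\begin{enumerate}
\item Prove $W_\alpha\cap S_{\mathrm{Sak}}\subseteq\bigcup_{c\geq0}\{\mathcal L_{\alpha+cK}|_D\simeq\OO_D\}$ and deduce $W_\alpha\neq S$ by a Baire-category argument.
\item More simply, as in the paper, define $S_{\mathrm{genSak}}$ as the complement in $S_{\mathrm{Sak}}$ of $\bigcup_{0\neq\alpha\in Q(R^\smperp)}\{\mathcal L_\alpha|_D\simeq\OO_D\}$. This needs no semicontinuity argument at all.
\end{enumerate}
Also, the target of the period map should be $\operatorname{Hom}\big(Q(R^\smperp),\Pic^0(D)\big)$, not an ``$\alpha^\perp$-lattice''. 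Properness of each vanishing locus can be read off Sakai's explicit formulas rather than quoted as an abstract surjectivity statement.
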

		
\begin{proof}
Throughout the proof we use without explicit mention that for a Sakai surface of type $R$ such that $D\in|{-}K_X|$, we have $\Delta^{\nod}_{X,D}\subseteq Q(R^\smperp)$, which is immediate from the definition of $\Delta^{\nod}_{X,D}$. In particular, every element of $\Delta^{\nod}_{X,D}$ can be written as an integral linear combination of the elements in $\Pi(R^\smperp)$. We denote by $D_{\operatorname{red}}$ the support of $D$.

\medskip

\emph{Step 1.}
We will use the period map
$$\chi\colon Q(R^\smperp)\to \C\mod \widehat \chi\big(H^1(D_{\operatorname{red}},\Z)\big)$$
constructed in \cite[pp.\ 187--188]{Sak01}, and normalised as in \cite[p.\ 188]{Sak01}. We suppress $\widehat \chi\big(H^1(D_{\operatorname{red}},\Z)\big)$ from the notation below, but we use implicitly that it is a lattice in $\C$, and in particular countable as a set.

If $C$ is an internal $(-2)$-curve on a Sakai surface $X$ of type $R$, we consider
$$\gamma\coloneqq [C]\in \Delta^{\nod}_{X,D}.$$
Since $C\subseteq X\setminus D_{\operatorname{red}}$ and the $2$-form $\omega$ used to define $\widehat \chi$ is holomorphic on $X\setminus D_{\operatorname{red}}$, we have
\begin{equation}\label{eq:chigamma1}
\chi(\gamma)=0.
\end{equation}

\medskip

\emph{Step 2.}
In this step we prove the proposition when $R=A_0^\smone$; when $R=A_0^\smonestar$, the proof is analogous.

We have $\Pi(R^\smperp)=\{\alpha_0,\dots,\alpha_8\}$. By \cite[Appendix B]{Sak01} we have
$$\chi(\alpha_0) = \theta_9 - \theta_8,\ \chi(\alpha_i) = \theta_{i+1}-\theta_i \text{ for }i = 1, \dots , 7,\  \chi(\alpha_8) = \theta_1 + \theta_2 + \theta_3,$$
where $\theta_i$ are complex parameters. Set $S\coloneqq \Spec\C[\theta_0,\dots,\theta_8]$, where we now view the parameters $\theta_i$ as variables over $\C$. Then the existence of the map $\pi\colon\mathcal X\to S$ such that (a) holds follows from the constructions in \cite[Section 5 and Appendix B]{Sak01}. Let $S^\circ\subseteq S$ be a non-empty open set such that the map $\pi|_{\pi^{-1}(S^\circ)}$ is flat.

For each positive integer $m$, let $S_m\subseteq S^\circ$ be the locus of points $s$ such that $h^0(\mathcal X_s,{-}mK_{\mathcal X_s})=1$, where $\mathcal X_s\coloneqq \pi^{-1}(s)$. Then $S_m\neq\emptyset$ by Lemma \ref{lem:atleastoneSakai}, and $S_m$ is open by Grauert's upper semicontinuity of cohomology. Setting $S_\mathrm{Sak}\coloneqq \bigcap_{m\geq1}S_m$, we have that (b) holds by Lemma \ref{lem:char_of_indecomp}(c).

For $\gamma$ as in Step 1, write $\gamma=m_0\alpha_0+\dots+m_8\alpha_8$, where $m_i\in\Z$. Then \eqref{eq:chigamma1} and the formulas above give
$$ \textstyle 0=\chi(\gamma)=\sum_{i=1}^7m_i(\theta_{i+1}-\theta_i)+m_8(\theta_1+\theta_2+\theta_3)+m_0(\theta_9-\theta_8). $$
Then for each $(m_0,\dots,m_8)\in\Z^9$, the set of solutions $(\theta_1,\dots,\theta_9)\in\C^9$ of the equation above is the union of countably many linear subspaces.\footnote{Recall that in the notation we suppress the lattice $\widehat \chi\big(H^1(D_{\operatorname{red}},\Z)\big)$.} Therefore, if $S_\mathrm{genSak}$ is the complement in $S_\mathrm{Sak}$ of that set of solutions, the corresponding surfaces $\pi^{-1}(s)$ for $s\in S_\mathrm{genSak}$ are Sakai surfaces without internal $(-2)$-curves. This shows (c).

\medskip

\emph{Step 3.}
In this step we prove the proposition when $R=A_1^\smone$; when $R\in\{A_2^\smone, A_3^\smone, A_4^\smone, A_7^\smone, {A_7^\smone}'\} $, the proof is analogous.

We have $\Pi(R^\smperp)=\{\alpha_0,\dots,\alpha_7\}$. By \cite[Appendix B]{Sak01} we have
\begin{align*}
e^{2\pi\sqrt{-1}\chi(\alpha_0)} &= a_8/a_9,  &e^{2\pi\sqrt{-1}\chi(\alpha_1)} = a_3/a_2, && e^{2\pi\sqrt{-1}\chi(\alpha_2)} &= a_2/a_1, \\
e^{2\pi\sqrt{-1}\chi(\alpha_3)} &= {-}a_1/a_4a_5,  &e^{2\pi\sqrt{-1}\chi(\alpha_4)} = a_5/a_6, && e^{2\pi\sqrt{-1}\chi(\alpha_5)} &= a_6/a_7,\\
e^{2\pi\sqrt{-1}\chi(\alpha_6)} &= a_7/a_8,  &e^{2\pi\sqrt{-1}\chi(\alpha_7)} = a_4/a_5, && &
\end{align*}
where $a_i$ are complex parameters. Set $S\coloneqq \Spec\C[a_1^{\pm1}, \dots , a_9^{\pm1}]$, where we now view the parameters $a_i$ as variables over $\C$. Then (a) and (b) follow as in Step 2.

For $\gamma$ as in Step 1, write $\gamma=m_0\alpha_0+\dots+m_7\alpha_7$, where $m_i\in\Z$. Then \eqref{eq:chigamma1} and the formulas above give
\begin{align*}
1=e^{2\pi\sqrt{-1}\chi(\gamma)}&=\frac{(-1)^{m_3}}{a_9^{m_0}}a_1^{m_3-m_2}a_2^{m_2-m_1}a_3^{m_1}a_4^{m_7-m_3}\\
&\times a_5^{m_4-m_3-m_7}a_6^{m_5-m_4}a_7^{m_6-m_5}a_8^{m_0-m_6}.
\end{align*}
Then it is clear that the set of solutions $(a_1,\dots,a_9)\in S$ of the equation above is the union of countably many analytic subsets. We conclude as in Step 2.

\medskip

\emph{Step 4.}
In this step we prove the proposition when $R=A_0^\smonestarstar$; when $R\in\{A_1^\smonestar, A_2^\smonestar\}$, the proof is analogous.

We have $\Pi(R^\smperp)=\{\alpha_0,\dots,\alpha_8\}$. By \cite[Appendix B]{Sak01} we have
$$\textstyle \chi(\alpha_0) = \frac{a_9 - a_8}{\lambda},\quad \chi(\alpha_i) = \frac{a_{i+1}-a_i}{\lambda} \text{ for }i = 1, \dots , 7,\quad  \chi(\alpha_8) = \frac{a_1 + a_2 + a_3}{\lambda},$$
where $a_i$ are complex parameters and we set $\lambda \coloneqq a_1+\dots+a_9$. Set $ S\coloneqq \Spec\C[a_1,\dots,a_9,\lambda^{-1}]$, where we now view the parameters $a_i$ as variables over $\C$. Then (a) and (b) follow as in Step 2.

For $\gamma$ as in Step 1, write $\gamma=m_0\alpha_0+\dots+m_8\alpha_8$, where $m_i\in\Z$. Then \eqref{eq:chigamma1} and the formulas above give
$$ \textstyle 0=\chi(\gamma)=\frac{1}{\lambda}\big(\sum_{i=1}^7m_i(a_{i+1}-a_i)+m_8(a_1+a_2+a_3)+m_0(a_9-a_8)\big).$$
Then it is clear that the set of solutions $(a_1,\dots,a_9)\in S$ of the equation above is the union of countably many linear subspaces. We conclude as in Step 2.

\medskip

\emph{Step 5.} 
In this step we prove the proposition when $R=D_4^\smone$. When $R\in\{D_5^\smone, D_7^\smone, E_6^\smone, E_7^\smone\}$, the proof is analogous.

We have $\Pi(R^\smperp)=\{\alpha_0,\dots,\alpha_4\}$. By \cite[Appendix B]{Sak01} we have
$$\textstyle \chi(\alpha_i) = \frac{a_i}{\lambda} \quad \text{for }i = 0, \dots , 4,$$
where $a_i$ are complex parameters and we set $\lambda \coloneqq a_0+\dots+a_4$. Set $ S\coloneqq \Spec\C[a_0,\dots,a_4,s,\lambda^{-1}]$, where we now view the parameters $a_i$ and $s$ as variables over $\C$. Then (a) and (b) follow as in Step 2.

For $\gamma$ as in Step 1, write $\gamma=m_0\alpha_0+\dots+m_4\alpha_4$, where $m_i\in\Z$. Then \eqref{eq:chigamma1} and the formulas above give
$$ \textstyle 0=\chi(\gamma)=\frac{1}{\lambda}\sum_{i=0}^4 m_ia_i. $$
Then it is clear that the set of solutions $(a_0,\dots,a_4,s)\in S$ of the equation above is the union of countably many linear subspaces. We conclude as in Step 2.

\medskip

\emph{Step 6.}
In this step we prove the proposition when $R=A_5^\smone$; when $R\in\{A_6^\smone, D_6^\smone\} $, the proof is analogous.

We have $\Pi(R^\smperp)=\{\alpha_0,\alpha_1,\alpha_2,\alpha_0',\alpha_1'\}$. By \cite[Appendix B]{Sak01} we have
\begin{align*}
e^{2\pi\sqrt{-1}\chi(\alpha_0)} &= b_0b_1/a_1a_2,  &e^{2\pi\sqrt{-1}\chi(\alpha_1)} = a_1, && e^{2\pi\sqrt{-1}\chi(\alpha_2)} &= a_2,\\
e^{2\pi\sqrt{-1}\chi(\alpha_0')} &= b_0,  &e^{2\pi\sqrt{-1}\chi(\alpha_1')} = b_1, && &
\end{align*}
where $a_i$ and $b_i$ are complex parameters. Set $S\coloneqq \Spec\C[a_1^{\pm1}, a_2^{\pm1}, b_0^{\pm1}, b_1^{\pm1}]$, where we now view the parameters $a_i$ and $b_i$ as variables over $\C$. Then (a) and (b) follow as in Step 2.

Write $\gamma=m_0\alpha_0+m_1\alpha_1+m_2\alpha_2+m_0'\alpha_0'+m_1'\alpha_1'$, where $m_i,m_j'\in\Z$. Then \eqref{eq:chigamma1} and the formulas above give
$$ 1=e^{2\pi\sqrt{-1}\chi(\gamma)}=a_1^{m_1-m_0}a_2^{m_2-m_0}b_0^{m_0+m_0'}b_1^{m_0+m_1'}. $$
Then it is clear that the set of solutions $(a_1,a_2,b_0,b_1)\in S$ of the equation above is the union of countably many analytic subsets. We conclude as in Step 2.
\end{proof}

	\bibliographystyle{amsalpha}
	\bibliography{biblio}

\newcommand{\etalchar}[1]{$^{#1}$}
\providecommand{\bysame}{\leavevmode\hbox to3em{\hrulefill}\thinspace}
\providecommand{\MR}{\relax\ifhmode\unskip\space\fi MR }
\providecommand{\MRhref}[2]{%
  \href{http://www.ams.org/mathscinet-getitem?mr=#1}{#2}
}
\providecommand{\href}[2]{#2}
\begin{thebibliography}{BCHM10}

\bibitem[AL11]{AL11}
M.~Artebani and A.~Laface, \emph{Cox rings of surfaces and the anticanonical
  {I}itaka dimension}, Adv. Math \textbf{226} (2011), no.~6, 5252--5267.

\bibitem[AM04]{AM04}
V.~Alexeev and S.~Mori, \emph{Bounding singular surfaces of general type},
  Algebra, arithmetic and geometry with applications ({W}est {L}afayette, {IN},
  2000), Springer, Berlin, 2004, pp.~143--174.

\bibitem[BCHM10]{BCHM}
C.~Birkar, P.~Cascini, C.~D. Hacon, and J.~M\textsuperscript{c}Kernan,
  \emph{Existence of minimal models for varieties of log general type}, J.
  Amer. Math. Soc. \textbf{23} (2010), no.~2, 405--468.

\bibitem[BH14]{BH14}
C.~Birkar and Z.~Hu, \emph{Polarized pairs, log minimal models, and {Z}ariski
  decompositions}, Nagoya Math. J. \textbf{215} (2014), 203--224.

\bibitem[Bor91]{Bor91}
C.~Borcea, \emph{On desingularized {H}orrocks-{M}umford quintics}, J. reine
  angew. Math. \textbf{421} (1991), 23--41.

\bibitem[Bou04]{Bou04}
S.~Boucksom, \emph{Divisorial {Z}ariski decompositions on compact complex
  manifolds}, Ann. Sci. \'Ecole Norm. Sup. (4) \textbf{37} (2004), no.~1,
  45--76.

\bibitem[BZ16]{BZ16}
C.~Birkar and D.-Q. Zhang, \emph{Effectivity of {I}itaka fibrations and
  pluricanonical systems of polarized pairs}, Publ. Math. Inst. Hautes \'Etudes
  Sci. \textbf{123} (2016), 283--331.

\bibitem[CD89]{CD89}
F.~R. Cossec and I.~V. Dolgachev, \emph{Enriques surfaces. {I}}, Progress in
  Mathematics, vol.~76, Birkh\"auser Boston, Inc., Boston, MA, 1989.

\bibitem[CD12]{CD12}
S.~Cantat and I.~Dolgachev, \emph{Rational surfaces with a large group of
  automorphisms}, {J. Amer. Math. Soc.} \textbf{25} (2012), no.~3, 863–905.

\bibitem[Deb01]{Deb01}
O.~Debarre, \emph{Higher-dimensional algebraic geometry}, Universitext,
  Springer-Verlag, New York, 2001.

\bibitem[Dol83]{Dol83}
I.~V. Dolgachev, \emph{Weyl groups and {C}remona transformations},
  Singularities, {P}art 1 ({A}rcata, {C}alif., 1981), Proc. Sympos. Pure Math.,
  vol.~40, Amer. Math. Soc., Providence, RI, 1983, pp.~283--294.

\bibitem[DPS94]{DPS94}
J.-P. Demailly, Th. Peternell, and M.~Schneider, \emph{Compact complex
  manifolds with numerically effective tangent bundles}, J. Algebraic Geom.
  \textbf{3} (1994), no.~2, 295--345.

\bibitem[D'U24]{DU24}
L.~D'Urso, \emph{On the {N}ef cones of blowups of the projective plane},
  arXiv:2412.15460\setbox0=\hbox{2024}.

\bibitem[Fri13]{Fri13}
R.~Friedman, \emph{On the ample cone of a rational surface with an
  anticanonical cycle}, Algebra Number Theory \textbf{7} (2013), no.~6, 1481 --
  1504.

\bibitem[Fri15]{Fri15}
\bysame, \emph{On the geometry of anticanonical pairs},
  arXiv:1502.02560\setbox0=\hbox{2015}.

\bibitem[Fuj79]{Fuj79}
T.~Fujita, \emph{On {Z}ariski problem}, Proc. Japan Acad. Ser. A Math. Sci.
  \textbf{55} (1979), no.~3, 106--110.

\bibitem[Fuj17]{Fuj17}
O.~Fujino, \emph{Foundations of the minimal model program}, MSJ Memoirs,
  vol.~35, Mathematical Society of Japan, Tokyo, 2017.

\bibitem[Gac25]{Gac25}
C.~Gachet, \emph{{The pseudoautomorphism group of $\mathbb{P}^3$ blown-up at
  $8$ very general points}}, arXiv:2509.12977\setbox0=\hbox{2025}.

\bibitem[GHK15]{GHK15}
M.~Gross, P.~Hacking, and S.~Keel, \emph{Moduli of surfaces with an
  anti-canonical cycle}, Compos. Math. \textbf{151} (2015), no.~2, 265–291.

\bibitem[Giz80]{Giz80}
M.~H. Gizatullin, \emph{Rational {$G$}-surfaces}, Izv. Akad. Nauk SSSR Ser.
  Mat. \textbf{44} (1980), no.~1, 110--144.

\bibitem[GKP16]{GKP16a}
D.~Greb, S.~Kebekus, and Th. Peternell, \emph{{\'E}tale fundamental groups of
  {K}awamata log terminal spaces, flat sheaves, and quotients of abelian
  varieties}, Duke Math. J. \textbf{165} (2016), no.~10, 1965--2004.

\bibitem[GLSW26]{GLSW26}
C.~Gachet, H.-Y. Lin, I.~Stenger, and L.~Wang, \emph{{The effective cone
  conjecture for Calabi--Yau pairs}}, J. reine angew. Math. (2026).

\bibitem[Har77]{Har77}
R.~Hartshorne, \emph{Algebraic geometry}, Graduate Texts in Mathematics,
  vol.~52, Springer-Verlag, New York, 1977.

\bibitem[HL20]{HanLiu20}
J.~Han and W.~Liu, \emph{On numerical nonvanishing for generalized log
  canonical pairs}, Doc. Math. \textbf{25} (2020), 93--123.

\bibitem[HL22]{HL22}
J.~Han and Z.~Li, \emph{Weak {Z}ariski decompositions and log terminal models
  for generalized pairs}, Math. Z. \textbf{302} (2022), no.~2, 707--741.

\bibitem[Kaw85]{Kaw85}
Y.~Kawamata, \emph{Pluricanonical systems on minimal algebraic varieties},
  Invent. Math. \textbf{79} (1985), 567--588.

\bibitem[Kaw97]{Kaw97}
\bysame, \emph{On the cone of divisors of {C}alabi-{Y}au fiber spaces},
  Internat.\ J.\ Math. \textbf{8} (1997), 665--687.

\bibitem[KM98]{KM98}
J.~Koll{\'a}r and S.~Mori, \emph{Birational geometry of algebraic varieties},
  Cambridge Tracts in Mathematics, vol. 134, Cambridge University Press,
  Cambridge, 1998.

\bibitem[Laz04]{Laz04}
R.~Lazarsfeld, \emph{Positivity in algebraic geometry. {I}, {II}}, Ergebnisse
  der Mathematik und ihrer Grenzgebiete, vol. 48, 49, Springer-Verlag, Berlin,
  2004.

\bibitem[Laz13]{Laz13}
V.~Lazi\'c, \emph{Around and beyond the canonical class}, Birational geometry,
  rational curves, and arithmetic, Simons Symp., Springer, Cham, 2013,
  pp.~171--203.

\bibitem[Laz26]{Laz26}
\bysame, \emph{A few remarks on effectivity and good minimal models}, Pure
  Appl. Math. Q. \textbf{22} (2026), no.~1, 221--234.

\bibitem[LH05]{LH05}
M.~Lahyane and B.~Harbourne, \emph{Irreducibility of -1-classes on
  anticanonical rational surfaces and finite generation of the effective
  monoid}, Pac. J. Math. \textbf{218} (2005), 101--114.

\bibitem[Li25]{Li25}
J.~Li, \emph{A cone conjecture for log {C}alabi-{Y}au surfaces}, Forum Math.
  Sigma \textbf{13} (2025), Paper No. e15.

\bibitem[LMP{\etalchar{+}}23]{LMPTX23}
V.~Lazi\'c, S.-i. Matsumura, Th. Peternell, N.~Tsakanikas, and Z.~Xie,
  \emph{The nonvanishing problem for varieties with nef anticanonical bundle},
  Doc. Math. \textbf{28} (2023), no.~6, 1393--1440.

\bibitem[Loo81]{Loo81}
E.~Looijenga, \emph{Rational surfaces with an anticanonical cycle}, Ann. of
  Math. \textbf{114} (1981), no.~2, 267--322.

\bibitem[Loo14]{Loo14}
\bysame, \emph{Discrete automorphism groups of convex cones of finite type},
  Compos. Math. \textbf{150} (2014), no.~11, 1939--1962.

\bibitem[LOP18]{LOP18}
V.~Lazi\'c, K.~Oguiso, and Th. Peternell, \emph{The {M}orrison-{K}awamata cone
  conjecture and abundance on {R}icci flat manifolds}, Uniformization,
  {R}iemann-{H}ilbert correspondence, {C}alabi-{Y}au manifolds \&
  {P}icard-{F}uchs equations, Adv. Lect. Math. (ALM), vol.~42, Int. Press,
  Somerville, MA, 2018, pp.~157--185.

\bibitem[LOP20]{LOP20}
\bysame, \emph{Nef line bundles on {C}alabi-{Y}au threefolds, {I}}, Int. Math.
  Res. Not. IMRN (2020), no.~19, 6070--6119.

\bibitem[LP18]{LP18a}
V.~Lazi\'c and Th. Peternell, \emph{Abundance for varieties with many
  differential forms}, \'Epijournal Geom. Alg\'ebrique \textbf{2} (2018),
  Article 1.

\bibitem[LP20a]{LP20a}
\bysame, \emph{{On Generalised Abundance, I}}, Publ. Res. Inst. Math. Sci.
  \textbf{56} (2020), no.~2, 353--389.

\bibitem[LP20b]{LP20b}
\bysame, \emph{{On Generalised Abundance, II}}, Peking Math. J. \textbf{3}
  (2020), no.~1, 1--46.

\bibitem[LSX26a]{LSX26}
V.~Lazi\'c, I.~Stenger, and Z.~Xie, \emph{{Generalised Cone Conjecture, II:
  Generic Sakai surfaces}}, arXiv preprint\setbox0=\hbox{2026}.

\bibitem[LSX26b]{LSX26a}
\bysame, \emph{{Generalised Cone Conjecture, III: Non-generic Sakai surfaces}},
  in preparation\setbox0=\hbox{2026}.

\bibitem[LX25]{LX25}
V.~Lazi\'c and Z.~Xie, \emph{Nakayama-{Z}ariski decomposition and the
  termination of flips}, \'Epijournal G\'eom. Alg\'ebrique \textbf{9} (2025),
  Article 24.

\bibitem[LZ25]{LZ25}
Z.~Li and H.~Zhao, \emph{On the relative {M}orrison-{K}awamata cone
  conjecture}, Proc. Lond. Math. Soc. (3) \textbf{131} (2025), no.~5, Paper No.
  e70099.

\bibitem[Mat02]{Mat02}
K.~Matsuki, \emph{Introduction to the {M}ori {P}rogram}, Universitext,
  Springer--Verlag, New York, 2002.

\bibitem[Mor93]{Mor93}
D.~R. Morrison, \emph{Compactifications of moduli spaces inspired by mirror
  symmetry}, no. 218, 1993, Journ\'ees de G\'eom\'etrie Alg\'ebrique d'Orsay
  (Orsay, 1992), pp.~243--271.

\bibitem[Nag61]{Nag61}
M.~Nagata, \emph{On rational surfaces. {II}}, Mem. Coll. Sci. Univ. Kyoto Ser.
  A. Math. \textbf{33} (1960/61), 271--293.

\bibitem[Nak04]{Nak04}
N.~Nakayama, \emph{Zariski-decomposition and abundance}, MSJ Memoirs, vol.~14,
  Mathematical Society of Japan, Tokyo, 2004.

\bibitem[PS12]{PS12}
A.~Prendergast-Smith, \emph{The cone conjecture for some rational elliptic
  threefolds}, Math. Z. \textbf{272} (2012), no.~1-2, 589--605.

\bibitem[Sak84]{Sak84}
F.~Sakai, \emph{Anticanonical models of rational surfaces}, Math. Ann.
  \textbf{269} (1984), no.~3, 389--410.

\bibitem[Sak01]{Sak01}
H.~Sakai, \emph{Rational surfaces associated with affine root systems and
  geometry of the {P}ainlev\'e{} equations}, Comm. Math. Phys. \textbf{220}
  (2001), no.~1, 165--229.

\bibitem[SX23]{SX23}
I.~Stenger and Z.~Xie, \emph{{Cones of divisors on $\mathbb{P}^3$ blown up at
  eight very general points}}, to appear in Algebraic Geometry,
  arXiv:2303.12005\setbox0=\hbox{2023}.

\bibitem[Tot08]{Tot08}
B.~Totaro, \emph{Hilbert's 14th problem over finite fields and a conjecture on
  the cone of curves}, Compos. Math. \textbf{144} (2008), no.~5, 1176--1198.

\bibitem[Tot10]{Tot10}
\bysame, \emph{The cone conjecture for {C}alabi-{Y}au pairs in dimension two},
  Duke Math. J. \textbf{154} (2010), 241--263.

\bibitem[Xu24]{Xu24}
F.~Xu, \emph{On the cone conjecture for certain pairs of dimension at most 4},
  arXiv:2405.20899\setbox0=\hbox{2024}.

\bibitem[Zar62]{Zar62}
O.~Zariski, \emph{The theorem of {R}iemann-{R}och for high multiples of an
  effective divisor on an algebraic surface}, Ann. of Math. \textbf{76} (1962),
  no.~3, 560--615, with an appendix by {D}avid {M}umford.

\end{thebibliography}
	
\end{document}